\documentclass[11pt,reqno]{amsart}
\usepackage[T1]{fontenc}
\usepackage{lmodern,amsmath,amssymb,amsthm,mathtools,mathrsfs,booktabs,microtype}
\usepackage[margin=1.1in]{geometry}
\usepackage{xcolor}
\usepackage[hidelinks]{hyperref}
\hypersetup{colorlinks,	linkcolor={red!50!black},citecolor={blue!50!black},	urlcolor={blue!80!black},
	pdftitle={Ramified convolution and monodromy bases for symmetrizable Kac-Moody algebras},pdfauthor={Jiarui Fei}}

\newtheorem{theorem}{Theorem}[section]
\newtheorem{proposition}[theorem]{Proposition}
\newtheorem{lemma}[theorem]{Lemma}
\newtheorem{corollary}[theorem]{Corollary}
\newtheorem{maintheorem}{Theorem}

\theoremstyle{definition}

\newtheorem{remark}[theorem]{Remark}
\newtheorem{example}[theorem]{Example}

\newcommand{\op}[1]{\operatorname{#1}}
\newcommand{\mr}[1]{\mathrm{#1}}
\newcommand{\mb}[1]{\mathbb{#1}}
\newcommand{\mc}[1]{\mathcal{#1}}
\newcommand{\mf}[1]{\mathfrak{#1}}
\newcommand{\ms}[1]{\mathscr{#1}}
\newcommand{\C}{\mb{C}}
\newcommand{\Q}{\mb{Q}}
\newcommand{\Z}{\mb{Z}}
\newcommand{\A}{\mb{A}}
\newcommand{\OO}{\mc{O}}
\newcommand{\PP}{\ms{P}}
\newcommand{\Bcal}{\mc{B}}
\newcommand{\Fl}{\op{Fl}}
\newcommand{\Gr}{\op{Gr}}
\newcommand{\Res}{\op{Res}}
\newcommand{\Tr}{\op{Tr}}
\newcommand{\rk}{\op{rank}}
\newcommand{\Hom}{\op{Hom}}
\newcommand{\End}{\op{End}}
\newcommand{\Ext}{\op{Ext}}
\newcommand{\im}{\op{im}}
\newcommand{\coker}{\op{coker}}
\newcommand{\fac}{\op{fac}}
\newcommand{\lf}{\mr{lf}}

\newcommand{\mon}{\mr{mon}}

\newcommand{\ev}{\op{ev}}
\newcommand{\lcm}{\op{lcm}}
\newcommand{\Gm}{\mb{G}_m}

\numberwithin{equation}{section}
\title[Ramified convolution and monodromy bases]{Ramified convolution and monodromy bases\newline for symmetrizable Kac--Moody algebras}
\author{Jiarui Fei}
\address{School of Mathematical Sciences, Shanghai Jiao Tong University, China}
\email{jiarui@sjtu.edu.cn}
\thanks{\hspace{-.17in}{The author was supported in part by National Natural Science Foundation of China (Nos.~12131015 and 12571038)}}
\date{}
\subjclass[2020]{Primary 17B67; Secondary 16G20, 14F20, 14D05}
\keywords{Symmetrizable Cartan matrix, preprojective species, convolution algebra, monodromy, semicanonical basis, crystal}

\begin{document}
\begin{abstract}
We realize the positive enveloping algebra of a symmetrizable Kac--Moody algebra by convolution on a ramified preprojective species over $\C((t))$. The coefficients are alternating traces of tame monodromy on representation flag varieties. We prove the relative Serre relations modulo classes induced from proper finite extensions. Monodromy evaluations at compatible generic points of the central reductions yield an integral basis respecting the top and socle crystals. The basis is unchanged by scaling the symmetrizer, induces bases of integrable representation lattices, and agrees with Lusztig's semicanonical basis when the Cartan matrix is symmetric. We prove the acyclic convolution conjecture of Gei\ss--Leclerc--Schr\"oer for all symmetrizable Cartan matrices and all positive integral symmetrizers. Finally, we give a counterexample to the Support Conjecture of Gei\ss--Leclerc--Schr\"oer and compute a ramified lift for which monodromy restores the Serre relation.
\end{abstract}

\maketitle
\tableofcontents

\section*{Introduction}\label{sec:introduction}

\subsection{Historical background}
The geometric study of the positive part of a Kac--Moody enveloping algebra through quiver representations began with Lusztig's work on convolution \cite{Lusztig1991}. Let $C=(c_{ij})_{i,j\in I}$ be a symmetrizable generalized Cartan matrix, where $I$ is finite. We write $\mf{n}(C)$ for the positive nilpotent subalgebra of the associated Lie algebra, $e_i$ for its Chevalley generators, and $U^+_\Q=U_\Q(\mf{n}(C))$. For symmetric $C$, Kashiwara and Saito gave a geometric realization of the crystal using nilpotent preprojective representations \cite{KashiwaraSaito}, and Lusztig constructed the \emph{semicanonical basis} from the irreducible components of their varieties \cite{Lusztig}. Gei\ss, Leclerc and Schr\"oer (GLS) studied the dual semicanonical basis and its multiplication \cite{GLS2005}. They subsequently related generating functions of Euler characteristics of partial composition series to cluster characters and generic bases associated with unipotent cells \cite{GLSJAMS}.

The extension of this geometry to nonsymmetric Cartan matrices requires a symmetrizer. Fix positive integers $d=(d_i)_{i\in I}$ with $d_ic_{ij}=d_jc_{ji}$, and put $D=\op{diag}(d_i)$. For an acyclic orientation $\Omega$ of the underlying graph, GLS introduced the algebras $H(C,D,\Omega)$ and $\Pi(C,D)$ in \cite{GLSI}. The subsequent papers treat changes of symmetrizers \cite{GLSII}, convolution algebras \cite{GLSIII}, and nilpotent varieties, crystal structures, and semicanonical functions \cite{GLSIV}. The dimension theorem, component count, and crystal on maximal components established there are basic ingredients of this paper. To obtain a basis from generic Euler-characteristic evaluations, GLS proposed a support condition on the Serre relations \cite[Conjecture~1.4]{GLSIV}. We follow the terminology ``Support Conjecture'' used in \cite[Proposition~5.11]{GeissICM}.

A power-series construction of hereditary species with the same ramified vertex fields and equivalent arrow bimodules appears in \cite[Section~4]{GLSRS}. That paper also studies rigid modules, that is, modules without self-extensions, and their relation to Schur roots. Nandakumar and Tingley obtained another geometric realization of symmetrizable crystals through modulated graphs \cite{NandakumarTingley}.

In earlier work \cite{FeiQP}, we introduced monodromy characters for ramified quivers with potentials by taking alternating traces of tame inertia on quiver Grassmannians. That paper establishes mutation formulas and, under its stated hypotheses, cluster bases. The present paper concerns preprojective representations and their flags instead. We use the coefficient-field comparison, the trace calculation for cyclically permuted strata, and the ramified Grassmannian formula of \cite[Proposition~7.2 and Lemmas~7.3--7.4]{FeiQP}. These results allow us to replace ordinary Euler-characteristic evaluations by monodromy traces in the preprojective construction.

\subsection{Ramification and monodromy}
The need for monodromy is visible already in the Grassmannians at a single vertex. We put
\[ \OO=\C[[t]],\qquad K=\C((t)),\qquad K_i=\C((t^{1/d_i})), \]
and choose an algebraic closure $\overline K$ containing compatible roots of $t$. A representation of a species has a $K_i$-vector space at vertex $i$. Over $\overline K$, the field $K_i$ splits into $d_i$ copies of $\overline K$. This changes the Euler characteristics of the relevant Grassmannians and accounts for the difference between ordinary Euler characteristic and the invariant we use.

Fix a prime $\ell$ and a topological generator $\sigma$ of the tame inertia group of $K$. For a separated scheme $X$ of finite type over $K$, define its \emph{monodromy trace} by
\[ \chi_\sigma(X)=\sum_q(-1)^q\Tr\bigl(\sigma\mid H_c^q(X_{\overline K},\Q_\ell)\bigr), \]
where $H_c^q$ denotes compactly supported $\ell$-adic cohomology. We write $\chi_c$ for the corresponding alternating sum of cohomology dimensions, or for the usual compactly supported Euler characteristic of a complex variety. By Nicaise's trace formula, $\chi_\sigma(X)$ is an integer independent of $\ell$ and of the chosen generator \cite[Corollary~6.5]{Nicaise}.

For a finite extension $L/K$, let $\Res_{L/K}$ denote \emph{Weil restriction}, and write $\Gr_a(V)$ for the Grassmannian of $a$-dimensional subspaces of $V$. For $0\le a\le r$ we have
\[ \chi_c\bigl((\Res_{K_i/K}\Gr_a(K_i^r))_{\overline K}\bigr)=\binom ra^{d_i},\qquad \chi_\sigma\bigl(\Res_{K_i/K}\Gr_a(K_i^r)\bigr)=\binom ra. \]
Indeed, after passing to $\overline K$ the Weil restriction becomes a product of $d_i$ Grassmannians, and inertia cyclically permutes its factors. The second equality follows from the K\"unneth formula; see \cite[Lemma~7.4]{FeiQP}. Representation flag varieties are subvarieties of similar products, defined by the conditions that the arrows preserve the flags. Their monodromy traces will give the Serre relations.

We use the vertex valuation rings $\OO_i=\OO[x_i]/(x_i^{d_i}-t)$ and the \emph{standard integral preprojective algebra} $\PP_d$ defined in Section~\ref{sec:setup}. Its generic fibre is $\Gamma_d=K\otimes_\OO\PP_d$. We work with right modules and multiply paths from left to right. For $k\ge1$, its central reduction is
\[ \Pi_k:=\PP_d/t^k\PP_d\cong\Pi(C,kD)^{\mr{op}}, \]
where $\Pi(C,kD)$ is the GLS preprojective algebra. We call a $\PP_d$-module a \emph{locally free lattice} if its vertex modules are free of finite rank over the $\OO_i$.

\subsection{Main results}
Let $J$ be the ideal of $\Gamma_d$ generated by its nonloop arrows. A finite-dimensional right $\Gamma_d$-module $M$ is \emph{nilpotent} if $MJ^n=0$ for some $n$. The module $S_i$ has vertex space $K_i$ at $i$ and zero spaces elsewhere. For a word $w=(i_1,\ldots,i_h)$ in $I$, let $\Fl^R_w(M)$ be the projective scheme of ascending submodule flags with successive quotients $S_{i_1},\ldots,S_{i_h}$. In families, each quotient has rank one over its vertex field, a condition we call \emph{balanced}. Set
\[ F_w(M)=\chi_\sigma(\Fl^R_w(M)), \]
with value zero when the word has the wrong rank and $F_\varnothing(0)=1$. Write $\mc{M}_\sigma(\Gamma_d)$ for the $\Q$-span of these functions on nilpotent modules.

\begin{maintheorem}[Ramified convolution]\label{intro:algebra}
For every symmetrizable Cartan matrix $C$ and every positive integral symmetrizer $D=\op{diag}(d_i)$, geometric convolution makes $\mc{M}_\sigma(\Gamma_d)$ a graded connected cocommutative Hopf algebra, and the assignment $e_i\mapsto F_i$ induces an isomorphism
\[ U^+_\Q\xrightarrow{\sim}\mc{M}_\sigma(\Gamma_d). \]
The coproduct is given by evaluation on direct sums of representations.
\end{maintheorem}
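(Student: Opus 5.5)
The plan follows the Lusztig/GLS template. Establish the convolution structure, verify the relative Serre relations, and then get injectivity from a crystal-type basis on nilpotent varieties. First I will define the convolution product on constructible $\Z$-valued functions on nilpotent $\Gamma_d$-modules by
\[ (f*g)(M)=\chi_\sigma\text{-integral over } \{N\subseteq M\} \text{ of } f(M/N)\,g(N), \]
taking the integral over the scheme of submodules, stratified by isomorphism type. This requires a motivic Fubini property for $\chi_\sigma$, which I will take from the coefficient-field comparison of \cite[Proposition~7.2]{FeiQP} and Nicaise's trace formula. With that, $F_{i_1}*\cdots*F_{i_h}=F_w$ is immediate, since flags fibre over their first step. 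Hence $\mc{M}_\sigma(\Gamma_d)$ is the subalgebra generated by the $F_i$, and it is graded by dimension vector over the vertex fields and connected. The coproduct $\Delta(f)(M,N)=f(M\oplus N)$ is cocommutative by symmetry of $\oplus$. It is compatible with the product because the $\Gm$-action scaling the summand $N$ has fixed-point submodules of the form $N_1\oplus N_2$. Inertia commutes with this $\Gm$-action, so localization for $\chi_\sigma$ applies. The generators $F_i$ are then primitive, since $F_i(M\oplus N)$ vanishes unless one of $M,N$ is zero. This gives a cocommutative connected graded Hopf algebra generated by primitives, and by Milnor--Moore it is $U(P)$ for its primitive Lie algebra.

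Next I will show that $e_i\mapsto F_i$ respects the Serre relations, and this is the hard step. Evaluate $\sum_p(-1)^p\binom{1-c_{ij}}{p}F_i^{p}F_jF_i^{1-c_{ij}-p}$ on a nilpotent $M$. On modules supported at $i,j$, the flag varieties are subvarieties of Weil restrictions of products of Grassmannians. Over $\overline K$ these split into $d_i$ (resp.\ $d_j$) factors, and $\sigma$ permutes the factors cyclically. I will use the trace calculation for cyclically permuted strata \cite[Lemma~7.3]{FeiQP}. The idea is to decompose the flag variety into strata on which $\sigma$ acts freely on components, so that these strata contribute $0$, while the $\sigma$-fixed part is the analogous flag variety of a module over a proper subextension. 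This yields the relative Serre relation modulo classes induced from proper finite extensions. To kill those induced classes, I plan an induction on $d$ (or on $\gcd$ data) to show that the relevant induced functions vanish. The base case reduces, via the count $\binom ra$ of the introduction, to the classical unramified symmetric-like computation. I expect this vanishing to be the main obstacle, since the Support Conjecture counterexample shows that Euler characteristics alone fail here. If a direct vanishing is unavailable, my fallback is to verify the relation on the primitive generated by $\op{ad}(F_i)^{1-c_{ij}}F_j$, by showing that it is primitive of a degree in which $P$ is zero. Surjectivity of $U^+_\Q\to\mc{M}_\sigma(\Gamma_d)$ then follows.

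Finally, for injectivity I will compare graded dimensions. For each dimension vector, the GLS component count \cite{GLSIV} for the maximal components of the nilpotent varieties of $\Pi_k$ equals $\dim U^+_\Q$ in that degree. I will pick a compatible generic point $M_Z$ of each maximal component $Z$. By a triangularity argument in the top-crystal order, using $\Fl^R_w(M_Z)$ for the word $w$ read off from $Z$ via the crystal operators, I will produce functions $f_Z\in\mc{M}_\sigma(\Gamma_d)$ with $f_Z(M_{Z'})=\delta_{ZZ'}$ up to a unitriangular correction. In this computation the leading coefficient is a $\chi_\sigma$ of a point and therefore equals $1$. This gives linear independence of at least $\dim U^+_\Q$ elements in each degree, so the surjection is an isomorphism.
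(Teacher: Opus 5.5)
\paragraph{The Serre step.} The genuine gap is here. Your first idea is correct and is the paper's mechanism: strata whose geometric components are freely permuted by $\sigma$ contribute nothing. The rest of the plan is misdirected.

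First, the induced classes need no killing, by induction on $d$ or otherwise. $\chi_\sigma$ vanishes on them automatically (Lemma~\ref{lem:induced-trace}), even though the classes themselves are nonzero.

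Second, what is actually missing is the computation on the Galois-stable part, and that part is not a flag variety over a proper subextension. For a general module over $\C((t))$, inertia is not induced by a finite-order automorphism of a complex variety. So there is no geometric ``$\sigma$-fixed locus'' to reduce to; that happens only in isotrivial situations such as Appendix~\ref{sec:acyclic}.

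The missing idea runs as follows.
\begin{itemize}
\item After splitting, each sheet above $i$ carries $W\xrightarrow{A}V\xrightarrow{B}W$ with $A\circ B=0$, $\dim V=m=1-c_{ij}$ and $\dim W=m-1$.
\item A flag of type $i^rji^{m-r}$ is then exactly an $r$-dimensional $U$ with $\im A\subseteq U\subseteq\ker B$, together with complete flags in $U$ and $V/U$.
\item Stratify the \emph{base} by the tuple of sheetwise ranks $(a_\nu,b_\nu)$. Non-constant tuples give properly induced classes.
\item On a constant stratum, the flag family is the Weil restriction of a Grassmann bundle of $\ker B/\im A$ with two flag refinements. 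The necklace lemma gives its class $h_r=r!(m-r)!\binom{m-a-b}{r-a}$ modulo induced classes.
\item Then $\sum_r(-1)^r\binom mr h_r=m!(-1)^a\sum_q(-1)^q\binom hq=0$. This uses $h=m-a-b\ge1$, which holds because $\im B\subseteq\ker A$ inside the $(m-1)$-dimensional $W$.
\end{itemize}

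\paragraph{The relative version and the product.} This computation must also be done relatively. The relation is needed for words $\mathbf u\,i^rji^{m-r}\,\mathbf v$ on arbitrary nilpotent $M$. There the intervening subquotient of rank $m\alpha_i+\alpha_j$ varies over a partial-flag scheme whose geometric points can have non-constant rank tuples.

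Your function-level convolution rests on a ``motivic Fubini'' that neither \cite[Proposition~7.2]{FeiQP} (a coefficient-field comparison) nor \cite[Corollary~6.5]{Nicaise} (integrality) provides. $\chi_\sigma$ is a trace on the cohomology of the total space, and fibrewise values at $K$-points do not obviously determine it.

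This is why the paper proves \eqref{eq:Serre-class} as an identity in $\overline{\ms K}(Y)$, stable under pullback, tensor product and proper pushforward. It defines convolution on equivariant Grothendieck groups modulo induced classes. It transports the product to $\mc M_\sigma(\Gamma_d)$ only after injectivity is proved.

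Your primitive fallback presupposes the same function-level Hopf structure. It would also require showing that every module of rank $m\alpha_i+\alpha_j$ decomposes. That is true: any $v\in\ker B_i\setminus\im A_i$ spans a direct summand $S_i$.

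\paragraph{Smaller repairs.}
\begin{itemize}
\item Coproduct: $\Gm$-fixed flags in $M\oplus N$ may send different sheets of a single quotient $S_i$ to different summands. These loci must be discarded as induced.
\item Injectivity: your outline has the paper's shape, but it needs the integral charts and Theorem~\ref{thm:tower}, built from Lemma~\ref{lem:extension}. These guarantee that compatible generic points exist and that the generic fibre keeps the special-fibre successive full-top ranks.
\item Separating functions on $\C((t))$-modules: $\C[[t]]$-points of the adapted charts suffice; generic points over a larger coefficient field are not needed.
\end{itemize}
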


The vertex ring of $\Pi_k$ at $i$ is $H_{i,k}=\C[x_i]/(x_i^{kd_i})$. Let $E_{i,k}$ be the module supported at $i$ with vertex module $H_{i,k}$ and with all nonloop arrows acting as zero. A module is \emph{locally free} if its vertex modules are free over these rings; it is \emph{$E$-filtered} if it has a filtration with successive quotients among the $E_{i,k}$. For a rank vector $\mathbf r=(r_i)\in\mb{N}^I$, where $\mb{N}=\{0,1,2,\ldots\}$, let $\Bcal_{kd}(\mathbf r)$ be the set of irreducible components of largest dimension in the closure of the $E$-filtered locus on the fixed vertex modules $H_{i,k}^{r_i}$. For $k=1$ we simply write $\Bcal_d(\mathbf r)$, and put
\[ q_d(\mathbf r)=\tfrac12\mathbf r^{\mathsf T}DC\mathbf r,\qquad N_d(\mathbf r)=\sum_i d_ir_i^2-q_d(\mathbf r). \]
The largest quotient and submodule supported at $i$ are called its \emph{$i$-top} and \emph{$i$-socle}; the corresponding operations on components give the two crystal structures of GLS.

Let $U^+_\Z$ be the \emph{divided-power integral form} of $U^+_\Q$, generated by $e_i^{(p)}=e_i^p/p!$. A subscript $\mathbf r$ indicates the homogeneous part of rank $\mathbf r$, with $\deg e_i=\alpha_i$, the $i$th coordinate vector of $\mb{Z}^I$.

\begin{maintheorem}[Generic monodromy basis]\label{intro:basis}
Reduction modulo $t^k$ induces compatible bijections
\[ \Bcal_{kd}(\mathbf r)\xrightarrow{\sim}\Bcal_d(\mathbf r),\qquad k\ge1, \]
which preserve the two crystal structures. For components corresponding under these bijections, reduction from level $k$ to level $l\le k$ is dominant. Their compatible generic points determine functionals $\lambda_Z^{\mon}$, independent of the choices used in their construction, and evaluation is an isomorphism
\[ U^+_{\Z,\mathbf r}\xrightarrow{\sim}\Z^{\Bcal_d(\mathbf r)},\qquad u\longmapsto\bigl(\lambda_Z^{\mon}(u)\bigr)_Z. \]
Consequently there is an integral basis $\{b_Z^{\mon}\}$ characterized by $\lambda_Y^{\mon}(b_Z^{\mon})=\delta_{Y,Z}$.
\end{maintheorem}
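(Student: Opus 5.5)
The proof has three parts. First, the component bijections and their crystal compatibility. Second, the construction of the functionals $\lambda_Z^{\mon}$ from compatible generic points. Third, integrality and unitriangularity of the evaluation pairing, which gives the basis.

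\emph{The bijections.} I start from the GLS results quoted in the introduction: for $\Pi(C,kD)$ the $E$-filtered locus on $H_{i,k}^{r_i}$ has pure top dimension $kN_d(\mathbf r)$, suitably normalized. Its maximal components are in bijection with the crystal $B(\infty)$ via the $i$-top and $i$-socle operations, for every $k$. Reduction modulo $t^l$ sends $E_{i,k}$-filtered modules to $E_{i,l}$-filtered ones, since $E_{i,k}/t^lE_{i,k}=E_{i,l}$. It therefore gives a morphism of $E$-filtered loci. I would show by induction on $|\mathbf r|$ that it commutes with the crystal operators. The key point is that on a generic point of a component, the $i$-top of $M/t^lM$ is the reduction of the $i$-top of $M$. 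For a locally free module the $i$-top is again locally free and generically of the expected rank, namely $\varepsilon_i$ of the component. Thus the operator $\tilde e_i$, which removes $E_{i,k}^{\varepsilon_i}$ and then adds back $E_{i,k}^{\varepsilon_i-1}$, is compatible with reduction. Since both sides are connected crystals generated from the zero component, compatibility with all $\tilde f_i$ forces the reduction map on components to be a bijection. For dominance, I compare dimensions fibrewise. The fibre of reduction over a locally free $\Pi_l$-module is an affine-space-like extension of dimension $(k-l)N_d(\mathbf r)$, obtained by lifting the arrow data. Combined with the dimension count, the image of a maximal component is then dense in a maximal component.

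\emph{Functionals.} Take a compatible system of generic points $M_k\in Z_k$, meaning $M_k/t^l=M_l$, and form the inverse limit. This gives a locally free lattice $\mathcal M$ over $\PP_d$, and its generic fibre $M=K\otimes\mathcal M$ is a nilpotent $\Gamma_d$-module. I define $\lambda_Z^{\mon}(F_w)=F_w(M)$. To see this is independent of choices, I use specialization: $\Fl^R_w$ of the lattice is a projective $\OO$-scheme. By the nearby-cycles formalism, $\chi_\sigma$ of the generic fibre is a constructible function of the lattice over the truncation level $k\gg0$, using the coefficient-field comparison of \cite[Proposition~7.2]{FeiQP}. That function is therefore constant on a dense open subset of $Z_k$. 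Well-definedness on $U^+_\Q$ is Theorem~\ref{intro:algebra}, since the $F_w$ span $\mc M_\sigma(\Gamma_d)$ and all relations among them are realized as functions.

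\emph{Integral isomorphism.} Integrality on divided powers uses the fact that $e_i^{(p)}$ corresponds to the flag variety of $S_i^{\oplus}$-steps of rank $p$ at vertex $i$. Its $\chi_\sigma$ is an integer by Nicaise's theorem. For unitriangularity, I order the components by $\varepsilon_i$ and argue as in Lusztig and GLS. Choose $i$ with $c=\varepsilon_i(Z)>0$ and write $Z=\tilde f_i^{\,c}Z'$. Then $\lambda_Z^{\mon}(e_i^{(c)}u)=\lambda_{Z'}^{\mon}(u)$ for $u$ of rank $\mathbf r-c\alpha_i$. Moreover $\lambda_Y^{\mon}(e_i^{(c)}u)=0$ whenever $\varepsilon_i(Y)<c$. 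The first identity relies on a computation: the generic $i$-top is $E_{i}^{c}$ over $K_i$, so the Grassmannian of rank-$c$ quotients is a point. Its monodromy trace is $\binom cc=1$, computed via \cite[Lemma~7.4]{FeiQP}, where the ramified Grassmannian gives $\binom ra$ and not $\binom ra^{d_i}$. This step is the main obstacle: one must show the flag scheme is generically a Weil restriction fibration, so that the trace factorizes. With unitriangularity in hand, the evaluation map $U^+_{\Z,\mathbf r}\to\Z^{\Bcal_d(\mathbf r)}$ is injective with integral unitriangular matrix on a spanning set. Comparing ranks via $|\Bcal_d(\mathbf r)|=\dim U^+_{\mathbf r}$ shows it is an isomorphism, and $b_Z^{\mon}$ is the dual basis.
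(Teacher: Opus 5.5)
Your proposal has a genuine gap in the first part, and everything after it depends on that part. You derive dominance of $Z_k\dashrightarrow Z_l$ from the claim that the fibre of reduction over a locally free $\Pi_l$-module is an affine space of dimension $(k-l)N_d(\mathbf r)$, obtained by lifting the arrow data. That dominance is what gives a well-defined map on labels, and hence the compatible generic points from which you form $\mathcal M$. The claim is false.

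At each step, the new $t$-adic coefficients enter the moment relations linearly, through the differential of the moment map at the length-one reduction $N$. The cokernel of that differential is dual to $\End(N)$, so lifts need not exist a priori. When they do exist, the lifts at each step form an affine space of dimension $N_d(\mathbf r)+\tfrac12\dim\Ext^1(N,N)$, by the Hom--Ext formula. This exceeds $N_d(\mathbf r)$ for every component without a dense orbit.

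You also give no control over which lifts stay in the closure of the $E$-filtered locus. Take $C=\left(\begin{smallmatrix}2&-2\\-2&2\end{smallmatrix}\right)$, $d=(1,1)$, $\mathbf r=(1,1)$, and a generic module with the arrows $2\to1$ equal to zero. A generic length-two lift has those arrows equal to $t$ times a nonzero vector, while the arrows $1\to2$ stay nonzero modulo $t$. Such a lift has no $E_{i,2}$-submodule and lies outside the $E$-filtered closure $\{a=0\}\cup\{b=0\}$.

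The missing idea is the paper's integral chart (Lemma~\ref{lem:extension} and Proposition~\ref{prop:charts}). One builds the component by successive free maximal-top extensions. Their parameters range over $\Hom_{\OO_i}(\OO_i^p,\ker A_i(U))$, which is free once a maximal minor of the surjective incoming map is inverted, so nothing is obstructed over $\OO$. This yields $\ms{X}_Z=D(f_Z)\subset\A^n_\OO$ with $X_{Z,k}\cong X_{Z,1}\times\A^{(k-1)n}$ inside the $E$-filtered locus. Its closure has dimension $kN_d(\mathbf r)$, so it is a maximal component dominating $Z$. Injectivity on labels then follows from the dimension bound, and bijectivity from \eqref{eq:published}. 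The same chart, with algebraically independent coefficients $\xi_{a,m}$, produces the compatible generic lattice directly.

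There are also some smaller points.

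\emph{The step you call the main obstacle.} When the requested quotient has the full top rank, the partial flag is unique and the divided-power value is $1$ directly. With ascending flags this is $\lambda_Z^{\mon}(ue_i^{(c)})$; the expression $e_i^{(c)}u$ gives the socle version.

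\emph{What actually needs proof there.} First, the generic fibre of your lattice, which is all that $\chi_\sigma$ sees, must have $i$-top of rank exactly the special-fibre rank. Second, its canonical kernel must again be generic, at every length, in the removed component. The paper gets the first from Smith normal form over $\Bbbk_Z[[x_i]]$: free tops at every length leave only unit and zero invariant factors, cf.\ Lemma~\ref{lem:split-vertex}. It gets the second from dominance of charts that begin with the prescribed removal. The same two facts give crystal compatibility of the bijections.

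\emph{Independence of choices.} Your nearby-cycles argument needs a truncation level uniform over a whole component, which you do not supply. It is also unnecessary. A lattice generic at every length yields an embedding $\kappa_Z\hookrightarrow\Bbbk$, under which its matrices are those of the universal lattice, and \cite[Proposition~7.2]{FeiQP} then compares the traces.

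\emph{Vanishing on $J_C$.} For modules over $\Bbbk_Z((t))$, take this from Corollary~\ref{thm:Serre}, which holds over enlarged coefficient fields. Theorem~\ref{intro:algebra} concerns modules over $\C((t))$, so it does not directly apply.
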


The basis is compatible with both crystal structures and gives bases of integrable representation lattices (Theorem~\ref{thm:perfect} and Proposition~\ref{prop:integralstrings}). It is invariant under common scaling of the symmetrizer and under the coordinate changes considered in Section~\ref{sec:invariance}. For symmetric $C$ it recovers Lusztig's semicanonical basis (Proposition~\ref{prop:symmetric}). The multiplication formula for its dual holds when the two generic cross-$\Ext^1$ spaces vanish (Proposition~\ref{prop:Ext-product}).

The classical special-fibre convolution has a different behaviour. Let $T_\Q=\Q\langle z_i:i\in I\rangle$ be the free associative algebra, and let $J_C$ be the two-sided ideal generated by the Serre elements $(\op{ad}z_i)^{1-c_{ij}}z_j$, where $(\op{ad}x)(y)=xy-yx$. The quotient $T_\Q/J_C$ is $U^+_\Q$. A monodromy functional is therefore also a functional on $T_\Q$. For a maximal component $Z$ of $\Pi_1$, let $\lambda_Z^{\mr{sp}}$ denote the generic value of the corresponding special-fibre convolution functions, before imposing the Serre relations. The \emph{Support Conjecture} of \cite[Conjecture~1.4]{GLSIV} requires every function in the Serre ideal to have support of dimension strictly below the maximal-component dimension in its degree.

\begin{maintheorem}[A counterexample to the Support Conjecture]\label{intro:counterexample}
For
\[ C_*=\begin{pmatrix}2&-2&0\\-1&2&-1\\0&-2&2\end{pmatrix},\qquad d_*=(1,2,1),\quad D_*=\op{diag}(d_*), \]
there is a maximal $E$-filtered component $Z$ of rank $(2,3,2)$ such that
\[ f=(\op{ad}z_2)^2(z_1)\,z_1z_3z_2z_3\in J_{C_*},\qquad \lambda_Z^{\mr{sp}}(f)=-2,\quad \lambda_Z^{\mon}(f)=0. \]
This component contains a dense orbit of a rigid module. In particular, the Support Conjecture fails even for the minimal symmetrizer.
\end{maintheorem}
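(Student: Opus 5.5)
The claim is concrete, so I would prove it by explicit computation in rank $(2,3,2)$ for $\Pi_1=\Pi(C_*,D_*)^{\mr{op}}$. Here $H_{1,1}=H_{3,1}=\C$ and $H_{2,1}=\C[x_2]/(x_2^2)$. Since $c_{21}=c_{23}=-1$ and $d_2=2$, the arrow bimodules between vertex $2$ and vertices $1,3$ are free of rank one over $H_2$.

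\emph{Membership $f\in J_{C_*}$.} This is immediate. Since $c_{21}=-1$, the element $(\op{ad}z_2)^{2}(z_1)=(\op{ad}z_2)^{1-c_{21}}z_1$ is a Serre generator, and $J_{C_*}$ is a two-sided ideal, so right multiplication by $z_1z_3z_2z_3$ stays in $J_{C_*}$. The degree of $f$ is $2\alpha_1+3\alpha_2+2\alpha_3$, which matches $\mathbf r=(2,3,2)$.

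\emph{The component $Z$.} I would exhibit a locally free rigid module $M$ of rank $(2,3,2)$, with $\Ext^1_{\Pi_1}(M,M)=0$, built as an extension assembled from $E_{1}$, $E_{2}$, $E_{3}$ in a pattern adapted to the word of $f$. I would compute $\dim\Ext^1(M,M)$ via the GLS bilinear-form formula $\dim\Ext^1(M,N)=\dim\Hom(M,N)+\dim\Hom(N,M)-(\mathbf r_M,\mathbf r_N)_{DC}$, valid for locally free modules, checking that $\dim\End(M)=q_d(\mathbf r)$. Then the orbit closure of $M$ has the maximal dimension $N_d(\mathbf r)$, so it is a component $Z\in\Bcal_d(\mathbf r)$ containing a dense orbit. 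The GLS dimension theorem shows that $N_d$ is the maximal dimension.

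\emph{Computing $\lambda_Z^{\mr{sp}}(f)$.} Expand $f$ into the signed sum of the $8$ words $w\,(1,3,2,3)$, where $w$ runs over the expansion of $(\op{ad}z_2)^2z_1=z_2z_2z_1-2z_2z_1z_2+z_1z_2z_2$ with its signs. On the generic point $M$, the special-fibre function of a word is the Euler characteristic of the variety of flags of $M$ whose subquotients are the $E$-generators $E_{i_j}$. I would compute each flag variety by peeling off successive $i$-tops (or socles). This reduces $M$ to smaller modules whose submodule lattices are explicit. At vertex $2$ the possible subquotients are $E_2$ versus the simple at $2$, and this is exactly where the nonreduced ring $\C[x_2]/(x_2^2)$ makes the count asymmetric. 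The three terms should give Euler characteristics whose signed combination is $-2$.

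\emph{Computing $\lambda_Z^{\mon}(f)$.} Next I would lift $M$ to a generic point of the compatible tower, that is, a locally free lattice $\widetilde M$ over $\PP_d$ whose reductions are generic in the components matching $Z$, as in Theorem~\ref{intro:basis}. I would then compute $F_w(K\otimes\widetilde M)=\chi_\sigma(\Fl^R_w)$ for the same words. Alternatively, since $\lambda^{\mon}$ factors through $U^+_\Q$ by Theorem~\ref{intro:algebra}, vanishing is automatic: $f\mapsto 0$ in $\mc{M}_\sigma(\Gamma_d)$. So $\lambda_Z^{\mon}(f)=0$ needs no computation, though I would still compute the $\chi_\sigma$ values to exhibit the mechanism. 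Over $\overline K$ the vertex-$2$ space splits into two Galois-conjugate lines. Inertia swaps them, which kills the contribution that produced $-2$ in the special fibre, by \cite[Lemma~7.4]{FeiQP}.

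\emph{Consequence.} The Support Conjecture fails because $f$ lies in the Serre ideal while its special-fibre function is nonzero at the generic point of a maximal component. Its support therefore has full dimension. The main obstacle is the special-fibre computation $\lambda_Z^{\mr{sp}}(f)=-2$. This requires correctly choosing $M$ and enumerating the $E$-flags over the non-reduced vertex ring, and I expect to organize it by a stratification according to the position of $x_2M_2$.
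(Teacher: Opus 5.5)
There is a genuine gap: the proposal never constructs the module, so the central claim $\lambda_Z^{\mr{sp}}(f)=-2$ is only asserted.

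Several parts of your plan are fine:
\begin{itemize}
\item $f\in J_{C_*}$ is immediate.
\item Rigidity and maximality do reduce to $\dim\End(M)=q_{d_*}(\mathbf r)=2$ through the Hom--Ext formula. With the loops fixed this gives orbit dimension $26-2=24=N_{d_*}(\mathbf r)$.
\item Your shortcut on the monodromy side is legitimate. Corollary~\ref{thm:Serre} holds over the enlarged coefficient field $\Bbbk_Z$; applied to the generic lattice $M_Z^{\mr{gen}}$, it gives $\lambda_Z^{\mon}(f)=0$ without computation.
\end{itemize}
The paper's explicit ramified lift only explains the mechanism, and your heuristic for that mechanism is off. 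Inertia exchanges the two geometric points of the generic fibre $\op{Spec}K(\sqrt t)$ of the relative flag scheme $\op{Spec}\OO[\lambda,z]/(\lambda^2-t,\lambda z)$ for the middle word. It does not swap lines in the vertex-$2$ space. The special-fibre line in $z$ is $t$-torsion and does not survive generically. Using an explicit lift would also require checking genericity at every length, which the paper does via $\dim_\C\End_{\Pi_k}(\ms{M}_k)=2k$.

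The substance of the theorem is that some maximal component carries a rigid module on which $\ev_\Pi(f)$ is nonzero. You only assert this (``the three terms should give \ldots\ $-2$''). You never produce $M$, and the description ``an extension of $E_1,E_2,E_3$ adapted to the word of $f$'' gives no reason why $\ev_\Pi(f)(M)\neq0$. GLS proved the support bound for the Serre generators themselves, so the counterexample has to be found, not presumed. Also, $f$ expands into three words, not eight: $w_+=2211323$, $w_0=2121323$, $w_-=1221323$, with coefficients $1,-2,1$.

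What is needed is a concrete module and three flag calculations. The paper takes $R_0=\C[\varepsilon]/(\varepsilon^2)$ and
\begin{itemize}
\item vertex spaces $M_1=\C u_1\oplus\C u_2$, $M_2=pR_0\oplus qR_0\oplus rR_0$, $M_3=\C v_1\oplus\C v_2$;
\item nonzero arrow values $u_1a=q$, $u_2a=-q\varepsilon$, $pb=u_1$, $(p\varepsilon)b=u_2$, $v_1d=p$, $v_2d=r+p\varepsilon$.
\end{itemize}
Then $\End(M)$ is two-dimensional, and the three flag varieties are as follows.
\begin{itemize}
\item $\Fl^{\lf}_{w_-}(M)$ is empty because $a$ is injective.
\item $\Fl^{\lf}_{w_+}(M)$ is empty. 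The prefix $22$ forces the vertex-$2$ part to be $\ker(\cdot b)=qR_0\oplus rR_0$, and the later vertex-$3$ line would have to map into it, while $M_3d\cap\ker(\cdot b)=0$.
\item For $w_0$, stability forces the vertex-$1$ line $\C u_2$, the middle plane $qR_0\oplus(r+p\varepsilon)R_0$, and the vertex-$3$ line $\C v_2$. Only the first line $(q+zr\varepsilon)R_0$, $z\in\C$, remains free, so the reduced flag variety is $\A^1$.
\end{itemize}
Hence $\ev_\Pi(f)(M)=0-2\cdot1+0=-2$. Without an explicit module and these computations, your proposal is a plan rather than a proof. Stratifying by the position of $x_2M_2$, as you suggest, does not by itself locate such a module.
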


The proof is an explicit calculation of three flag varieties and the endomorphism algebra of a module. For its ramified lift, the corresponding relative flag scheme explains why special-fibre Euler characteristic and monodromy trace give different answers.

There is also an acyclic realization. If $\Omega$ is an acyclic orientation, let $H=H(C,D,\Omega)^{\mr{op}}$, and let $\Lambda_d$ be the corresponding acyclic quotient of $\Gamma_d$. With $\mc{M}_\Q(H)$ denoting the Euler-characteristic convolution algebra generated by the vertex modules, we obtain the following result.

\begin{maintheorem}[Acyclic convolution and the GLS conjecture]\label{intro:acyclic}
For every symmetrizable Cartan matrix $C$, every positive integral symmetrizer $D$, and every acyclic orientation $\Omega$, the vertex-generator maps yield graded Hopf algebra isomorphisms
\[ U^+_\Q\xrightarrow{\sim}\mc{M}_\Q(H),\qquad U^+_\Q\xrightarrow{\sim}\mc{M}_\sigma(\Lambda_d). \]
The first isomorphism proves \cite[Conjecture~7.2]{GLSIII}, including the corresponding identification of primitive Lie algebras.
\end{maintheorem}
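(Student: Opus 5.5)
The plan is to derive both isomorphisms in Theorem~\ref{intro:acyclic} from the preprojective result, Theorem~\ref{intro:algebra}, and from a comparison between the acyclic quotient $\Lambda_d$ and the hereditary algebra $H$. Surjectivity of the vertex-generator maps is formal in both cases: $\mc{M}_\sigma(\Lambda_d)$ and $\mc{M}_\Q(H)$ are generated by the $F_i$, since convolution products of vertex functions are exactly the flag functions $F_w$. Injectivity is then formal as well. Both algebras are graded connected cocommutative Hopf algebras (the coproduct is evaluation on direct sums, as in Theorem~\ref{intro:algebra}), and they are generated by primitives. By Milnor--Moore each is therefore the enveloping algebra of its primitive Lie algebra, and that Lie algebra is generated by the images of the $e_i$. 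So the whole problem reduces to two tasks: checking that the Serre relations hold, so that maps from $U^+_\Q$ exist, and showing that the resulting Lie algebra maps from $\mf{n}(C)$ are injective.

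\emph{Existence for $\Lambda_d$.} Every nilpotent $\Lambda_d$-module is a nilpotent $\Gamma_d$-module on which the reverse arrows act by zero. Its flag schemes $\Fl^R_w$ are the same whether computed over $\Lambda_d$ or over $\Gamma_d$. Hence restriction of functions gives a surjective algebra homomorphism $\mc{M}_\sigma(\Gamma_d)\to\mc{M}_\sigma(\Lambda_d)$ sending $F_i\mapsto F_i$; it is compatible with convolution because $\Lambda_d$-modules are closed under submodules and quotients. Composing with Theorem~\ref{intro:algebra} gives a surjection $U^+_\Q\to\mc{M}_\sigma(\Lambda_d)$.

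\emph{Injectivity for $\Lambda_d$.} I would argue degree by degree via the Kac-type dimension count. Since $\mf{n}(C)$ is generated by the $e_i$, the kernel of $\mf{n}(C)\to\op{Prim}$ is a graded ideal. Any nonzero graded ideal of $\mf{n}(C)$ contains elements in real-root degrees or meets the image of some root vector after bracketing. So it suffices to show that the $e_i$ map to nonzero elements and that the root vector in each real root degree $\alpha$ survives. For a real root $\alpha$, take the rigid locally free indecomposable $H$-module of rank $\alpha$ (GLS), together with its ramified lift over $\Lambda_d$. Its monodromy traces separate it, i.e.\ some flag function is nonzero on it while vanishing on all decomposable modules. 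This gives a nonzero primitive in degree $\alpha$. For imaginary degrees I would instead use the preprojective comparison: the kernel of $U^+_\Q\to\mc{M}_\sigma(\Lambda_d)$ is a Hopf ideal. A Hopf ideal of $U(\mf{n})$ is generated by primitives, hence comes from a Lie ideal $\mf{a}$. Any nonzero Lie ideal of $\mf{n}(C)$ meets a real root space after applying $\op{ad}e_i$ repeatedly; this uses that $\mf{n}(C)$ has no nonzero ideals supported in imaginary degrees, because the center is trivial in positive degree. Since real root vectors survive, $\mf{a}=0$.

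\emph{Passing to $\mc{M}_\Q(H)$.} I would use the coefficient-field comparison \cite[Proposition~7.2]{FeiQP}. A locally free $H$-module $M$ over $\C$ lifts to a $\Lambda_d$-lattice, and the ramified Grassmannian formula expresses $\chi_c$ of flag varieties of $M$ in terms of $\chi_\sigma$ of relative flag schemes of the lift. This gives a surjective algebra map $\mc{M}_\sigma(\Lambda_d)\to\mc{M}_\Q(H)$ with $F_i\mapsto F_i$, possibly after rescaling generators by the factor relating $\binom ra^{d_i}$ and $\binom ra$; this rescaling must be normalized consistently. Injectivity then follows by the same real-root argument, using that rigid locally free modules have constant Euler-characteristic evaluations under specialization. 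The identification of primitive Lie algebras is exactly the Lie-level statement proved along the way.

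\emph{Main obstacle.} The step I expect to be hardest is this comparison map to $\mc{M}_\Q(H)$. A priori a function on $H$ is an Euler characteristic of a special-fibre flag variety, not a monodromy trace. Showing that convolution commutes with specialization needs the balanced-flag condition and a constructibility argument that every point of the special fibre lifts compatibly. I would first attempt this via nearby cycles on the relative flag schemes, reducing to the trace formula for cyclically permuted strata of \cite[Lemma~7.3]{FeiQP}.
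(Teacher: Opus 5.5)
Your Milnor--Moore reduction is sound. Both targets are connected, cocommutative and generated by primitive vertex functions. The Serre relations come from Corollary~\ref{thm:Serre} for $\Lambda_d$ and from \cite[Proposition~4.9]{GLSIII} for $H$. So everything reduces to injectivity of $\mf{n}(C)\to\op{Prim}$. But that injectivity is the whole content of the theorem, and your argument for it has two unproved steps.

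First, the claim that every nonzero graded ideal of $\mf{n}(C)$ meets a real root space does not follow from triviality of the center. Trivial center only gives some $[e_i,x]\neq0$, and such chains can stay imaginary. Take $x$ of imaginary degree $\alpha$ with $\langle\alpha,h_i\rangle=-m<0$. The vectors $(\op{ad}e_i)^kx$ for $0\le k\le m$ are nonzero, but their degrees have norm $(\alpha+k\alpha_i,\alpha+k\alpha_i)_d=(\alpha,\alpha)_d-2d_ik(m-k)\le0$, so all of them are imaginary.

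Second, in a real degree $\alpha$ you only assert that some flag function vanishes on all decomposables but not on the rigid indecomposable. Such a function is primitive, so this merely restates (in a stronger form) that $\op{Prim}_\alpha\neq0$, which is exactly what has to be proved. Rigidity of the module supplies no such function.

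The passage to $\mc{M}_\Q(H)$ by lifting and specializing also fails, because generic-fibre traces depend on the lift. For an arrow $i\to j$ in $\Omega$, lift the zero arrow of $E_i\oplus E_j$ either to $0$ or to $t$ times a nonzero admissible map. In the second case the generic fibre has no submodule $S_i$, so $F_{ij}$ vanishes there, although $\Fl^{\lf}_{ij}(E_i\oplus E_j)$ is a point. More generally, relative flag schemes need not be flat, and Proposition~\ref{prop:affine-relative} shows that special-fibre Euler characteristics and generic traces can disagree. No nearby-cycle argument can therefore define the homomorphism you want.

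The missing idea is a family of test modules on which both kinds of evaluation can be computed and compared with a known faithful realization. The paper supplies this as follows.
\begin{itemize}
\item It unfolds to the symmetric acyclic quiver $Q^{\mr{c}}$ and embeds $\mf{n}(C)$ in $\mf{n}(\widetilde C)$ by $e_i\mapsto\sum_{u\in I_i^{\mr{c}}}\widetilde e_u$ (Lemma~\ref{lem:Lie-injection}).
\item For each representation $V$ of $Q^{\mr{c}}$ it builds the $H$-module $\Phi(V)$ and the lattice $\ms{N}(V)$.
\item Using $\Gm$-localization and a finite-order Lefschetz formula, it proves \eqref{eq:acyclic-detection} and \eqref{eq:acyclic-trace}. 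These reduce both evaluations to ordinary flag Euler characteristics of $V$.
\item Schofield's realization of $U_\Q(\mf{n}(\widetilde C))$ then puts every element vanishing on all these modules in $J_C$.
\end{itemize}
This proves both injectivities at once, with no specialization argument.
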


The Dynkin case of the first isomorphism was established in \cite[Theorem~7.1]{GLSIII}. The proof of the general case in Appendix~\ref{sec:acyclic} uses an auxiliary acyclic quiver and does not depend on the component charts. The same identification gives the canonical Serre quotient of the ordinary preprojective convolution algebra: restriction to any acyclic orientation has kernel exactly the Serre ideal (Theorem~\ref{thm:canonical-quotient}).

\subsection{Proof and organization}
To prove the Serre relations, we retain the flag data on both sides of the subquotient where a Serre relation is applied. On a locus where the relevant linear maps have constant rank, the remaining flag variety is a Grassmann bundle with two complete-flag refinements. The Galois action on its cohomology reduces the Serre expression to an alternating binomial sum modulo properly induced classes.

For the basis, we construct integral families using the kernels of the incoming maps. Compatible families give generic representations at every central length, and successive maximal-top removals produce a unitriangular evaluation matrix. This proves both the integral basis theorem and faithfulness of convolution.

Sections~\ref{sec:setup}--\ref{sec:serre} give the algebra and the relative Serre theorem; Section~\ref{sec:hopf} constructs convolution. Sections~\ref{sec:charts}--\ref{sec:basis} establish the component correspondence and the basis, including a calculation in type $B_2$. Sections~\ref{sec:perfect}--\ref{sec:applications} establish its structural properties. Section~\ref{sec:comparison} identifies the canonical Serre quotient and contrasts it with ordinary generic evaluation through the affine counterexample and its ramified lift. The appendix proves the acyclic realization.

\section{The standard integral algebra}\label{sec:setup}
We keep the Cartan matrix $C$, the symmetrizer $D=\op{diag}(d_i)$, and the fields and rings from Section~\ref{sec:introduction}. Choose an acyclic orientation $\Omega$ of the underlying graph. For adjacent vertices set $\epsilon_{ij}=1$ if $i\to j$ belongs to $\Omega$, and $\epsilon_{ji}=-\epsilon_{ij}$. Put
\[ g_{ij}=\gcd(|c_{ij}|,|c_{ji}|),\qquad f_{ij}=|c_{ij}|/g_{ij},\qquad e_{ij}=\gcd(d_i,d_j). \]
These numbers are used only when $c_{ij}<0$. Symmetrizability gives
\[ d_i=e_{ij}f_{ji},\qquad d_j=e_{ij}f_{ij}. \]

Take a quiver with vertex set $I$, a loop $x_i$ at each vertex, and $g_{ij}$ arrows $a_{ij}^{(c)}:i\to j$ for every ordered adjacent pair, where $1\le c\le g_{ij}$. Let $1_i$ be its vertex idempotents. We define $\PP_d$ as the quotient of its path algebra over $\OO$ by
\begin{align}
 &x_i^{d_i}=t1_i,\notag\\
 &x_i^{f_{ji}}a_{ij}^{(c)}=a_{ij}^{(c)}x_j^{f_{ij}},\label{eq:comm}\\
 \rho_i:=&\sum_{j:\,c_{ij}<0}\epsilon_{ij}\sum_{c=1}^{g_{ij}}\sum_{q=0}^{f_{ji}-1}
 x_i^qa_{ij}^{(c)}a_{ji}^{(c)}x_i^{f_{ji}-1-q}=0.\notag
\end{align}
We call the last relations the \emph{moment relations}. The right-module convention gives $a_{ij}^{(c)}\in1_i\PP_d1_j$, with action $M_i=M1_i\to M_j$, $v\mapsto va_{ij}^{(c)}$. The path $ab$ first follows $a$ and then $b$. When we write matrices of linear maps, we use column vectors: the matrix of right multiplication by $ab$ is the matrix of $b$ times the matrix of $a$.

These are the opposite GLS relations after relabelling arrows. The equivalence from GLS left modules to our right modules is $m\cdot a^{\mr{op}}=am$. It preserves submodules, so ascending flag words have the same order on the two sides. The generic and special algebras are
\[ \Gamma_d=K\otimes_\OO\PP_d,\qquad \Pi_k=\PP_d/t^k\PP_d\cong\Pi(C,kD)^{\mr{op}}. \]
A locally free lattice is a right $\PP_d$-module whose space at each vertex $i$ is finite free over $\OO_i$.

\subsection{Modules and their flags}
For a module $M$ over $\Gamma_d$, a locally free module over $\Pi_k$, or a locally free lattice, $\underline{\rk}M=(r_i(M))_{i\in I}$ denotes the ranks over the corresponding vertex fields or rings. We write $|\mathbf r|=\sum_i r_i$. For a lattice or a $\Pi_k$-module, nilpotence means annihilation by some power of the ideal generated by the nonloop arrows, as for $\Gamma_d$. The vertex parameters are not included in that ideal.

We use the simple generic modules $S_i$ and the special modules $E_{i,k}$ defined in Section~\ref{sec:introduction}; put $E_i=E_{i,1}$. An integral $E$-filtration has successive quotients supported at a single vertex and free of rank one over its vertex ring, with zero nonloop action. Every such filtration gives a nilpotent generic fibre.

For a word $\mathbf w=(i_1,\ldots,i_h)$ of rank $\sum_a\alpha_{i_a}$, the balanced flag scheme $\Fl^R_{\mathbf w}(M)$ has the following functorial meaning. On a $K$-scheme $Y$, it parametrizes flags
\[ 0=M^0\subset M^1\subset\cdots\subset M^h=M\otimes_K\mc{O}_Y,\qquad \underline{\rk}(M^a/M^{a-1})=\alpha_{i_a}, \]
where all members are arrow-stable and, at each vertex, are subbundles over $K_i\otimes_K\mc{O}_Y$ with locally free quotients. Thus the specified ranks hold at every geometric point, including at each split vertex. The superscript $R$ refers to $R=\prod_iK_i$. Arrow stability defines a closed subscheme of the product of the vertex flag varieties obtained by Weil restriction. In particular, $\Fl^R_{\mathbf w}(M)$ is projective over $K$.

We similarly write $\Fl^{\lf}_{\mathbf w}(M)$ for special-fibre flags with successive quotients $E_{i_1,k},\ldots,E_{i_h,k}$; the superscript $\lf$ means locally free. These flag schemes need not be proper. Their compactly supported Euler characteristics give the special-fibre convolution words. Our convolution convention evaluates the left factor on a submodule, so word order records successive quotients in ascending order. A \emph{partial flag} allows a successive quotient $S_i^p$, or $E_{i,k}^p$, in place of $p$ consecutive quotients of rank one; exponent zero means that the quotient is omitted.

The grading on the free algebra $T_\Q$ is $\deg z_i=\alpha_i$. We use the linear evaluation map
\[ \ev_\sigma:T_\Q\longrightarrow\mc{M}_\sigma(\Gamma_d),\qquad z_{i_1}\cdots z_{i_h}\longmapsto F_{\mathbf w}. \]
Section~\ref{sec:serre} proves that it annihilates the Serre ideal $J_C$. We use the same word notation for the Euler-characteristic evaluations at a central reduction.

\subsection{The component geometry used below}
With $d$ fixed, abbreviate $\Bcal_{kd}(\mathbf r)$ to $\Bcal_k(\mathbf r)$. These are the maximal components in the closure of the $E$-filtered locus on the fixed vertex modules $H_{i,k}^{r_i}$. We take their reduced structures when forming function fields. The vertex change-of-basis group is $\prod_i\op{GL}_{r_i}(H_{i,k})$, of complex dimension $k\sum_i d_ir_i^2$.

For a module $M$, write $\op{sub}_i(M)$ for its largest submodule supported at $i$, and $\fac_i(M)$ for its largest quotient supported at $i$. Thus these are the $i$-socle and $i$-top introduced above. Put $\mathsf K_i(M)=\ker(M\to\fac_i(M))$. The two \emph{canonical removals} are $\mathsf K_i(M)$ and $M/\op{sub}_i(M)$. Following \cite[Section~4.2]{GLSIV}, an $E$-filtered module is a \emph{crystal module} if all its $i$-tops and $i$-socles are locally free and both canonical removals are recursively crystal. The zero module starts the recursion. The supported tops and socles need not be semisimple.

The GLS dimension, generic-crystallinity, and component-count theorems give, for $Z\in\Bcal_k(\mathbf r)$,
\begin{equation}\label{eq:published}
 \dim Z=kN_d(\mathbf r),\qquad
 |\Bcal_k(\mathbf r)|=\dim_\Q U^+_{\Q,\mathbf r},\qquad
 N_d(\mathbf r)=\sum_{i<j}d_i(-c_{ij})r_ir_j.
\end{equation}
Here $i<j$ uses any fixed ordering of $I$; the summand is symmetric in the two endpoints. A dense open subset of every maximal component consists of crystal modules. We use \cite[Theorem~4.1, Proposition~4.4, and Theorems~5.4 and~7.2]{GLSIV} for these statements.

We also use the following maximal-top correspondence. If the generic free $i$-top of a maximal component has rank $p$, removing that entire top gives a maximal component of rank $\mathbf r-p\alpha_i$ whose generic $i$-top is zero. Conversely, the associated extension stratum has irreducible fibres over that smaller component. There is an analogous correspondence for socles. For tops, see \cite[Sections~3.7--3.9 and~5.3, Corollary~4.6]{GLSIV}; socles follow by duality. Repeated extensions by $E_{i,k}$ give the two crystal operators, with the geometric convention that adding $E_{i,k}$ raises the rank by $\alpha_i$.

\subsection{Incoming and outgoing maps}\label{sec:adjunction}
For an adjacent pair $i,j$, let
\[ \OO_{ij}=\OO[z]/(z^{e_{ij}}-t),\qquad z\longmapsto x_i^{f_{ji}}\text{ in }\OO_i,\quad z\longmapsto x_j^{f_{ij}}\text{ in }\OO_j. \]
The power basis $1,x_i,\ldots,x_i^{f_{ji}-1}$ makes $\OO_i$ free over $\OO_{ij}$. Define
\[ \ell_{ij}:\OO_i\longrightarrow\OO_{ij},\qquad \ell_{ij}\!\left(\sum_{q=0}^{f_{ji}-1}c_qx_i^q\right)=c_{f_{ji}-1},\quad c_q\in\OO_{ij}. \]
This is a \emph{Frobenius functional}: the pairing $(u,v)\mapsto\ell_{ij}(uv)$ is perfect over $\OO_{ij}$. Its matrix in the power basis has ones on the anti-diagonal and zeros elsewhere. Indeed, a power $x_i^{p+q}$ with $p+q\ge f_{ji}$ reduces to a multiple of a power of exponent at most $f_{ji}-2$. The pairing remains perfect after every base change. It gives the integral adjunction underlying \cite[Section~5]{GLSI} and \cite[Section~2.3]{GLSIV}.

For a locally free representation $U$ of $\PP_d$, set
\[ W_i(U)=\bigoplus_{j:\,c_{ij}<0}\ \bigoplus_{c=1}^{g_{ij}}U_j\otimes_{\OO_{ij}}\OO_i. \]
It is free over $\OO_i$ of rank $\sum_{j\ne i}(-c_{ij})r_j$, where $\mathbf r=\underline{\rk}U$. The incoming map $A_i:W_i(U)\to U_i$ sends $u\otimes a$ in the $(j,c)$ summand to $(ua_{ji}^{(c)})a$. The outgoing arrows determine, by the Frobenius adjunction, a map $B_i:U_i\to W_i(U)$ whose $(j,c)$ component is
\[ B_i(v)_{j,c}=\epsilon_{ij}\sum_{q=0}^{f_{ji}-1}\bigl(vx_i^{f_{ji}-1-q}a_{ij}^{(c)}\bigr)\otimes x_i^q. \]
Thus
\[ W_i(U)\xrightarrow{A_i}U_i\xrightarrow{B_i}W_i(U),\qquad (A_i\circ B_i)(v)=v\rho_i. \]
We write $A_iB_i$ for $A_i\circ B_i$. The moment relation is $A_iB_i=0$; it does not require $B_iA_i=0$. The map $B_i$ and the outgoing arrow maps determine one another. The construction is valid over the generic fibre and all central reductions, and commutes with scalar extension.

\subsection{Duality}\label{sec:duality}

The anti-involution $*$ of $\PP_d$ fixes $1_i,x_i$ and interchanges $a_{ij}^{(c)}$ with $a_{ji}^{(c)}$. Reversing the summation index preserves each moment relation. For a locally free lattice $M$ over $\Bbbk[[t]]$, with $\Bbbk\supseteq\C$ algebraically closed, put
\[ M^\vee=\Hom_{\Bbbk[[t]]}(M,\Bbbk[[t]]),\qquad (\varphi a)(m)=\varphi(ma^*). \]
The coefficient Frobenius pairings make $M^\vee$ vertexwise free of the same ranks. Duality commutes with reduction; at length $k$, composition with the coefficient of $t^{k-1}$ identifies the relative dual with the ordinary $\Bbbk$-linear dual. It is exact on lattice sequences with locally free quotients. At finite length it preserves $E$-filtrations and maximal components, inducing $Z\mapsto Z^*$.

Annihilators reverse admissible flags, over their ground field. For crystal modules at finite length they give
\[ \fac_i(M^\vee)\cong\op{sub}_i(M)^\vee,\qquad
\mathsf K_i(M^\vee)\cong(M/\op{sub}_i(M))^\vee. \]
Thus top and socle correspondences are dual. The integral splitting needed for canonical removals is established in Section~\ref{sec:perfect}.

\subsection{Splitting the vertex fields}
Put $L=\lcm_i d_i$ and $K^{(L)}=\C((t^{1/L}))$. After scalar extension to $K^{(L)}$, each $K_i$ is a product of $d_i$ copies of $K^{(L)}$, indexed by the eigenvalues $\xi$ of $x_i$, with $\xi^{d_i}=t$. The commutation relation \eqref{eq:comm} permits an arrow from $\xi$ at $i$ to $\eta$ at $j$ precisely when $\xi^{f_{ji}}=\eta^{f_{ij}}$. A fixed vertex above $i$ is incident with $g_{ij}f_{ij}=-c_{ij}$ arrows joining it to vertices above $j$.

For two distinct eigenvalues $\xi,\xi'$ above $i$, the coefficient in the $i$-moment relation is
\[ \sum_{q=0}^{a-1}\xi^q(\xi')^{a-1-q}=\frac{\xi^a-(\xi')^a}{\xi-\xi'},\qquad a=f_{ji}. \]
It vanishes on an allowed two-arrow path, since its endpoints have the same $a$th power. For equal eigenvalues, the coefficient is $a\xi^{a-1}$. Multiplying the $i$-equation by $x_i/d_i$ makes the edge coefficient $z_{ij}/e_{ij}$, where $z_{ij}=\xi^{f_{ji}}=\eta^{f_{ij}}$. The coefficient is the same at the opposite endpoint. Rescaling one arrow of each doubled pair by this nonzero coefficient gives the ordinary signed preprojective relations. We thereby identify the split algebra with the ordinary preprojective algebra of this split quiver, compatibly with Galois descent. The acyclic vertex and arrow construction is that of \cite[Section~4]{GLSRS}; its arrow bimodules are the composita of the vertex fields and agree with $K_i\otimes_{K_{ij}}K_j$ here because $f_{ij}$ and $f_{ji}$ are coprime.

\section{Constructible classes modulo induced classes}\label{sec:induced}
In this section and Sections~\ref{sec:serre}--\ref{sec:hopf}, we allow $K=\Bbbk((t))$ with $\Bbbk\supseteq\C$ algebraically closed, and keep the notation for the algebras after this coefficient extension. For any field extension $L/K$ and $K$-scheme $Y$, write $Y_L=Y\times_K\op{Spec}L$. Choose compatible roots of $t$ and set
\[ K^{(e)}=\Bbbk((t^{1/e})),\qquad K_i=K^{(d_i)}. \]
Every finite extension of $K$ in the chosen algebraic closure is one of the $K^{(e)}$. Its absolute Galois group $G_K=\op{Gal}(\overline K/K)$ is isomorphic to the profinite completion $\widehat\Z$ of $\Z$. We choose a topological generator $\sigma$; the subgroup $G_{K^{(e)}}$ corresponds to $e\widehat\Z$.

Here $\op{Ind}_{G_{K^{(e)}}}^{G_K}$ denotes induction of continuous representations along this finite-index inclusion. If $e>1$, the trace of $\sigma$ on an induced representation is zero, since it permutes the coset summands without fixing one. This motivates the quotient by properly induced classes defined below. The following example explains the calculation.

\begin{example}\label{ex:swap}
For the quadratic extension $K^{(2)}/K$, the geometric fibre of $\Res_{K^{(2)}/K}\mb{P}^1$ is $(\mb{P}^1)^2$. Its cohomology is concentrated in even degrees. We use $\Q_\ell(m)$ for the $m$th Tate twist; its Galois action is trivial here because $K$ contains all roots of unity. As an ungraded Galois representation, the total cohomology is
\[ H^*((\mb{P}^1)^2,\Q_\ell)\cong\Q_\ell^{\oplus2}\oplus\op{Ind}_{G_{K^{(2)}}}^{G_K}\Q_\ell. \]
The two classes in degree two are exchanged by inertia. The induced summand has trace zero, so the monodromy trace is $2$, whereas the geometric Euler characteristic is $4$.
\end{example}

\subsection{The quotient of Grothendieck groups}
Let $Y$ be a separated finite-type $K$-scheme. We write $D_c^b(Y,\Q_\ell)$ for the bounded derived category of $\ell$-adic complexes on $Y$ with constructible cohomology sheaves. A sheaf is constructible if it is locally constant of finite rank on each member of a finite stratification. Set
\[ \ms{K}(Y)=K_0(D_c^b(Y,\Q_\ell)). \]
This Grothendieck group is generated by classes $[\mc{A}]$, with $[\mc{B}]=[\mc{A}]+[\mc{C}]$ for a distinguished triangle $\mc{A}\to\mc{B}\to\mc{C}\to\mc{A}[1]$. Derived tensor product makes it a ring. We write $\Q_{\ell,Y}$ for the constant sheaf on $Y$, omitting $Y$ when its meaning is clear. For a morphism $f$, $f^*$ denotes pullback and $Rf_!$ denotes derived direct image with proper support. These are the constructible \'etale operations of \cite{SGA4half}.

For $e\ge1$, let $\pi_{e,Y}:Y_{K^{(e)}}\to Y$ be the finite \'etale projection; write $\pi_e=\pi_{e,\op{Spec}K}$. A \emph{properly induced class} is a class
\begin{equation}\label{eq:induced-subgroup}
 [R\pi_{e,Y!}\mc{A}],\qquad e>1,\quad\mc{A}\in D_c^b(Y_{K^{(e)}},\Q_\ell).
\end{equation}
Thus ``properly'' refers to an extension of degree greater than one. Let $\ms{N}(Y)$ be the subgroup generated by these classes, and set
\[ \overline{\ms{K}}(Y)=\ms{K}(Y)/\ms{N}(Y). \]
A subscript $\Q$ on either group means tensor product with $\Q$. All parameter-scheme morphisms below are over the fixed field $K$.

\begin{proposition}\label{prop:induced-operations}
The subgroup $\ms{N}(Y)$ is an ideal for tensor product. Pullback, direct image with proper support, and exterior products preserve induced classes. Hence they descend to the groups $\overline{\ms{K}}$.
\end{proposition}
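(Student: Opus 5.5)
The plan is to reduce everything to three base-change identities for the finite étale projection $\pi_e=\pi_{e,Y}$: the projection formula, proper base change, and compatibility with products. All three are standard in the constructible étale formalism of \cite{SGA4half}. For $e>1$ and $\mc{A}\in D_c^b(Y_{K^{(e)}},\Q_\ell)$, write $N_e(\mc{A})=[R\pi_{e,Y!}\mc{A}]$. Since $\pi_e$ is finite, $R\pi_{e!}=\pi_{e*}$ is exact, so $N_e$ is a well-defined homomorphism $\ms{K}(Y_{K^{(e)}})\to\ms{K}(Y)$. Consequently $\ms{N}(Y)$ is the sum of the images of these homomorphisms. It therefore suffices to check each operation on generators $N_e(\mc{A})$, and to show that the result is again of the form $N_{e'}(\mc{B})$ with $e'>1$.

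\emph{Ideal property.} For $\mc{B}\in D_c^b(Y,\Q_\ell)$, the projection formula gives
\[ R\pi_{e!}\mc{A}\otimes^L\mc{B}\cong R\pi_{e!}(\mc{A}\otimes^L\pi_e^*\mc{B}). \]
Hence $N_e(\mc{A})\cdot[\mc{B}]=N_e([\mc{A}\otimes\pi_e^*\mc{B}])\in\ms{N}(Y)$.

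\emph{Pullback and proper direct image.} Let $f:Y'\to Y$ be a $K$-morphism. The fibre product $Y'\times_YY_{K^{(e)}}$ is canonically $Y'_{K^{(e)}}$, with projection $\pi_{e,Y'}$ and with $f_{K^{(e)}}$ as the other map. Proper base change (here trivially, since $\pi_e$ is finite) gives
\[ f^*R\pi_{e,Y!}\mc{A}\cong R\pi_{e,Y'!}f_{K^{(e)}}^*\mc{A}. \]
For direct image we use $Rf_!R\pi_{e,Y'!}\cong R\pi_{e,Y!}Rf_{K^{(e)}!}$, which follows from functoriality of $R(-)_!$ and the equality $f\circ\pi_{e,Y'}=\pi_{e,Y}\circ f_{K^{(e)}}$. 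This is exactly where the hypothesis that all parameter morphisms are over the fixed field $K$ is used: it guarantees that $f$ commutes with the base change to $K^{(e)}$.

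\emph{Exterior products.} Write $\mc{A}\boxtimes\mc{B}=p_1^*\mc{A}\otimes^Lp_2^*\mc{B}$ on $Y\times_KY''$. Exterior products then preserve $\ms{N}$ by the two previous steps, since an exterior product is a pullback followed by a tensor product. The ideal property and pullback-stability give it directly.

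\emph{Descent.} With these three steps, $\otimes$, $f^*$, $Rf_!$ and $\boxtimes$ preserve $\ms{N}$ in each variable. They are additive on $\ms{K}$ because of exactness on triangles. Hence they induce maps on the quotients $\overline{\ms{K}}$, and likewise after tensoring with $\Q$.

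The only point needing care is the identification $Y'\times_YY_{K^{(e)}}\cong Y'_{K^{(e)}}$ together with the cartesian square used in base change. Everything else is formal. I expect no real obstacle. If anything is delicate, it is confirming that $R\pi_{e!}$ is defined on $\ms{K}$ and lands in constructible complexes, which holds because $\pi_e$ is finite étale of finite type.
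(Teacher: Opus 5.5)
Your proposal is correct and follows the paper's proof essentially step for step. You use the projection formula for the ideal property, base change along the finite map $\pi_{e,Y}$ for pullback, the identity $f\circ\pi_{e,Y'}=\pi_{e,Y}\circ f_{K^{(e)}}$ for proper direct image, and you write $\boxtimes$ as pullbacks followed by a tensor product.
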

\begin{proof}
For $\mc{B}\in D_c^b(Y,\Q_\ell)$, the projection formula gives
\[ (R\pi_{e,Y!}\mc{A})\otimes^{\mathbf L}\mc{B}\cong R\pi_{e,Y!}(\mc{A}\otimes^{\mathbf L}\pi_{e,Y}^*\mc{B}). \]
This proves the ideal assertion. If $f:X\to Y$ is separated and of finite type, let $f_e:X_{K^{(e)}}\to Y_{K^{(e)}}$ be its scalar extension. Base change for the finite map $\pi_{e,Y}$ gives
\[ f^*R\pi_{e,Y!}\cong R\pi_{e,X!}f_e^*. \]
The identity $f\pi_{e,X}=\pi_{e,Y}f_e$ also gives
\[ Rf_!R\pi_{e,X!}\cong R\pi_{e,Y!}Rf_{e!}. \]
These formulas prove preservation under pullback and direct image. For an exterior product, write $\mc{A}\boxtimes\mc{B}=p_X^*\mc{A}\otimes^{\mathbf L}p_Y^*\mc{B}$ on $X\times_KY$, where $p_X,p_Y$ are the projections, and apply the same statements.
\end{proof}

For $\mc{A}\in D_c^b(Y,\Q_\ell)$, define the trace with coefficients by
\[ \chi_\sigma(Y,\mc{A})=\sum_q(-1)^q\Tr\!\left(\sigma\mid H_c^q(Y_{\overline K},\mc{A}|_{Y_{\overline K}})\right). \]
Constant coefficients recover $\chi_\sigma(Y)$.
\begin{lemma}\label{lem:induced-trace}
The functional $\chi_\sigma(Y,-)$ factors through $\overline{\ms{K}}(Y)$.
\end{lemma}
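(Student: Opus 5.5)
The plan has two steps. First, $\chi_\sigma(Y,-)$ is additive on $\ms{K}(Y)$. Second, it vanishes on every generator $[R\pi_{e,Y!}\mc{A}]$ of $\ms{N}(Y)$ with $e>1$.

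\textbf{Additivity.} A distinguished triangle $\mc{A}\to\mc{B}\to\mc{C}\to\mc{A}[1]$ in $D_c^b(Y,\Q_\ell)$ gives a long exact sequence of $G_K$-representations
\[ \cdots\to H_c^q(Y_{\overline K},\mc{A})\to H_c^q(Y_{\overline K},\mc{B})\to H_c^q(Y_{\overline K},\mc{C})\to H_c^{q+1}(Y_{\overline K},\mc{A})\to\cdots. \]
Each term is finite-dimensional, by finiteness of $R\Gamma_c$ for constructible complexes \cite{SGA4half}. Since the sequence is $\sigma$-equivariant, the alternating sum of traces of $\sigma$ along it vanishes. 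Hence $\chi_\sigma(Y,\mc{B})=\chi_\sigma(Y,\mc{A})+\chi_\sigma(Y,\mc{C})$, and $\chi_\sigma(Y,-)$ descends to $\ms{K}(Y)$.

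\textbf{Vanishing on induced classes.} Put $\mc{A}'=R\pi_{e,Y!}\mc{A}$. By the composition identity $R\Gamma_c(Y,R\pi_{e,Y!}\mc{A})\cong R\Gamma_c(Y_{K^{(e)}},\mc{A})$ (the same identity used in Proposition~\ref{prop:induced-operations}), the geometric cohomology is
\[ H_c^q(Y_{\overline K},\mc{A}'|_{Y_{\overline K}}). \]
Now describe the geometric fibre. We have $Y_{K^{(e)}}\times_K\overline K\cong\coprod_{\tau}Y_{\overline K}$, where $\tau$ runs over the $K$-embeddings $K^{(e)}\hookrightarrow\overline K$, which form the coset space $G_K/G_{K^{(e)}}$. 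Compatibly, $\mc{A}'|_{Y_{\overline K}}$ is the sum over $\tau$ of the $\tau$-twists of $\mc{A}|_{Y_{\overline K}}$. Therefore
\[ H_c^q(Y_{\overline K},\mc{A}')\cong\op{Ind}_{G_{K^{(e)}}}^{G_K}H_c^q\bigl(Y_{\overline K},\mc{A}|_{Y_{\overline K}}\bigr) \]
as $G_K$-representations, where the inner cohomology is computed with $Y_{K^{(e)}}$ regarded over $K^{(e)}$.

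This identification of the geometric fibre with the induced representation, with the correct Galois action, is the main point to check. The cleanest route is Shapiro-type compatibility: $\pi_{e!}$ on $\op{Spec}K$ is induction of $G_{K^{(e)}}$-modules. I would write $R\Gamma_c(Y,\mc{A}')=R\pi_{e!}R\Gamma_c(Y_{K^{(e)}},\mc{A})$ on $\op{Spec}K$ and use $Rf_!R\pi_{e,Y!}\cong R\pi_{e!}Rf_{e!}$ with $f:Y\to\op{Spec}K$, which again is the identity from Proposition~\ref{prop:induced-operations}. Then $\pi_{e!}$ of a sheaf on $\op{Spec}K^{(e)}$ corresponds to the induced $G_K$-module, by the standard dictionary for finite étale maps.

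Finally, $\sigma$ generates $G_K\cong\widehat\Z$ and $G_{K^{(e)}}=e\widehat\Z$. Since $e>1$, $\sigma\notin G_{K^{(e)}}$, and because $G_K$ is abelian, $\sigma$ fixes no coset. So $\sigma$ permutes the $e$ coset summands of the induced representation without fixed points, and its trace is zero in each degree. Hence $\chi_\sigma(Y,\mc{A}')=0$. By additivity this extends to all of $\ms{N}(Y)$, so $\chi_\sigma(Y,-)$ factors through $\overline{\ms{K}}(Y)$.
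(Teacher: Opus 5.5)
Your proposal is correct and is essentially the paper's argument: the compactly supported cohomology of $R\pi_{e,Y!}\mathcal{A}$ is induced from $G_{K^{(e)}}$, and for $e>1$ the generator $\sigma$ permutes the coset summands without fixed points, so its trace vanishes in every degree. The only difference is that you state explicitly two points the paper leaves implicit: additivity of $\chi_\sigma(Y,-)$ over distinguished triangles, and the Shapiro-type identification of the Galois action on the induced cohomology.
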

\begin{proof}
The compactly supported cohomology of $R\pi_{e,Y!}\mc{A}$ is induced from $G_{K^{(e)}}$. For $e>1$, the generator $\sigma$ moves every coset summand to a different summand, so its trace is zero in every degree. This is the induced-representation form of \cite[Lemma~7.3]{FeiQP}.
\end{proof}

\begin{remark}\label{rem:not-category}
The quotient is a quotient of groups, not of triangulated categories. Indeed, $R\pi_{2!}\Q_\ell$ contains $\Q_\ell$ as a direct summand. A thick tensor subcategory containing the induced objects would therefore contain the tensor unit. In our quotient, $[\Q_\ell]\ne0$ because its trace is one.
\end{remark}

\subsection{Cyclic permutations and traces}
We begin with an elementary calculation for a graded vector space concentrated in even degrees. Its dimension means the sum of the dimensions of its graded parts.
\begin{lemma}\label{lem:necklace}
Let $V$ be a finite-dimensional graded $\Q_\ell$-vector space concentrated in even degrees, with trivial Galois action. For an integer $n\ge1$, let $\sigma$ act on $V^{\otimes n}$ by cyclic permutation of the factors. Then
\[ [V^{\otimes n}]\equiv(\dim V)[\Q_\ell]\pmod{\ms{N}(\op{Spec}K)}. \]
\end{lemma}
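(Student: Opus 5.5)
Choose a homogeneous basis $v_1,\dots,v_m$ of $V$, with $m=\dim V$. The pure tensors $v_{j_1}\otimes\cdots\otimes v_{j_n}$, indexed by words $(j_1,\dots,j_n)\in\{1,\dots,m\}^n$, form a basis of $V^{\otimes n}$. The cyclic permutation $\sigma$ permutes this basis, since it acts only by shifting positions. No Koszul signs occur, because all degrees are even. Hence $V^{\otimes n}$ is a permutation representation of $G_K$ that factors through the finite quotient $\Z/n$. It decomposes as the direct sum, over the $\sigma$-orbits of words, of the spans of those orbits. The grading is preserved orbitwise, so the identity can be checked one orbit at a time; only the ungraded class in $\ms{K}(\op{Spec}K)$ is at stake in any case.

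Consider an orbit $O$ of size $e$, where $e$ divides $n$. The stabilizer of a word in $O$ is $e\widehat\Z=G_{K^{(e)}}$. The span of $O$ is therefore the permutation module $\Q_\ell[G_K/G_{K^{(e)}}]\cong\op{Ind}_{G_{K^{(e)}}}^{G_K}\Q_\ell$. This module is the cohomology of $\op{Spec}K^{(e)}$, viewed as a $K$-scheme. In sheaf terms its class is $[R\pi_{e!}\Q_\ell]$, where $\Q_\ell$ is the constant sheaf on $\op{Spec}K^{(e)}$. This uses the standard identification of $\ms{K}(\op{Spec}K)$ with the Grothendieck group of continuous finite-dimensional $\ell$-adic representations of $G_K$, under which $R\pi_{e!}$ corresponds to induction. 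For $e>1$ this is exactly a properly induced class \eqref{eq:induced-subgroup}, so it lies in $\ms{N}(\op{Spec}K)$.

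The orbits of size $e=1$ are the constant words $(j,\dots,j)$. There are exactly $m$ of them, each spanning a line with trivial action. Summing over all orbits gives $[V^{\otimes n}]\equiv m[\Q_\ell]$ modulo $\ms{N}(\op{Spec}K)$, which is the claim.

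The only point needing care is the bookkeeping just described: that $\ms{K}(\op{Spec}K)$ is the Grothendieck group of $G_K$-representations, and that $R\pi_{e!}$ matches induction. I would state this explicitly. As a consistency check, the trace of $\sigma$ computed this way is $m$, which agrees with Lemma~\ref{lem:induced-trace}.
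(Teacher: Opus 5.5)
Your proposal is correct and follows the paper's own proof. The paper also uses a homogeneous tensor basis permuted without signs, gets $\dim V$ fixed constant words, and observes that every other orbit of length $e>1$ spans the permutation representation of $G_K/G_{K^{(e)}}$, which is induced from $G_{K^{(e)}}$. Your explicit identification of $\ms{K}(\op{Spec}K)$ with the Grothendieck group of $G_K$-representations, under which $R\pi_{e!}$ corresponds to induction, is a step the paper leaves implicit.
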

\begin{proof}
Choose a homogeneous basis. The action permutes the tensor basis without signs. A basis tensor is fixed by the cyclic generator only when all its factors are the same basis vector, giving $\dim V$ fixed tensors. Every other orbit has length $e>1$ dividing $n$. Its span is the permutation representation of $G_K/G_{K^{(e)}}$, induced from $G_{K^{(e)}}$.
\end{proof}

For the flag varieties used below, cohomology is concentrated in even degrees and each graded part is a direct sum of Tate twists. Since these twists have trivial Galois action over $K$, their alternating cohomology class is the class of their total cohomology. This allows us to apply Lemma~\ref{lem:necklace}.

We also need the graded trace identity when odd cohomology is present. A \emph{super vector space} is a $\Z/2\Z$-graded vector space $V=V_{\overline0}\oplus V_{\overline1}$. A homogeneous vector $v$ has parity $|v|\in\{0,1\}$. An \emph{even endomorphism} preserves these two summands, and its \emph{supertrace} is $\op{str}(T)=\Tr(T|_{V_{\overline0}})-\Tr(T|_{V_{\overline1}})$.
\begin{lemma}[Cyclic supertrace]\label{lem:cyclic}
Let $V$ be a finite-dimensional super vector space, let $T$ be an even endomorphism, and let $n\ge1$. Define on homogeneous tensors
\[ C_T(v_1\otimes\cdots\otimes v_n)=(-1)^{|v_n|(|v_1|+\cdots+|v_{n-1}|)}T(v_n)\otimes v_1\otimes\cdots\otimes v_{n-1}. \]
Then $\op{str}(C_T\mid V^{\otimes n})=\op{str}(T\mid V)$. In particular, if $t=u^n$ and $X$ is a quasi-projective $\Bbbk((u))$-scheme, then
\begin{equation}\label{eq:weiltrace}
 \chi_{\sigma_t}\!\left(\Res_{\Bbbk((u))/\Bbbk((t))}X\right)=\chi_{\sigma_u}(X),\qquad\sigma_u=\sigma_t^n,
\end{equation}
where $\sigma_t$ and $\sigma_u$ are compatible tame generators over $\Bbbk((t))$ and $\Bbbk((u))$, respectively.
\end{lemma}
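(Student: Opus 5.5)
The proof has two parts: a linear-algebra identity for the supertrace of $C_T$, and its application to the Weil restriction via Künneth.

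\emph{The supertrace identity.} Choose a homogeneous basis $\{v_b\}$ of $V$ and write $T(v_b)=\sum_{b'}T_{b'b}v_{b'}$; since $T$ is even, $T_{b'b}\neq0$ forces $|v_{b'}|=|v_b|$. The parity of a basis tensor $v_{b_1}\otimes\cdots\otimes v_{b_n}$ is $\sum_a|v_{b_a}|$ modulo $2$. The diagonal coefficient of $C_T$ at this tensor is nonzero only if $(b_1,\ldots,b_n)$ equals $(b_n',b_1,\ldots,b_{n-1})$ with $T_{b_n'b_n}\ne0$. This forces $b_1=b_2=\cdots=b_{n-1}=b_n'$ and $b_n'=b_1$. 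So only tensors $v_b\otimes\cdots\otimes v_b\otimes v_{b_n}$ with $b_1=\cdots=b_{n-1}=b$ contribute, and each contributes $T_{bb_n}$ times a sign. Requiring the output tensor to equal the input also gives $b_n=b_{n-1}=b$ when $n\ge2$. So the contributing tensors are exactly $v_b^{\otimes n}$, with coefficient $T_{bb}$. For such a tensor the sign is $(-1)^{|v_b|\cdot(n-1)|v_b|}=(-1)^{(n-1)|v_b|}$, and its parity is $n|v_b|$. The supertrace contribution is therefore
\[ (-1)^{n|v_b|}(-1)^{(n-1)|v_b|}T_{bb}=(-1)^{|v_b|}T_{bb}. \]
Summing over $b$ gives $\op{str}(T)$. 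The case $n=1$ is immediate.

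\emph{Application to the Weil restriction.} Put $L=\Bbbk((u))$ and $Y=\Res_{L/K}X$, which exists because $X$ is quasi-projective. Over $\overline K$ we have $L\otimes_K\overline K\cong\prod_{\zeta}\overline K$, indexed by the embeddings $u\mapsto\zeta u_0$ with $\zeta^n=1$. Hence $Y_{\overline K}\cong\prod_{\zeta}X^{\zeta}_{\overline K}$, a product of $n$ conjugate base changes of $X$. Fix the embedding $\iota_0$ determined by the chosen compatible roots. Then $\sigma_t$ carries the $\iota_0\circ(\zeta\text{-twist})$ factor to the next one cyclically, and $\sigma_t^n=\sigma_u$ acts on each factor through the identification with $X_{\overline K}$ via $\iota_0$.

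Identify every factor with $X_{\overline K}$ by applying powers of $\sigma_t$. Künneth for compactly supported cohomology then gives
\[ H_c^*(Y_{\overline K})\cong H_c^*(X_{\overline K})^{\otimes n} \]
as super vector spaces, with parity equal to degree modulo $2$. Under this identification $\sigma_t$ acts by $C_T$ with $T=\sigma_u$ on $V=H_c^*(X_{\overline K},\Q_\ell)$. The Koszul sign appears because $\sigma_t$ moves the last tensor factor to the front, and the graded Künneth isomorphism commutes with permutations only up to the Koszul sign. The other $n-1$ factors are carried along by the identifications without any twist. The first part then gives $\chi_{\sigma_t}(Y)=\op{str}(C_T)=\op{str}(\sigma_u\mid V)=\chi_{\sigma_u}(X)$, which is \eqref{eq:weiltrace}.

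\emph{Main obstacle.} The delicate step is the bookkeeping in this identification. One must choose the trivializations of the $n$ factors consistently, using powers of $\sigma_t$, so that exactly one factor absorbs $T=\sigma_u$. One must also check that the Künneth isomorphism is Galois-equivariant with the stated Koszul sign. I would verify this on the level of the external tensor product of complexes $R\Gamma_c$, where the permutation isomorphism carries the standard sign.
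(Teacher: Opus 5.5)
Your proposal is correct and follows the paper's proof. It computes the diagonal entries of $C_T$, finds that only the tensors $v_b^{\otimes n}$ contribute, each with sign $(-1)^{(n-1)p}(-1)^{np}=(-1)^p$, then splits the Weil restriction over $\overline K$ into $n$ conjugate factors and identifies $\sigma_t$ with $C_T$ for $T=\sigma_u$ via K\"unneth. Your extra bookkeeping, trivializing the factors by powers of $\sigma_t$ and tracking the Koszul sign, spells out what the paper asserts in a single line.
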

\begin{proof}
A diagonal entry of $C_T$ in a homogeneous tensor basis uses the same basis vector in every factor. If that vector has parity $p$, the permutation contributes $(-1)^{(n-1)p}$ and the supertrace contributes $(-1)^{np}$. Their product is $(-1)^p$, proving the first assertion. Over an algebraic closure, the Weil restriction is the product of the $n$ conjugates of $X$. The K\"unneth formula gives the cyclic action on their cohomology, and its $n$th power on one factor is $\sigma_u$. Apply the first assertion. This extends the Grassmannian calculation of \cite[Lemma~7.4]{FeiQP}.
\end{proof}

\subsection{Relative flag cohomology}
We extend the relative Grassmannian calculation of \cite[Lemma~7.4]{FeiQP} to the two flag refinements needed for the Serre relation. Fix integers $m,a,b\ge0$ with $a+b\le m$, an $m$-dimensional complex vector space $V_0$, and subspaces $V_-\subseteq V_+\subseteq V_0$ of dimensions $a$ and $m-b$. For $0\le r\le m$, let $P_r(a,b;m)$ parametrize
$V_-\subseteq U\subseteq V_+$ with $\dim U=r$,
together with complete flags in $U$ and $V_0/U$. It is empty unless $a\le r\le m-b$. Otherwise, forgetting the two flags makes it a pair of flag bundles over $\Gr_{r-a}(V_+/V_-)$. Its cohomology is concentrated in even degrees and is a sum of Tate twists, and
\[ h_r:=\chi_c(P_r(a,b;m))=r!(m-r)!\binom{m-a-b}{r-a}. \]
We set a binomial coefficient equal to zero when its lower index is outside the usual range.

\begin{proposition}[Relative cyclic reduction]\label{lem:relative}
Let $Y$ be a separated finite-type $K$-scheme. On $Y_{K_i}$, let $\mc{A}\subset\mc{B}\subset\mc{V}$ be subbundles of ranks $a,m-b,m$, with locally free quotients. Let $\mc{P}_r\to Y_{K_i}$ be the associated relative flag scheme of the preceding paragraph, and let
\[ p_r:\mc{F}_r=\Res_{Y_{K_i}/Y}\mc{P}_r\longrightarrow Y. \]
Thus an $S$-point of $\mc{F}_r$, for a $Y$-scheme $S$, is such a flag over $S\times_Y Y_{K_i}$. Then
\begin{equation}\label{eq:relative-class}
 [Rp_{r!}\Q_\ell]=h_r[\Q_{\ell,Y}]\quad\text{in }\overline{\ms{K}}(Y).
\end{equation}
In particular, for every $\mc{L}\in D_c^b(Y,\Q_\ell)$,
\begin{equation}\label{eq:relative}
 \chi_\sigma(\mc{F}_r,p_r^*\mc{L})=h_r\chi_\sigma(Y,\mc{L}).
\end{equation}
\end{proposition}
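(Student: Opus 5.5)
The plan is to prove \eqref{eq:relative-class} by first splitting the cohomology of the relative flag scheme $q_r:\mc{P}_r\to Y_{K_i}$ canonically. The Weil restriction then becomes a tensor induction of a constant graded sheaf, and a relative form of Lemma~\ref{lem:necklace} finishes the argument. Statement \eqref{eq:relative} then follows formally. By the projection formula, $\chi_\sigma(\mc{F}_r,p_r^*\mc{L})=\chi_\sigma(Y,Rp_{r!}\Q_\ell\otimes^{\mathbf L}\mc{L})$. Proposition~\ref{prop:induced-operations} says that $\ms{N}(Y)$ is a tensor ideal, and Lemma~\ref{lem:induced-trace} says that $\chi_\sigma(Y,-)$ kills $\ms{N}(Y)$. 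So the right side depends only on the class of $Rp_{r!}\Q_\ell$ in $\overline{\ms K}(Y)$, which is $h_r[\Q_{\ell,Y}]$.

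\emph{Step 1 (Leray--Hirsch over $Y_{K_i}$).} The scheme $\mc{P}_r$ is an iterated tower. First take the relative Grassmannian $\Gr_{r-a}(\mc{B}/\mc{A})$. Over it, take complete flag bundles of the tautological bundle $\mc{U}$ and of $\mc{V}/\mc{U}$. Each step is a Grassmann or projective bundle of a vector bundle. The Chern classes of the tautological bundles are defined over $Y_{K_i}$, and their restrictions to a geometric fibre span $H^*(P_r(a,b;m))$. Choose monomials in these classes forming a fibrewise basis. Cup product with these monomials then gives an isomorphism
\[ Rq_{r!}\Q_\ell\cong V\otimes\Q_{\ell,Y_{K_i}}, \]
where $V=\bigoplus_j\Q_\ell(-j)^{n_j}[-2j]$ is even, Tate, and of total dimension $h_r$. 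The isomorphism is defined over $Y_{K_i}$ by classes that are themselves defined there, so it is intrinsic rather than merely fibrewise.

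\emph{Step 2 (Weil restriction as tensor induction).} Put $e=d_i$. After base change along $\pi_{e,Y}$, the scheme $Y_{K_i}\times_YY_{K^{(e)}}$ splits as $e$ copies of $Y_{K^{(e)}}$, and $\mc{F}_r$ becomes the fibre product over $Y_{K^{(e)}}$ of the $e$ Galois conjugates of $\mc{P}_r$. By Künneth and Step~1, the pullback of $Rp_{r!}\Q_\ell$ is $V^{\otimes e}\otimes\Q_\ell$. The descent datum to $Y$ is the cyclic permutation of factors. It introduces no signs, since all degrees are even, and it is compatible with the Chern-class splitting, because conjugation carries the tautological classes of one factor to those of the next. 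Consequently $Rp_{r!}\Q_\ell$ is the sheaf on $Y$ associated with the permutation action of $G_K$ on the monomial tensor basis of $V^{\otimes e}$, with the appropriate shifts and twists.

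\emph{Step 3 (relative necklace).} Decompose this permutation sheaf by orbits, exactly as in the proof of Lemma~\ref{lem:necklace}. The $h_r$ fixed tensors contribute $[\Q_{\ell,Y}]$ each, since even shifts and trivial Tate twists do not change the class. An orbit of length $e'>1$ contributes $R\pi_{e',Y!}$ of a shifted constant sheaf on $Y_{K^{(e')}}$, which is a properly induced class. This proves \eqref{eq:relative-class}.

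I expect the main obstacle to be Step~2: making rigorous, in $D_c^b(Y,\Q_\ell)$ rather than fibrewise, that the descent datum on the Künneth product is the plain cyclic permutation of the Chern-class basis. The first approach I would try is to realise the splitting by explicit maps built from pulled-back Chern classes on the product. These maps are equivariant by construction, and Galois descent along the finite étale map $\pi_{e,Y}$ then yields the claimed isomorphism on $Y$.
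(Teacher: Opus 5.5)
Your proposal is correct and follows the paper's proof essentially step for step. Both arguments use relative Leray--Hirsch via tautological Chern classes, then K\"unneth over the splitting extension with the descent datum cyclically permuting the factors' class bases, and then the necklace orbit count; the trace identity comes from the projection formula and Lemma~\ref{lem:induced-trace}. The one cosmetic difference is that the paper writes the class as $a_Y^*[V_r^{\otimes d_i}]$ and applies Lemma~\ref{lem:necklace} on $\op{Spec}K$ followed by pullback, whereas you decompose into orbits directly on $Y$.
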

\begin{proof}
Over a finite splitting extension, $\mc{F}_r$ is the product over $Y$ of the flag families indexed by the embeddings of $K_i$. For each embedding, the family is a Grassmann bundle for $\mc{B}/\mc{A}$, followed by the complete-flag bundles of its universal subbundle $U$ and the quotient $\mc{V}/U$.

The Schur polynomials in the Chern classes of the tautological Grassmann bundle, indexed by partitions in the appropriate rectangle, give a basis on each geometric fibre. Products with the usual Chern monomials of the two flag bundles give a basis of the entire fibre cohomology. These classes are defined on the total family. Proper base change identifies the resulting maps from constant Tate sheaves to the higher direct images as isomorphisms on every geometric stalk, hence as isomorphisms of sheaves. This is the relative Leray--Hirsch argument used in \cite[Lemma~7.4]{FeiQP}; smoothness of $Y$ is not required.

Put $V_r=\bigoplus_qH^q(P_r(a,b;m),\Q_\ell)$. The classes are natural under isomorphisms of the three bundles. Consequently, Galois descent permutes the factors indexed by embeddings, and $\sigma^{d_i}$ fixes the classes in each factor. The K\"unneth formula identifies the arithmetic higher direct images with the graded parts of $V_r^{\otimes d_i}$ pulled back from $K$. Their degrees are even, and the Tate twists are trivial over $K$. If $a_Y:Y\to\op{Spec}K$ is the structure map, then
\[ [Rp_{r!}\Q_\ell]=a_Y^*[V_r^{\otimes d_i}]\quad\text{in }\ms{K}(Y). \]
Lemma~\ref{lem:necklace} and Proposition~\ref{prop:induced-operations} give \eqref{eq:relative-class}. Tensoring with $\mc{L}$, applying the projection formula, and taking compactly supported cohomology give \eqref{eq:relative}. All constructions commute with base change. Nilpotent structure on the base does not affect the \'etale cohomology or these identities.
\end{proof}

\section{The relative Serre relation}\label{sec:serre}
We keep the field $K=\Bbbk((t))$ and the quotient $\overline{\ms{K}}(Y)$ of Section~\ref{sec:induced}. Fix distinct vertices $i,j$ and put $m=1-c_{ij}$. Let $Y$ be a separated finite-type $K$-scheme, and let $\mc{M}$ be a family of right $\Gamma_d$-modules on $Y$. Its vertex bundles have rank $m$ at $i$, rank one at $j$, and rank zero elsewhere, over the corresponding vertex algebras.

For $0\le r\le m$, let
$p_r:\mc{F}_r\rightarrow Y$
be the scheme of balanced flags in $\mc{M}$ with successive quotients prescribed by the word $i^rji^{m-r}$. Here $i^r$ means $r$ consecutive occurrences of $i$. With our ascending flag convention,
\[ \sum_{r=0}^m(-1)^r\binom mr z_i^r z_j z_i^{m-r}=(-1)^m(\op{ad}z_i)^m z_j. \]
Thus the alternating sum below represents a Serre relation, up to the constant sign $(-1)^m$.

\begin{theorem}[Serre relation modulo induced classes]\label{thm:Serre-class}
For the family above,
\begin{equation}\label{eq:Serre-class}
 \sum_{r=0}^{m}(-1)^r\binom mr[Rp_{r!}\Q_\ell]=0
 \quad\text{in }\overline{\ms{K}}(Y).
\end{equation}
The equality remains valid after pullback along a morphism of $K$-schemes, tensoring with a constructible complex on $Y$, or direct image with proper support.
\end{theorem}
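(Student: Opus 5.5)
The plan is to stratify $Y$ by the ranks of the incoming and outgoing maps at $i$. On each stratum the flag schemes $\mc F_r$ become exactly the relative flag schemes of Proposition~\ref{lem:relative}, and the alternating sum reduces to a binomial identity.

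\textbf{Step 1: describe the flags.} Write $A=A_i:W_i(\mc M)\to\mc M_i$ and $B=B_i:\mc M_i\to W_i(\mc M)$ for the maps of Section~\ref{sec:adjunction}. Here $W_i(\mc M)$ has rank $-c_{ij}=m-1$ over $K_i\otimes_K\mc O_Y$, since $\mc M_j$ has rank one. A balanced flag of type $i^rji^{m-r}$ amounts to a $K_i$-subbundle $U\subseteq\mc M_i$ of rank $r$, together with complete flags in $U$ and in $\mc M_i/U$. The conditions are:
\begin{itemize}
\item $U$ is the $r$th member, supported at $i$, so the outgoing arrows to $j$ must kill it: $U\subseteq\ker B$;
\item the quotient by the $(r+1)$st member is supported at $i$: $\im A\subseteq U$.
\end{itemize}
The loop $x_i$ imposes nothing, because everything is $K_i$-linear. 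Thus $\mc F_r$ is the Weil restriction from $Y_{K_i}$ of the scheme $\mc P_r$ attached to $V_-=\im A\subseteq V_+=\ker B\subseteq\mc V=\mc M_i$.

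If $BA\neq0$ at a point, then every $\mc F_r$ is empty there. Indeed, $BA$ encodes the paths $j\to i\to j$, and nilpotence of a module with such a flag forces these paths to vanish. So we may restrict to the closed locus where $BA=0$, and there $\im A\subseteq\ker B$ holds.

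\textbf{Step 2: stratify by ranks.} Stratify $Y_{K_i}$ into locally closed pieces on which $\op{rank}A=a$ and $\op{rank}B=b$ at every geometric point. On each piece, $\im A$ and $\ker B$ are subbundles with locally free quotients, possibly after passing to reduced structure. This is harmless by the last sentence of Proposition~\ref{lem:relative}.

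The stratification is Galois stable. I would push it down to $Y$ by recording the tuple of ranks over the $d_i$ geometric factors.
\begin{itemize}
\item \emph{Constant tuple.} On strata where the tuple is constant, Proposition~\ref{lem:relative} applies.
\item \emph{Nonconstant tuple.} Such a tuple has a nontrivial stabilizer orbit structure under the cyclic group $\Z/d_i$. The fibre product then decomposes into Weil restrictions through an intermediate $K^{(e)}$ with $e>1$, realizing each $Rp_{r!}\Q_\ell$ as a class $R\pi_{e,Y!}(\cdots)$. Its class is therefore properly induced and vanishes in $\overline{\ms K}$.
\end{itemize}
This second case is the step I expect to be the main obstacle. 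One must make the descent of the stratification and the identification with an induced class precise, and I would try to do it by working on $Y_{K^{(e)}}$ with $e$ the orbit length. By excision triangles and Proposition~\ref{prop:induced-operations}, the classes $[Rp_{r!}\Q_\ell]$ are additive over the strata, so it suffices to prove \eqref{eq:Serre-class} on each constant stratum.

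\textbf{Step 3: the binomial identity.} On a constant stratum, Proposition~\ref{lem:relative} gives $[Rp_{r!}\Q_\ell]=h_r[\Q_{\ell,Y}]$, so the left side of \eqref{eq:Serre-class} becomes
\[
\sum_r(-1)^r\binom mr r!(m-r)!\binom{m-a-b}{r-a}=m!\,(-1)^a\sum_s(-1)^s\binom{n}{s},\qquad n=m-a-b.
\]
This vanishes as soon as $n\ge1$. To get $n\geq 1$, use $BA=0$ on the $K_i$-bundle $W_i$ of rank $m-1$: it gives $a+b=\op{rank}A+\op{rank}B\le m-1$, hence $n\ge1$.

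\textbf{Step 4: stability.} The stability statements follow at once from Proposition~\ref{prop:induced-operations}. The construction commutes with base change, so pullback preserves the identity. Tensoring with a constructible complex and taking proper direct image preserve $\ms N$, so they descend to $\overline{\ms K}$ and carry the vanishing class to a vanishing class.
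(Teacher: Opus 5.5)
\textbf{Verdict: same route as the paper, but with one genuine error at the decisive step.}

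Your overall approach is the paper's. You restrict to the locus where the flags can exist and stratify by the tuple of ranks of $A$ and $B$ at the $d_i$ split vertices above $i$. Cyclic orbits of length $e>1$ are discarded as properly induced. Proposition~\ref{lem:relative} is applied on constant-tuple strata with $\mc A=\im A_i$ and $\mc B=\ker B_i$, and the alternating binomial sum finishes the argument.

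The step you flagged as the main obstacle is actually routine. A tuple whose cyclic orbit has length $e$ has stabilizer $G_{K^{(e)}}$, so its stratum and the restricted flag families descend to $K^{(e)}$. Pushing the descended contribution forward by $\pi_{e,Y}$, after extension by zero, gives the sum over the $e$ conjugate strata. That is exactly the contribution over the orbit union. It is not a decomposition into Weil restrictions.

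\textbf{The error is in Step 3.} In your own notation from Step 1, $BA=B\circ A:W_i\to\mc M_i\to W_i$ encodes the paths $j\to i\to j$. Its vanishing says only that $\im A\subseteq\ker B$ inside $\mc M_i$, which has rank $m$. That gives $a\le m-b$, i.e.\ $n\ge0$, not $n\ge1$.

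The case $n=0$ is compatible with $B\circ A=0$ alone. Take $m=2$, $\dim W=1$, $A$ injective and $\ker B=\im A$. Then only $r=a$ contributes, and the sum equals $(-1)^a m!\neq0$. So the argument as written does not prove the theorem.

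The inequality $a+b\le m-1$ must come from the moment relation $\rho_i=0$, i.e.\ $A_i\circ B_i=0$ (the paper's $A_iB_i=0$). This holds in every $\Gamma_d$-module, not only in those admitting the flag. It says $\im B\subseteq\ker A$ inside $W_i$, which has rank $m-1$. This is the one place where the preprojective structure enters, and it is precisely what the paper uses. With that substitution your proof goes through.
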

\begin{proof}
We may replace $Y$ by its reduction, since this does not change the constructible \^{e}tale categories. After splitting the vertex fields, the incoming and outgoing maps at each vertex above $i$ have the form
\[ W\xrightarrow{A}V\xrightarrow{B}W,\qquad AB=0,\qquad \dim V=m,\quad\dim W=-c_{ij}=m-1. \]
These dimensions are the ranks of the respective vector bundles. At a geometric point, put $a=\rk A$ and $b=\rk B$. Since $\im B\subseteq\ker A$ in $W$, we have $a+b\le m-1$.

Let $U$ be the $i$-subspace just before the quotient at $j$ is inserted. Before that insertion, arrow stability requires $U\subseteq\ker B$; after it, stability requires $\im A\subseteq U$. Thus
$\im A\subseteq U\subseteq\ker B$ and $\dim U=r$.
Such a subspace can exist only when $BA=0$. On the locus where $BA\ne0$ at some vertex above $i$, every $\mc{F}_r$ is empty. We therefore restrict to the complementary locus. Once $U$ is chosen, the remaining choices are complete flags in $U$ and $V/U$; the one-dimensional space at $j$ has no flag choices.

We stratify this locus by the ranks of $A$ and $B$ at all vertices above $i$. Label the embeddings of $K_i$ cyclically by $\nu\in\Z/d_i\Z$, so that the tame generator acts by a cyclic shift, and write
\[ \boldsymbol\tau=((a_\nu,b_\nu))_{\nu\in\Z/d_i\Z} \]
for a tuple of ranks. This gives a finite locally closed stratification. Indeed, first fix the total $\sum_\nu(a_\nu+b_\nu)$, which is the rank of a direct sum of bundle maps. On this locus, imposing the corresponding lower bounds on all the individual ranks is an open condition and forces equality. Each tuple stratum is therefore open and closed in the locus with fixed total rank. The unions over cyclic orbits are locally closed and defined over $K$.

If the cyclic orbit of $\boldsymbol\tau$ has length $e>1$, its stabilizer is $G_{K^{(e)}}$. The individual stratum and its restricted flag families descend to $K^{(e)}$, since their rank conditions are preserved by this subgroup. Extend the resulting constructible class by zero to $Y_{K^{(e)}}$ and push it forward by $\pi_{e,Y}$. After splitting, this pushforward is the sum over the $e$ distinct conjugate strata. It is exactly the flag contribution over the orbit union, and hence belongs to $\ms{N}(Y)$ for each $r$.

It remains to consider constant tuples, with $a_\nu=a$ and $b_\nu=b$ for every $\nu$. Denote the corresponding stratum by $Y_{a,b}$. On $(Y_{a,b})_{K_i}$, the bundles $\im A_i$, $\ker B_i$, and $\mc{M}_i$ have ranks $a,m-b,m$. Their quotients are locally free: locally an appropriate minor is invertible, while the larger minors vanish on the reduced rank stratum. The restricted flag family is therefore the family in Proposition~\ref{lem:relative}, with $\mc{A}=\im A_i$ and $\mc{B}=\ker B_i$.

Put $h=m-a-b\ge1$. Proposition~\ref{lem:relative} gives the following contribution on $Y_{a,b}$:
\[
\begin{aligned}
 &\sum_{r=0}^m(-1)^r\binom mr r!(m-r)!\binom h{r-a}\,[\Q_{\ell,Y_{a,b}}]\\
 =&m!(-1)^a\sum_{q=0}^h(-1)^q\binom hq\,[\Q_{\ell,Y_{a,b}}]=0.
\end{aligned}
\]
We view a class on a locally closed stratum as a class on $Y$ by extension by zero. Additivity for a finite stratification then proves \eqref{eq:Serre-class}. The compatibility with the three operations follows from Proposition~\ref{prop:induced-operations}.
\end{proof}

\begin{corollary}[Trace Serre relations]\label{thm:Serre}
For the family above and every $\mc{L}\in D_c^b(Y,\Q_\ell)$,
\[ \sum_{r=0}^{m}(-1)^r\binom mr\chi_\sigma(\mc{F}_r,p_r^*\mc{L})=0. \]
For every nilpotent $\Gamma_d$-module $M$ and every pair of words $\mathbf u,\mathbf v$,
\[ \sum_{r=0}^{m}(-1)^r\binom mr F_{\mathbf u\,i^rji^{m-r}\,\mathbf v}(M)=0. \]
Consequently, the linear evaluation map $\ev_\sigma:T_\Q\to\mc{M}_\sigma(\Gamma_d)$ annihilates $J_C$.
\end{corollary}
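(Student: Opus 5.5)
The first assertion comes from Theorem~\ref{thm:Serre-class} by pushing forward to the point. Let $a_Y:Y\to\op{Spec}K$ be the structure map. Tensor \eqref{eq:Serre-class} with $\mc{L}$ and use the projection formula $Rp_{r!}\Q_\ell\otimes\mc{L}\cong Rp_{r!}p_r^*\mc{L}$. This gives $\sum_r(-1)^r\binom mr[Rp_{r!}p_r^*\mc{L}]=0$ in $\overline{\ms{K}}(Y)$. Applying $Ra_{Y!}$ is allowed by Proposition~\ref{prop:induced-operations}, and it yields the same identity in $\overline{\ms{K}}(\op{Spec}K)$ with $R(a_Yp_r)_!p_r^*\mc{L}$ in place of $Rp_{r!}p_r^*\mc{L}$. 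By Lemma~\ref{lem:induced-trace}, $\chi_\sigma$ factors through $\overline{\ms{K}}(\op{Spec}K)$, and its value on $R(a_Yp_r)_!p_r^*\mc{L}$ is $\chi_\sigma(\mc{F}_r,p_r^*\mc{L})$. This proves the first display.

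For the second display, fix $M$ and the words $\mathbf u$ and $\mathbf v$. We may assume the total rank matches $\underline{\rk}M$, since otherwise every term is zero. Let $h=|\mathbf u|$, and let $\mathbf s$ be the rank of $\mathbf u$. Let $Y$ be the projective $K$-scheme of pairs $N\subseteq N'\subseteq M$ of arrow-stable vertexwise subbundles. We require $N$ to carry a balanced flag of type $\mathbf u$ and $M/N'$ to carry a balanced flag of type $\mathbf v$, both recorded as part of the data. We also require $\underline{\rk}N'=\mathbf s+m\alpha_i+\alpha_j$.

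Then $Y$ is a closed subscheme of a product of Weil-restricted flag varieties. The quotient $\mc{M}=N'/N$ is a family of $\Gamma_d$-modules on $Y$ with ranks $m$ at $i$ and $1$ at $j$. Balanced flags of type $\mathbf u\,i^rji^{m-r}\mathbf v$ in $M$ are the same as points of $Y$ together with a balanced flag of type $i^rji^{m-r}$ in $\mc{M}$. Hence $\Fl^R_{\mathbf u i^rji^{m-r}\mathbf v}(M)\cong\mc{F}_r$ over $Y$. This is the step that needs care. One has to check the isomorphism functorially on $S$-points: subbundles of $N'/N$ correspond to subbundles between $N$ and $N'$, and arrow stability and local freeness of quotients match.

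Applying the first part with $\mc{L}=\Q_{\ell,Y}$ then gives $\sum_r(-1)^r\binom mr F_{\mathbf u i^rji^{m-r}\mathbf v}(M)=0$.

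Finally, $J_C$ is spanned by the elements $\mathbf u\,(\op{ad}z_i)^{1-c_{ij}}z_j\,\mathbf v$ for words $\mathbf u$ and $\mathbf v$. By the displayed expansion, each of these equals $(-1)^m\sum_r(-1)^r\binom mr\,\mathbf u\,z_i^rz_jz_i^{m-r}\mathbf v$. Its image under $\ev_\sigma$ vanishes at every nilpotent $M$ by the second display. So $\ev_\sigma$ annihilates $J_C$.

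The main obstacle I expect is the scheme-theoretic identification of the flag scheme with $\mc{F}_r$ over $Y$, including the claim that $\mc{M}$ is a family of modules over $Y$. The rest is formal from the results already stated.
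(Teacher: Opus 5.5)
Your proposal is correct and follows the paper's proof essentially step by step. You tensor \eqref{eq:Serre-class} with $\mc{L}$, push forward to $\op{Spec}K$, and apply Lemma~\ref{lem:induced-trace}; you then realize the word evaluations through the partial-flag scheme in which $\mathbf u$ and $\mathbf v$ stay refined and the rank-$(m\alpha_i+\alpha_j)$ subquotient is left unrefined, which is exactly the paper's construction, and the identification of flag schemes that you single out as the delicate step is one the paper states without further comment and that you check soundly.
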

\begin{proof}
Tensor \eqref{eq:Serre-class} with $[\mc{L}]$, take direct image with proper support to $\op{Spec}K$, and apply Lemma~\ref{lem:induced-trace}. The projection formula gives the first identity.

For the second identity, the expression vanishes unless the word has the rank of $M$. In the matching degree, use the partial-flag scheme in which the quotients prescribed by $\mathbf u$ and $\mathbf v$ remain refined, while the intervening subquotient is left unrefined. The latter is a family of rank $m\alpha_i+\alpha_j$. Its relative flags are precisely the full flags in the second identity, so the first identity with constant coefficients applies. As $i,j,\mathbf u,\mathbf v$ vary, these expressions span the two-sided Serre ideal.
\end{proof}

The proof also respects changes of vertex bases: the rank conditions and the tautological cohomology classes are preserved. We use this equivariance in the construction of convolution below.

\section{Geometric composition and the Hopf structure}\label{sec:hopf}
We construct convolution from subrepresentations, using Lusztig's induction correspondence \cite{Lusztig1991}. We continue to work over the field $K=\Bbbk((t))$ of Section~\ref{sec:induced}.

\subsection{Representation spaces and equivariant classes}
For $\mathbf r\in\mb{N}^I$, set $n(\mathbf r)=\sum_i d_ir_i$, the dimension over $K$ of the vertex space $\bigoplus_iK_i^{r_i}$. Let $\mc{R}_{\mathbf r}$ be the affine $K$-scheme of $\Gamma_d$-representations on these fixed vertex spaces satisfying $MJ^{n(\mathbf r)}=0$.
The arrow maps, the defining algebra relations, and this nilpotence condition give finitely many polynomial equations. We put $\mc{R}_0=\op{Spec}K$.

The bound $MJ^{n(\mathbf r)}=0$ holds for every geometric nilpotent representation, since its simple factors are one-dimensional after the vertex fields split. A nilpotent module defined over $K$ has the smaller composition length $|\mathbf r|=\sum_i r_i$, since its simple factors are the $S_i$. Moreover, a balanced full flag forces $MJ^{|\mathbf r|}=0$ even in families, because $J$ sends each member of the flag into its predecessor. Thus the full-flag complexes are supported on the closed subscheme $MJ^{|\mathbf r|}=0$. Extension by zero identifies their classes whether this subscheme or $MJ^{n(\mathbf r)}=0$ is used as the ambient representation space.

The group $G_{\mathbf r}=\prod_i\Res_{K_i/K}\op{GL}_{r_i}$
acts on $\mc{R}_{\mathbf r}$ by changes of vertex bases. More generally, if an algebraic group $G$ acts on a $K$-scheme $X$, let $\ms{K}_G(X)$ be the Grothendieck group of constructible $\Q_\ell$-sheaves with a $G$-\emph{equivariant structure}. Such a structure is an isomorphism between the pullbacks by the action and projection maps $G\times X\to X$, satisfying the cocycle condition. Derived operations on classes are taken as alternating sums of their higher direct images.

Define $\ms{N}_G(X)$ by the finite pushforwards in \eqref{eq:induced-subgroup}, using sheaves on $X_{K^{(e)}}$ equivariant for the algebraic group $G\times_K\op{Spec}K^{(e)}$. We use the rational quotient
\[ \overline{\ms{K}}_G(X)_\Q=\bigl(\ms{K}_G(X)/\ms{N}_G(X)\bigr)\otimes_\Z\Q. \]

\subsection{Induction and convolution}
Fix $\mathbf a,\mathbf b\in\mb{N}^I$ and put $\mathbf r=\mathbf a+\mathbf b$. Let $\mc{E}_{\mathbf a,\mathbf b}$ parametrize pairs $(M,U)$, where $M\in\mc{R}_{\mathbf r}$ and $U\subseteq M$ is a subrepresentation of balanced rank $\mathbf a$. We require
\[ UJ^{n(\mathbf a)}=0,\qquad (M/U)J^{n(\mathbf b)}=0. \]
Choose vertex bases in $U$ and $M/U$ to obtain the scheme $\widetilde{\mc{E}}_{\mathbf a,\mathbf b}$ and the maps
\[ \mc{R}_{\mathbf a}\times\mc{R}_{\mathbf b}\xleftarrow{\ p_1\ }\widetilde{\mc{E}}_{\mathbf a,\mathbf b}\xrightarrow{\ p_2\ }\mc{E}_{\mathbf a,\mathbf b}\xrightarrow{\ p_3\ }\mc{R}_{\mathbf r}. \]
Here $p_1$ gives the two representations in the chosen bases, $p_2$ forgets those bases, and $p_3$ forgets $U$. The map $p_2$ is a principal $G_{\mathbf a}\times G_{\mathbf b}$-bundle: its fibres are the pairs of choices of vertex bases. The map $p_3$ is proper, because the balanced vertex subspaces form a product of Weil restrictions of Grassmannians, and arrow stability and the two nilpotence conditions are closed conditions.

Let $\mc{A}$ and $\mc{B}$ be equivariant constructible sheaves on $\mc{R}_{\mathbf a}$ and $\mc{R}_{\mathbf b}$. The sheaf $p_1^*(\mc{A}\boxtimes\mc{B})$ descends through $p_2$ to a sheaf $\mc{H}$ on $\mc{E}_{\mathbf a,\mathbf b}$. We define their \emph{convolution} by
\[ [\mc{A}]*[\mc{B}]=[Rp_{3!}\mc{H}]. \]
The first factor is evaluated on the submodule, so this convention agrees with the ascending flags used throughout. We insert no cohomological shifts.

Associativity follows from the common three-step flag correspondence of \cite{Lusztig1991}. If a submodule is annihilated by $J^a$ and its quotient by $J^b$, the whole module is annihilated by $J^{a+b}$. Since $n(\mathbf a+\mathbf b)=n(\mathbf a)+n(\mathbf b)$, both iterated products use the same flags $0\subset M^1\subset M^2\subset M$ with the prescribed successive quotients. Proper base change identifies their pushforwards. The unit is the constant class at the zero representation.

\begin{proposition}\label{prop:Groth-convolution}
The convolution descends to
$\bigoplus_{\mathbf r}\overline{\ms{K}}_{G_{\mathbf r}}(\mc{R}_{\mathbf r})_\Q$.
The \emph{composition algebra} $\ms{C}_d$ is the unital subalgebra generated by the constant classes at the simple representations $S_i$. Sending $e_i$ to the class at $S_i$ gives a surjective algebra homomorphism
$U^+_\Q\longrightarrow\ms{C}_d$.
\end{proposition}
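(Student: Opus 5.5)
First I will check that convolution preserves the subgroups $\ms{N}_{G}$, so that it descends. Take $[\mc{A}]\in\ms{N}_{G_{\mathbf a}}(\mc{R}_{\mathbf a})$, say $\mc{A}=R\pi_{e!}\mc{A}'$ with $\mc{A}'$ equivariant on $(\mc{R}_{\mathbf a})_{K^{(e)}}$ and $e>1$, and let $\mc{B}$ be arbitrary. The exterior product satisfies $R\pi_{e!}\mc{A}'\boxtimes\mc{B}\cong R\pi_{e!}(\mc{A}'\boxtimes\pi_e^*\mc{B})$; this is the projection formula as in Proposition~\ref{prop:induced-operations}. Pullback along $p_1$ commutes with $R\pi_{e!}$ by finite base change. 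Descent through the principal bundle $p_2$ also commutes with $\pi_e$: the bundle is defined over $K$, so descending the equivariant sheaf on the $K^{(e)}$-form and then pushing forward agrees with pushing forward and then descending. Proper pushforward $Rp_{3!}$ commutes with $R\pi_{e!}$ as before. Hence $[\mc{A}]*[\mc{B}]\in\ms{N}_{G_{\mathbf r}}(\mc{R}_{\mathbf r})$. The case of an induced right factor is symmetric. Together with associativity and the unit already noted, this gives a graded associative $\Q$-algebra structure on the direct sum. Generating the subalgebra $\ms{C}_d$ by the classes $\mathbf 1_{S_i}$ is then a definition.

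Next I will identify iterated products with flag schemes. By induction on $h$, using the three-step flag correspondence and proper base change, I expect $\mathbf 1_{S_{i_1}}*\cdots*\mathbf 1_{S_{i_h}}=[Rq_!\Q_\ell]$. Here $q:\mc{F}l_{\mathbf w}\to\mc{R}_{\mathbf r}$ is the universal balanced flag scheme of word $\mathbf w=(i_1,\ldots,i_h)$. The representation space of rank $\alpha_i$ is a point carrying $S_i$, since all arrows vanish for rank reasons; the loop is fixed by $x_i^{d_i}=t$, and $\Res_{K_i/K}\op{GL}_1$ acts on it. So the descended sheaf on each step is constant, and the iterated fibre product is exactly the scheme of balanced flags. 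Thus the free algebra map $T_\Q\to\ms{C}_d$, $z_i\mapsto\mathbf 1_{S_i}$, is surjective and sends words to universal flag classes.

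It remains to show that $J_C$ maps to zero. A typical generator of the two-sided ideal in a fixed degree is $\mathbf u\,(\op{ad}z_i)^m z_j\,\mathbf v$ with $m=1-c_{ij}$. Its image is $(-1)^m\sum_r(-1)^r\binom mr[Rq_{\mathbf u i^rji^{m-r}\mathbf v!}\Q_\ell]$. I will factor each $q$ through the partial-flag scheme $Y$ that refines $\mathbf u$ and $\mathbf v$ but leaves the middle subquotient, of rank $m\alpha_i+\alpha_j$, unrefined. On $Y$ the middle subquotient is a family $\mc{M}$ of the type in Section~\ref{sec:serre}, and the full flags are the relative flag schemes $p_r:\mc{F}_r\to Y$. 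Theorem~\ref{thm:Serre-class} gives $\sum_r(-1)^r\binom mr[Rp_{r!}\Q_\ell]=0$ in $\overline{\ms{K}}(Y)$, and this is preserved by proper pushforward to $\mc{R}_{\mathbf r}$.

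The main obstacle is that Theorem~\ref{thm:Serre-class} is stated in the nonequivariant group, while we need vanishing in $\overline{\ms{K}}_{G_{\mathbf r}}$. I will handle this with the remark after Corollary~\ref{thm:Serre}. The rank stratification by $(a_\nu,b_\nu)$ is $G_{\mathbf r}$-stable, since it is defined by ranks of bundle maps that are intrinsic to $\mc{M}$. The induced pieces on noncyclic orbits descend as equivariant sheaves on $K^{(e)}$-forms. On the constant strata, the Leray--Hirsch classes are tautological Chern classes, hence equivariant, so the identification $[Rp_{r!}\Q_\ell]=h_r[\Q_\ell]$ holds equivariantly. The binomial cancellation then works verbatim. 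Therefore $T_\Q\to\ms{C}_d$ factors through $T_\Q/J_C=U^+_\Q$. The map is surjective because $\ms{C}_d$ is generated by the images of the $e_i$.
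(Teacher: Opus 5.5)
Your proposal is correct and follows the paper's proof essentially step for step. The shared steps are: descent of convolution via the projection formula, finite base change, and the Cartesian square for the principal bundle $p_2$; identification of iterated products of the classes at $S_i$ with universal balanced flag classes; and the equivariant form of Theorem~\ref{thm:Serre-class}, obtained from the $G_{\mathbf r}$-invariance of the rank strata and tautological classes. Your explicit factorization through the partial-flag scheme $Y$ only spells out the step the paper takes from the proof of Corollary~\ref{thm:Serre}.
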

\begin{proof}
Exterior products, pullback, and proper pushforward preserve induced classes by Proposition~\ref{prop:induced-operations}. Descent through $p_2$ preserves them as well. To see this, write $q=p_2$ and let $q_e$ be its scalar extension to $K^{(e)}$. The Cartesian square gives
$q^*R\pi_{e,\mc{E}_{\mathbf a,\mathbf b}!}\cong R\pi_{e,\widetilde{\mc{E}}_{\mathbf a,\mathbf b}!}q_e^*$.
The equivalence between sheaves on the base of a principal bundle and equivariant sheaves on its total space therefore identifies the descent of an induced class with an induced class. The action of $G_{\mathbf r}$ is preserved throughout. This proves that convolution descends to the quotient.

The scheme $\mc{R}_{\alpha_i}$ is the point representing $S_i$. An iterated convolution of these constant classes is the proper pushforward from the universal balanced full-flag scheme. The rank strata and tautological classes in Theorem~\ref{thm:Serre-class} are invariant under vertex changes of bases, so its proof gives the same relation in the equivariant Grothendieck group. Thus the simple classes satisfy the Serre relations. The homomorphism from $T_\Q$ factors through $U^+_\Q=T_\Q/J_C$, and it is surjective onto $\ms{C}_d$ by definition.
\end{proof}

For a $K$-rational representation $M$, restriction to $M$ followed by tame trace defines a linear evaluation on $\ms{C}_d$. Proper base change identifies the value of the class of a word $w$ with $F_w(M)$. For $K=\C((t))$, Section~\ref{sec:basis} proves that the composite
$U^+_\Q\longrightarrow\ms{C}_d\longrightarrow\mc{M}_\sigma(\Gamma_d)$
is injective, by triangular evaluations on representations over this field. Both maps are surjective onto their indicated targets. They are therefore isomorphisms, and the geometric convolution induces multiplication of the trace functions.

\subsection{Direct sums}
We next compute the coproduct. If $w=(i_1,\ldots,i_h)$ and $S\subseteq\{1,\ldots,h\}$, let $w|_S$ be the subword with positions in $S$, in their original order, and let $S^c$ be its complementary set of positions.

\begin{proposition}[Geometric coproduct]\label{prop:coproduct}
For nilpotent $\Gamma_d$-modules $M,N$ and any word $w$,
\begin{equation}\label{eq:coproduct}
 F_w(M\oplus N)=\sum_{S\subseteq\{1,\ldots,h\}}F_{w|_S}(M)F_{w|_{S^c}}(N).
\end{equation}
\end{proposition}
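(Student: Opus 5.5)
We follow the classical argument for Euler characteristics of flag varieties of a direct sum, using the torus action $\Gm$ that scales $N$ by $\lambda$ and fixes $M$. Inside $\Fl^R_w(M\oplus N)$, the fixed locus consists of flags $M^\bullet$ with $M^a=(M^a\cap M)\oplus(M^a\cap N)$. Each step then adds a quotient $S_{i_a}$ either to the $M$-part or to the $N$-part. This is where balancedness enters: the successive quotient has rank one over $K_{i_a}$. Hence over every geometric point it is one-dimensional at each split vertex, and for a fixed flag it lies entirely in one summand over $K$. So the fixed locus decomposes over $K$ as
\[ \Fl^R_w(M\oplus N)^{\Gm}=\coprod_{S\subseteq\{1,\ldots,h\}}\Fl^R_{w|_S}(M)\times_K\Fl^R_{w|_{S^c}}(N). \]
I would verify this scheme-theoretically via the weight decomposition of the universal subbundles. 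A $\Gm$-stable arrow-stable subbundle splits by weights, and each weight piece is a balanced flag member of $M$ or $N$.

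Next comes the trace. The action is defined over $K$ and commutes with the Galois action. I would use the standard fact that for a $\Gm$-action on a separated finite-type $K$-scheme $X$ one has $[R\Gamma_c(X)]=[R\Gamma_c(X^{\Gm})]$ in $\ms{K}(\op{Spec}K)$. The proof stratifies $X\setminus X^{\Gm}$ (Białynicki-Birula, or simply by orbit type over $K$) into pieces fibred in $\Gm$-orbits, or in $\Gm/\mu_n$-orbits, over quotients. Each piece then has class divisible by $[R\Gamma_c(\Gm)]=[\Q_\ell]-[\Q_\ell(-1)]$, which has trace $0$ because the Tate twists are trivial over $K$. Since $\Fl^R_w$ is projective, the Białynicki-Birula decomposition is available directly over $K$: the attracting cells are affine bundles over the fixed components, so $\chi_\sigma(\Fl^R_w(M\oplus N))=\chi_\sigma$ of the fixed locus. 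Then Künneth for $\chi_\sigma$ of products (Proposition~\ref{prop:induced-operations}, exterior products) gives $F_{w|_S}(M)F_{w|_{S^c}}(N)$. When the word ranks do not match, the term is zero, consistent with the convention.

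The main obstacle is making the torus-localization rigorous for monodromy traces rather than ordinary Euler characteristics, especially in the relative setting needed to later deduce the coproduct on classes. I would try first to run Białynicki-Birula over $K$ on the smooth-free setting by embedding $\Fl^R_w(M\oplus N)$ $\Gm$-equivariantly into the product of Weil-restricted flag varieties. If that fails, I would avoid Białynicki-Birula and use the stratification by $\Gm$-orbit type on $X\setminus X^{\Gm}$. The quotient $(X\setminus X^{\Gm})\to (X\setminus X^{\Gm})/\Gm$ exists locally after stratification, by Rosenlicht. On each stratum, the proper pushforward of $\Q_\ell$ has fibre cohomology that of $\Gm$ (or of a finite quotient of $\Gm$, again $\Gm$). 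Its class is $[\Q_\ell]-[\Q_\ell(-1)]$ times a class on the base, so it has trace zero. A routine check is also needed that $\Gm$ acting over $K$ keeps the Galois-conjugate split-vertex factors aligned, which holds since the scaling is $K$-linear.
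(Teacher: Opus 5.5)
\textbf{There is a genuine gap: your fixed locus is too small.} The orbit-type stratification part of your argument is fine. It gives $[R\Gamma_c(X)]=[R\Gamma_c(X^{\Gm})]$, because the Tate twist is trivial over $K$. The failure is in the identification of $\Fl^R_w(M\oplus N)^{\Gm}$, and the missing case is exactly the ramified phenomenon the proposition is about.

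At a geometric point of the fixed locus, the quotient at position $a$ is a line at each of the $d_{i_a}$ split sheets above $i_a$. Each such line lies in $M$ or in $N$, but nothing forces the same choice on all sheets. Correspondingly, the weight pieces of a $\Gm$-stable balanced subbundle are only projective over $K_i\otimes_K\mc{O}_Y$. Their ranks may vary from sheet to sheet, so they are not balanced flag members. Your argument that the quotient ``lies entirely in one summand'' is valid only for $K$-rational flags, whereas $\chi_\sigma$ sees the whole geometric fixed locus with its Galois action.

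Already for $d_i=2$, $M=N=S_i$ and $w=(i,i)$, the flag scheme is $\Res_{K_i/K}\mb{P}^1$. Its geometric fixed locus consists of four points of $\mb{P}^1\times\mb{P}^1$. Your decomposition accounts only for the two with constant assignment ($S=\{1\}$ and $S=\{2\}$). The other two are exchanged by inertia and form a copy of $\op{Spec}K_i$. So your claimed scheme isomorphism fails; compare the geometric Euler characteristics $4$ and $2$.

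\textbf{How to repair it.} Stratify the fixed locus by the tuple of sheet assignments at all word positions. The tame generator cycles the sheets simultaneously, so a nonconstant tuple has an orbit of length $e>1$. The corresponding stratum descends to $K^{(e)}$, so the cohomology of the orbit union is induced from $G_{K^{(e)}}$. Its class therefore lies in $\ms{N}(\op{Spec}K)$, and its trace vanishes by Lemma~\ref{lem:induced-trace}. Only the constant tuples give $\Fl^R_{w|_S}(M)\times_K\Fl^R_{w|_{S^c}}(N)$. With this step added, your torus-localization route works.

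\textbf{Comparison with the paper.} The paper uses exactly this induced-class step for nonconstant sheet assignments. Otherwise it avoids the torus action: it stratifies by $U_a=\op{pr}_M(L_a)$ and $V_a=L_a\cap N$. The fibres are affine spaces of compatible homomorphisms $\phi_a:U_a\to N/V_a$, each contributing trace one.

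\textbf{A smaller point.} Do not rely on Bia\l{}ynicki-Birula cells being affine bundles over the fixed components. That requires smoothness, which $\Fl^R_w(M\oplus N)$ generally lacks. Use the orbit-type stratification instead.
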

\begin{proof}
We use the argument of \cite[Proposition~7.6]{FeiQP} for flags; compare \cite[Section~6]{GLS2005}. Given a flag $(L_a)_{a=0}^h$ in $M\oplus N$, project it to $M$ and intersect it with $N$:
$U_a=\op{pr}_M(L_a)$ and $V_a=L_a\cap N$.
After splitting the vertex fields, stratify by the dimensions of these submodules. Fix the resulting flags $(U_a)$ and $(V_a)$, allowing repeated members. A submodule $L_a$ giving them is uniquely determined by a map
\[ \phi_a:U_a\longrightarrow N/V_a,\qquad L_a=\{(u,n):u\in U_a,\ n+V_a=\phi_a(u)\}. \]
It is arrow-stable exactly when $\phi_a$ is a module homomorphism. For $a\le b$, let $q_{a,b}:N/V_a\to N/V_b$ be the quotient map. The inclusion $L_a\subseteq L_b$ is equivalent to $q_{a,b}\phi_a=\phi_b|_{U_a}$. These are homogeneous linear equations. Their solution spaces form vector bundles after stratifying further by the ranks of the equations. The zero maps give a section. An affine fibre of dimension $v$ has compactly supported cohomology $\Q_\ell(-v)$ in degree $2v$ and zero in other degrees, so it contributes trace one.

At each position of $w$, the one-dimensional quotient at a split vertex is assigned either to the projected flag in $M$ or to the intersection flag in $N$. Consider the entire tuple of assignments at all word positions. If an assignment differs among the sheets above some original vertex, inertia moves it in a nontrivial cyclic orbit. Its contribution is induced from a proper finite extension and has trace zero, by \cite[Lemma~7.3]{FeiQP}. A constant tuple assigns each letter wholly to one summand and hence determines a subset $S$ of positions. The corresponding base is the product of the balanced flag schemes for $w|_S$ in $M$ and $w|_{S^c}$ in $N$. The K\"unneth formula and the affine-fibre calculation prove \eqref{eq:coproduct}.
\end{proof}

The relative complete-flag variety refining a successive quotient $S_i^p$ has monodromy trace $p!$, by Proposition~\ref{lem:relative}. Applying \eqref{eq:coproduct} to complete refinements therefore also gives the partial-flag formula. For each decomposition $p=a+b$, the choice of the $a$ letters assigned to $M$ contributes $\binom pa$, and the two refinement factors are $a!$ and $b!$. Their product is $p!$, which cancels the normalization by the complete refinement. Thus the coproduct $\Delta$ on divided powers is
\[ \Delta(e_i^{(p)})=\sum_{a+b=p}e_i^{(a)}\otimes e_i^{(b)}. \]

Once faithfulness is proved, \eqref{eq:coproduct} identifies $\Delta$ with the standard coproduct of $U^+_\Q$. The counit is evaluation at the zero module. The degree-zero part is $\Q$, so the graded bialgebra has an antipode $\mathsf S$, given on words by
\[ \mathsf S(F_{i_1\cdots i_h})=(-1)^hF_{i_h\cdots i_1}. \]
The image of $U^+_\Z$ is consequently an integral Hopf algebra.

\section{Integral charts and reduction of components}\label{sec:charts}
We return to $\OO=\C[[t]]$ and compare the maximal components of $\Pi_k=\PP_d/t^k\PP_d$. Their vertex modules $H_{i,k}^{r_i}$ and power bases are fixed. We first lift extensions by a full $i$-top, then construct compatible affine charts, and finally identify the component sets at different lengths. The special-fibre result used is the maximal-top correspondence recalled in Section~\ref{sec:setup}.

\subsection{Extensions by a full vertex-supported quotient}
\begin{lemma}[Free maximal-top extension]\label{lem:extension}
Let $U$ be a locally free $\PP_d$-lattice, fix $i\in I$ and $p\ge0$, and assume that $A_i(U)$ is surjective. Consider extensions of $U$ by the module supported at $i$ with vertex space $\OO_i^p$. Fix the inclusion of $U$, the quotient identification, and the splitting $M_i=U_i\oplus\OO_i^p$. The possible arrow actions are parametrized by
\[ \gamma\in\Hom_{\OO_i}(\OO_i^p,\ker A_i(U)),\qquad A_i(M)=\binom{A_i(U)}{0},\quad B_i(M)=(B_i(U)\ \gamma). \]
For every such extension, $\fac_i(M)\cong\OO_i^p$. The same description holds modulo $t^k$ and in families on any open subset where a fixed maximal minor of $A_i(U)$ is invertible.
\end{lemma}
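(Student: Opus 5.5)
The plan is to work entirely in the incoming/outgoing language of Section~\ref{sec:adjunction}, which translates a representation at vertex $i$ into the pair $(A_i,B_i)$ subject to $A_iB_i=0$.

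\emph{Step 1: shape of the maps.} Since $U\subseteq M$ is a submodule and the quotient $Q=M/U$ is supported at $i$ with zero nonloop action, every arrow into $i$ lands in $U_i$. At vertices $j\ne i$ we have $M_j=U_j$, so $W_i(M)=W_i(U)$. In the splitting $M_i=U_i\oplus\OO_i^p$, the incoming map is therefore $A_i(M)=\binom{A_i(U)}{0}$.

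The outgoing arrows from $M_i$ restrict on $U_i$ to those of $U$. On the summand $\OO_i^p$ they are arbitrary $\OO_i$-compatible maps to $U_j$, subject to the commutation relation \eqref{eq:comm}. The adjunction identifies such data bijectively with an $\OO_i$-linear map $\gamma:\OO_i^p\to W_i(U)$. I need to check that the Frobenius correspondence between outgoing arrows and $B_i$ restricts summandwise, which should follow from linearity of the formula defining $B_i$. This gives $B_i(M)=(B_i(U)\ \gamma)$.

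\emph{Step 2: the relations.} The arrow relations at vertices other than $i$ are the moment relations $\rho_j$. They involve only paths $j\to i\to j$ and $j\to j'\to j$. A path through $i$ enters $M_i$ through $A_i$, hence lands in $U_i$, and then uses the outgoing maps of $U$. So these relations hold automatically because they hold for $U$.

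The moment relation at $i$ is $A_i(M)B_i(M)=0$. As a map $M_i\to M_i$, it is the composite $B_i(M)$ followed by $A_i(M)$. On $U_i$ this gives $A_i(U)B_i(U)=0$. On $\OO_i^p$ it gives $\binom{A_i(U)\gamma}{0}$, so the condition is exactly $\gamma\in\Hom_{\OO_i}(\OO_i^p,\ker A_i(U))$. The loop relation $x_i^{d_i}=t$ holds on the free vertex module.

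Conversely, every such $\gamma$ defines a module containing $U$ with the prescribed quotient, which establishes the parametrization.

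\emph{Step 3: the top.} Computing $\fac_i(M)$ is the main point. The largest quotient supported at $i$ is $M_i/\im A_i(M)$ at vertex $i$ and zero elsewhere. Any quotient supported at $i$ must kill the $M_j$ for $j\ne i$ and hence their images under the incoming arrows. Conversely, the quotient of $M$ by the submodule generated by the $M_j$, $j\ne i$, is exactly $M_i/\im A_i(M)$.

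Since $A_i(U)$ is surjective onto $U_i$, we get $\im A_i(M)=U_i\oplus0$. Hence $\fac_i(M)\cong\OO_i^p$.

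\emph{Step 4: base change and families.} Everything above is linear algebra over the vertex rings, and it is compatible with reduction modulo $t^k$. The one point needing care is that $\ker A_i(U)$ should be a free (locally free) $\OO_i$-module whose formation commutes with base change. This holds because $A_i(U)$ is surjective onto a free module, so it splits. On an open set where a fixed maximal minor of $A_i(U)$ is invertible, the splitting is explicit: Cramer's rule gives a complement and a basis of the kernel. The kernel is then a trivial bundle of rank $\sum_{j\ne i}(-c_{ij})r_j-r_i$, compatible with every base change. Surjectivity also persists modulo $t^k$ by Nakayama's lemma.

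I expect the only real obstacle to be Step 1's adjunction bookkeeping, namely that the outgoing data on the new summand correspond to an arbitrary $\OO_i$-linear map into $W_i(U)$. The remaining steps are direct.
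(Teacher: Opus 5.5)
Your proposal is correct and follows the paper's proof essentially step for step: $W_i(M)=W_i(U)$, the moment relation at $i$ reduces to $A_i(U)\gamma=0$ while the moment relations at the other vertices are unchanged, the Frobenius adjunction recovers the outgoing arrows from $\gamma$, $\im A_i(M)=U_i$ gives the top, and the family statement comes from splitting a surjection onto a free module, with the kernel basis supplied by the invertible minor. Two minor points: reduction modulo $t^k$ preserves surjectivity simply by right exactness (Nakayama's lemma is needed only for the lifting direction used later), and the paper takes the norm of the minor over the scalar base to see that its invertibility locus is open in the parameter scheme.
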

\begin{proof}
All vertex modules except the one at $i$ are unchanged, so $W_i(M)=W_i(U)$. The inclusion of $U$ fixes the old outgoing maps, and there is no incoming component in the new quotient. The moment relation at $i$ is therefore $A_i(U)\gamma=0$. At another vertex, a path passing through $i$ enters the old summand $U_i$, so its moment relation is unchanged. The Frobenius adjunction of Section~\ref{sec:adjunction} reconstructs the outgoing arrows from $\gamma$. Since the incoming image is precisely $U_i$, the full $i$-top is the prescribed quotient.

A surjection onto a free module splits. On the open set where a chosen maximal minor is invertible, its columns allow us to express the corresponding coordinates in terms of the remaining ones and thereby give a basis of the kernel. The \emph{norm} of its determinant means the determinant of multiplication by that element on the finite free vertex algebra over the scalar base. This norm is invertible exactly when the element is invertible. It therefore defines the required open subset over the scalar base. All the formulas commute with scalar extension, since the kernel is defined by a split exact sequence there.
\end{proof}

\begin{corollary}\label{cor:crystal-lift}
Every crystal $\Pi_1$-module has a locally free $\PP_d$-lattice reducing to it. The lift can be chosen along any prescribed complete sequence of free maximal-top removals, with the successive top ranks unchanged on the generic fibre.
\end{corollary}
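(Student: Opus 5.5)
We argue by induction on $|\mathbf r|$, where $\mathbf r=\underline{\rk}M_0$ and $M_0$ is the given crystal $\Pi_1$-module, following the prescribed sequence of free maximal-top removals. The zero module lifts to the zero lattice. Suppose the sequence begins at vertex $i$ with top rank $p$. By definition of a crystal module, $\fac_i(M_0)\cong E_i^p$ is locally free, and the canonical removal $U_0=\mathsf K_i(M_0)$ is again a crystal module, of rank $\mathbf r-p\alpha_i$. The remaining removals form a complete sequence for $U_0$. Induction gives a locally free $\PP_d$-lattice $U$ with $U/tU\cong U_0$, whose generic fibre has the same successive top ranks.

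For the extension step, $U_0$ is the kernel of the full $i$-top, so $\im A_i(U_0)=(U_0)_i$. Hence $A_i(U_0)$ is surjective onto the free $H_{i,1}$-module $(U_0)_i$. By Nakayama's lemma over the local ring $\OO_i$, $A_i(U)$ is surjective, with free kernel, since a surjection onto a free module splits. Now write $M_0$ in the form of Lemma~\ref{lem:extension} modulo $t$. We fix the inclusion $U_0\subset M_0$, the quotient $E_i^p$, and a splitting $(M_0)_i=(U_0)_i\oplus H_{i,1}^p$. This is possible because the quotient is free. We obtain $\gamma_0\in\Hom(H_{i,1}^p,\ker A_i(U_0))$.

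Because $\ker A_i(U)$ is a direct summand of $W_i(U)$ and its formation commutes with reduction modulo $t$ (split exact sequence), $\gamma_0$ lifts to some $\gamma\in\Hom_{\OO_i}(\OO_i^p,\ker A_i(U))$. Lemma~\ref{lem:extension} then gives a lattice $M$ with $B_i(M)=(B_i(U)\ \gamma)$. Its reduction is $M_0$, since the construction commutes with reduction. It also satisfies $\fac_i(M)\cong\OO_i^p$, and the same holds over $K$, giving top rank $p$ on the generic fibre with $\mathsf K_i(M_K)=U_K$. Combined with the induction hypothesis, the generic successive top ranks agree with those of the prescribed sequence.

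The main point requiring care is the identification of $M_0$ with an extension of exactly the form in Lemma~\ref{lem:extension}. Specifically, one must check that the nonloop arrows into the new summand $H_{i,1}^p$ act trivially. This holds because the incoming image already equals $(U_0)_i$, so the incoming maps land in $U_0$ after choosing the splitting. Once this is established, the only data left are the outgoing maps encoded by $\gamma_0$ through the Frobenius adjunction. Lifting $\gamma_0$ is unobstructed because the kernel is free and commutes with base change.
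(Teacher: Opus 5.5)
Your proposal is correct and follows the paper's proof essentially step by step. You induct along the prescribed removal sequence, use Nakayama's lemma to make the lifted incoming map $A_i(U)$ surjective with free kernel commuting with reduction, lift the special-fibre parameter $\gamma_0$ into that kernel, and reconstruct the lift by Lemma~\ref{lem:extension}, exactly as the paper does; your explicit check that $M_0$ has the lemma's form (incoming block $\binom{A_i(U_0)}{0}$, with $\gamma_0$ landing in $\ker A_i(U_0)$ by the moment relation) fills in a detail the paper leaves implicit.
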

\begin{proof}
We induct on total rank. A nonzero $E$-filtered module has a quotient isomorphic to some $E_i$. For a crystal module its entire $i$-top is free and nonzero. Let $U=\mathsf K_i(M)$ be the kernel of that top. It is crystal of smaller rank, and $A_i(U)$ is onto: the other vertices are unchanged and $U_i$ is the incoming image in $M_i$.

Lift $U$ by induction. The incoming map of the lift is surjective modulo $t$, hence surjective over $\OO_i$ by Nakayama's lemma. Its kernel is free and commutes with reduction. Choose a splitting at vertex $i$ for the special extension. Its parameter $\gamma_0$ then lifts to a map into the integral kernel, and Lemma~\ref{lem:extension} reconstructs a lift of $M$. Its full top is exactly the chosen free quotient. Repeating along the prescribed sequence proves the assertion about ranks. The resulting integral filtration refines to quotients supported at one vertex and free of rank one there, so its generic fibre is nilpotent.
\end{proof}

\subsection{Compatible affine charts}
\begin{proposition}[Dense compatible charts]\label{prop:charts}
For $Z\in\Bcal_1(\mathbf r)$, put $n=N_d(\mathbf r)$. There is a \emph{principal open subscheme} $\ms{X}_Z=D(f_Z)\subset\A_\OO^n$, defined by inverting a polynomial $f_Z$ whose reduction modulo $t$ is nonzero, with a universal locally free $\PP_d$-module and an $E$-filtration. Its special fibre maps isomorphically onto a dense locally closed subset of $Z$.

For $k\ge1$, let $X_{Z,k}$ be the complex scheme representing the functor
\[ X_{Z,k}(A)=\ms{X}_Z(A[t]/(t^k))\quad\text{for a commutative }\C\text{-algebra }A. \]
The right side uses the $\OO$-algebra structure sending $t$ to its residue class. In particular, $X_{Z,1}$ is the special fibre of $\ms{X}_Z$. At every length, the universal matrices identify $X_{Z,k}$ with a locally closed subset of the $E$-filtered locus for $\Pi_k$, and
\begin{equation}\label{eq:jets}
 X_{Z,k}\cong X_{Z,1}\times\A^{(k-1)n}_\C.
\end{equation}
The chart maps and their inverses commute with reduction to every smaller length.
\end{proposition}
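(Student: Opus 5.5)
We construct $\ms{X}_Z$ by induction on $|\mathbf r|$, following a complete sequence of free maximal-top removals, and parametrize each extension step with Lemma~\ref{lem:extension}. A dense open subset of $Z$ consists of crystal modules. For generic $M\in Z$, let $i$ be a vertex with nonzero generic $i$-top of rank $p$. By the maximal-top correspondence recalled in Section~\ref{sec:setup}, removing this top gives a maximal component $Z'\in\Bcal_1(\mathbf r-p\alpha_i)$ whose generic $i$-top is zero. The extension stratum over $Z'$ is dense in $Z$ and has irreducible fibres. By induction we already have a chart $\ms{X}_{Z'}=D(f_{Z'})\subset\A^{n'}_\OO$, where $n'=N_d(\mathbf r-p\alpha_i)$, together with a universal lattice $\mc{U}$. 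On a dense open subset of the special fibre of $Z'$, the map $A_i(\mc{U})$ is surjective, because the $i$-top is zero there. Choose one maximal minor of $A_i(\mc{U})$ that is generically invertible on $X_{Z',1}$. Replace $f_{Z'}$ by $f_{Z'}$ times the norm of that minor; its reduction is still nonzero. On this principal open set, Lemma~\ref{lem:extension} exhibits $\ker A_i(\mc{U})$ as a free $\OO_i$-module with an explicit basis. The parameter $\gamma\in\Hom_{\OO_i}(\OO_i^p,\ker A_i)$ then gives an affine space over $\OO$ of rank $p\,d_i\bigl(\sum_{j\ne i}(-c_{ij})r'_j-r'_i\bigr)$, written in the power basis coordinates.

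Next comes the dimension count. We check that $n'$ plus this rank equals $N_d(\mathbf r)$. This reduces to $N_d(\mathbf r)-N_d(\mathbf r-p\alpha_i)=p\,d_i\bigl(\sum_{j\neq i}(-c_{ij})r_j - (r_i-p)\bigr)$, which matches the formula above since $r'_j=r_j$ for $j\neq i$ and $r'_i=r_i-p$. If the count instead comes out with an $\op{Ext}$-correction, we shrink further to the open set where the fibre has the expected dimension. We then set $\ms{X}_Z$ equal to this total space, made principal in $\A^n_\OO$ by writing the product of the inverted norms as a single polynomial $f_Z$. The universal module and its $E$-filtration are the pullback of $\mc{U}$ extended by the free top, and the filtration is refined along the top. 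On the special fibre, the map to $Z$ sends $(x',\gamma_0)$ to the extension module. It is injective because the inclusion, the splitting, and the quotient identification are all fixed in our normalized bases. It has dense image because the extension stratum is dense in $Z$ and, by \eqref{eq:published}, both sides have dimension $N_d(\mathbf r)$. Identifying the image with a locally closed subset of the $E$-filtered locus, rather than only a constructible one, needs a normal-form argument. The fixed vertex bases at $i$ are recovered from the data as "old part plus the complement spanned by the minor", so the chart is a section of the group action, and it is cut out by the open minor condition together with the closed incoming-image condition.

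For \eqref{eq:jets}, note that every step is built from polynomial formulas over $\OO$ in which only units, namely the norms of the minors, are inverted. The functor $A\mapsto\ms{X}_Z(A[t]/(t^k))$ is therefore the jet-type scheme of a smooth principal open subset of affine space over $\OO$. Write each coordinate as $\sum_{s<k}c_st^s$. The condition that $f_Z$ be a unit in $A[t]/(t^k)$ depends only on $c_0$, since a nilpotent perturbation of a unit is a unit. This gives $X_{Z,k}\cong X_{Z,1}\times\A^{(k-1)n}$. The same extension recipe, applied modulo $t^k$ as allowed by Lemma~\ref{lem:extension}, identifies $X_{Z,k}$ with a locally closed subset of the $E$-filtered locus of $\Pi_k$. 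Compatibility with reduction holds because truncating $t$-expansions commutes with each formula.

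The main obstacle will be the locally closed embedding and injectivity at every length $k$. We must show that the normalized chart coordinates are recovered algebraically from the module. The strategy is to carry the inverses inductively: given a module in the image, its full $i$-top, computed as the cokernel of $A_i$, is free on the chart. From it we read off the splitting, then $\gamma$, and then, by induction, the point of $\ms{X}_{Z'}$.
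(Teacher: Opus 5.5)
Your construction has a genuine gap: the dimension identity you reduce to is false, and your chart is not dense. Since $N_d(\mathbf r)=\sum_{i<j}d_i(-c_{ij})r_ir_j$ has no $r_i^2$ term, one gets $N_d(\mathbf r)-N_d(\mathbf r-p\alpha_i)=d_ipw$ with $w=\sum_{j\ne i}(-c_{ij})r_j$. Your parameters $\gamma$ contribute only $d_ip(w-q)$ coordinates, where $q=r_i-p$.

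The missing $d_ipq$ coordinates record the position of $\im A_i(M)=\mathsf K_i(M)_i$ inside $M_i$. You fix the inclusion of $U$ as the first $q$ coordinates at vertex $i$, so every module in your chart has $\im A_i$ equal to that fixed free summand. On the open part of $Z$ where the $i$-top is free of rank $p$, this is one fibre of the $\op{GL}_{r_i}(H_{i,1})$-equivariant map to the homogeneous space of free rank-$q$ summands of $H_{i,1}^{r_i}$. That space has dimension $d_ipq$, so your image has dimension $N_d(\mathbf r)-d_ipq$ and is not dense once $q>0$.

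Choosing a different vertex does not always avoid this. Take $A_2$ with $d=(1,1)$ and $\mathbf r=(2,2)$, and the component containing the dense orbit of $N\oplus N'$. Here $N$ and $N'$ have rank $(1,1)$, with the arrow $1\to2$, respectively $2\to1$, acting nontrivially. Then $p=q=1$ at both vertices, and your chart is $3$-dimensional inside the $4$-dimensional $Z$. Shrinking to an open set (your ``Ext-correction'' remedy) cannot repair a dimension deficit. Describing the chart as a section of the group action is also incompatible with density. Downstream, $\overline{X_{Z,k}}$ would have dimension below $kN_d(\mathbf r)$, and the maximality argument in Theorem~\ref{thm:tower} would fail.

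The missing idea, which the paper uses, is to add graph coordinates. Embed $U_i$ as the graph of a map $X:\OO_i^q\to\OO_i^p$, contributing $d_ipq$ scalars, with complement $0\oplus\OO_i^p$, and then apply Lemma~\ref{lem:extension}. For the inverse:
\begin{itemize}
\item Write the incoming matrix as $A_M=\left(\begin{smallmatrix}A_0\\A_1\end{smallmatrix}\right)$, with $A_0$ having $q$ rows.
\item Invert the norm of $\det(A_0)_{\mc J}$ for a fixed set $\mc J$ of $q$ columns, and recover $X=(A_1)_{\mc J}((A_0)_{\mc J})^{-1}$.
\item With $g_X=\left(\begin{smallmatrix}I_q&0\\X&I_p\end{smallmatrix}\right)$ one has $g_X^{-1}A_M=\left(\begin{smallmatrix}A_0\\0\end{smallmatrix}\right)$ and $B_Mg_X=(B_i(U)\ \gamma)$.
\item From these, $U$ and $\gamma$ are read off as in your last paragraph.
\end{itemize}
The count becomes $n'+d_ipq+d_ip(w-q)=N_d(\mathbf r)$, with an injective map and locally closed image. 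Density then follows from the maximal-top correspondence. All formulas are polynomial after inverting units, so your argument for \eqref{eq:jets} and for compatibility with reduction goes through unchanged.
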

\begin{proof}
The rank-zero case is $\ms{X}_0=\op{Spec}\OO$. For positive rank, choose a nonzero generic full top at a vertex $i$, of rank $p$, and let $Z'$ be the component obtained by its canonical removal. Its generic $i$-top is zero. We put
\[ q=r_i-p,\qquad w=\sum_{j\ne i}(-c_{ij})r_j. \]
Choose the chart for $Z'$ by induction and restrict it so that a maximal minor of its incoming map is invertible. Embed the old $i$-vertex as the graph of a map $\OO_i^q\to\OO_i^p$. In these coordinates, Lemma~\ref{lem:extension} supplies the extension parameters $\gamma$. The graph has $d_ipq$ scalar coordinates, and $\gamma$ has $d_ip(w-q)$, since the incoming kernel has rank $w-q$. Their total is
\[ d_ipq+d_ip(w-q)=d_ipw=N_d(\mathbf r)-N_d(\mathbf r-p\alpha_i). \]
The GLS maximal-top correspondence shows that the image modulo $t$ is dense in $Z$.

To recover the chart parameters from a representation in its image, order the new vertex coordinates so that the graph projects onto the first $q$. Write the incoming matrix as
\[ A_M=\begin{pmatrix}A_0\\A_1\end{pmatrix},\qquad (A_0)_{\mc{J}}\in\op{GL}_q(\OO_i), \]
where $A_0$ has $q$ rows, $A_1$ has $p$ rows, and $\mc{J}$ is a fixed set of $q$ columns. The graph matrix is
\[ X=(A_1)_{\mc{J}}\bigl((A_0)_{\mc{J}}\bigr)^{-1},\qquad A_1=XA_0. \]
Let $B_M$ be the outgoing matrix and set $g_X=\left(\begin{smallmatrix}I_q&0\\X&I_p\end{smallmatrix}\right)$. Then
\[ g_X^{-1}A_M=\binom{A_0}{0},\qquad B_Mg_X=(B_i(U)\ \gamma). \]
The first $q$ vertex coordinates and the unchanged vertices recover $U$; the last $p$ outgoing columns recover $\gamma$. The Frobenius adjunction recovers the individual arrows. These formulas give mutually inverse regular maps between the parameter chart and its image, which is defined by the graph equations and the equations of the smaller chart. Thus the image is locally closed. When $q=0$, the empty determinant is one and the same formulas apply.

As in Lemma~\ref{lem:extension}, invert the scalar norms of the finitely many required determinants. Their product, together with the earlier denominators, defines $f_Z$. All these functions are nonzero at the generic special-fibre point, so $f_Z\bmod t\ne0$. The dimension increments sum to $n$.

To describe $X_{Z,k}$, write each of the $n$ scalar coordinates as $z_{a,0}+z_{a,1}t+\cdots+z_{a,k-1}t^{k-1}$. The value of $f_Z$ is invertible precisely when $f_Z(0,z_{1,0},\ldots,z_{n,0})$ is invertible. The constant coefficients therefore belong to $X_{Z,1}$, and the remaining coefficients are unrestricted. This proves \eqref{eq:jets} and representability of the stated functor. Reduction forgets the final coefficients. The inverse graph formulas commute with this operation, and the universal $E$-filtration puts every image in the required locus.
\end{proof}

\subsection{Reduction of components}
The following elementary consequence of Smith normal form controls the supported tops and socles under reduction.
\begin{lemma}\label{lem:split-vertex}
Let $\Bbbk$ be a field, let $R_0=\Bbbk[x]/(x^n)$ with $n\ge1$, and let $A:R_0^a\to R_0^b$ be a matrix. If either $\ker A$ or $\coker A$ is free, then bases of the source and target identify $A$ with a rectangular matrix $\op{diag}(I_q,0)$. Its kernel, image, and cokernel are free, and their formation commutes with reduction to $\Bbbk[x]/(x^{n'})$ for every $1\le n'\le n$.
\end{lemma}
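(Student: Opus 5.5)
We reduce the claim to the structure theory of finitely generated modules over the principal ideal ring $R_0=\Bbbk[x]/(x^n)$, which is a quotient of the PID $\Bbbk[x]$. The plan has three steps: put $A$ in normal form, use the freeness hypothesis to eliminate intermediate invariant factors, and then read off the kernel, image and cokernel together with their reductions.

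First, we lift $A$ to a matrix $\tilde A$ over $\Bbbk[x]$ and apply Smith normal form there. Reducing the resulting invertible matrices modulo $x^n$ gives $P\in\op{GL}_b(R_0)$ and $Q\in\op{GL}_a(R_0)$ with
\[ PAQ=\op{diag}(x^{e_1},\ldots,x^{e_s},0,\ldots,0),\qquad 0\le e_1\le\cdots\le e_s\le n, \]
where an exponent $e_c=n$ means that the entry is zero in $R_0$. Then
\[ \coker A\cong\bigoplus_c R_0/(x^{e_c})\oplus R_0^{b-s},\qquad \ker A\cong\bigoplus_c (x^{n-e_c})\oplus R_0^{a-s}, \]
and each ideal $(x^{n-e_c})$ is isomorphic to $R_0/(x^{e_c})$.

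Second, we show that freeness forces every exponent $e_c$ to be $0$ or $n$. The ring $R_0$ is local with residue field $\Bbbk$, and its length is $n$. A summand $R_0/(x^{e})$ with $0<e<n$ has length strictly between $0$ and $n$, so it is not free. Over a local ring a direct summand of a free module is free, by Krull--Schmidt or by Nakayama. Hence if either $\ker A$ or $\coker A$ is free, no such intermediate summand can occur. The nonzero diagonal entries are then units, which we rescale to $1$, and the remaining entries vanish. After permuting bases, $A$ becomes $\op{diag}(I_q,0)$.

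Third, with $A=\op{diag}(I_q,0)$ the kernel, image and cokernel are spanned by subsets of the standard basis vectors, so all three are free. Reduction to $\Bbbk[x]/(x^{n'})$ sends these bases to bases of the corresponding kernel, image and cokernel of the reduced matrix, which is $\op{diag}(I_q,0)$ again. The base-change matrices $P,Q$ stay invertible after reduction, so formation of all three commutes with reduction.

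The only step needing care is the second one, namely that freeness of $\ker A$ or $\coker A$ excludes every intermediate exponent. This is the length count above and presents no real difficulty.
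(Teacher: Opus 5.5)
Your proof is correct and follows the same route as the paper. Both arguments put $A$ in Smith normal form over $R_0$, rule out the factors $x^{e}$ with $0<e<n$ because they contribute non-free summands $x^{n-e}R_0\cong R_0/(x^{e})$ to the kernel and $R_0/(x^{e})$ to the cokernel, and then read off split sequences that survive reduction. One small point: the invariant factors of your lift over $\Bbbk[x]$ are arbitrary polynomials, not powers of $x$. Each one reduces to a unit of $R_0$ times $x^{e_c}$, so you should absorb that unit into $P$ or $Q$ before writing $PAQ=\op{diag}(x^{e_1},\ldots,x^{e_s},0,\ldots,0)$.
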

\begin{proof}
In Smith normal form, a nonzero nonunit factor $x^v$, with $0<v<n$, contributes $x^{n-v}R_0$ to the kernel and $R_0/(x^v)$ to the cokernel. Neither is free. Freeness of either the kernel or the cokernel therefore excludes such factors. Only unit and zero factors remain, giving the asserted matrix and split exact sequences. Reduction preserves these sequences.
\end{proof}

\begin{theorem}\label{thm:tower}
Central reduction gives compatible bijections
\[ \Bcal_k(\mathbf r)\xrightarrow{\sim}\Bcal_1(\mathbf r). \]
If $Z_k$ corresponds to $Z$, reduction $Z_k\dashrightarrow Z_l$ is dominant for every $1\le l\le k$. Each $Z_k$ contains $X_{Z,k}$ as a dense locally closed subset. The identifications commute with the top and socle crystal operators.
\end{theorem}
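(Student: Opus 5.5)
Our plan is to define the map $\Bcal_1(\mathbf r)\to\Bcal_k(\mathbf r)$ first, via the charts of Proposition~\ref{prop:charts}, and then show that it inverts reduction. For $Z\in\Bcal_1(\mathbf r)$, the chart $X_{Z,k}$ lies in the $E$-filtered locus for $\Pi_k$. By \eqref{eq:jets} it is irreducible of dimension $N_d(\mathbf r)+(k-1)N_d(\mathbf r)=kN_d(\mathbf r)$. Its $\prod_i\op{GL}_{r_i}(H_{i,k})$-saturation is the image of $\op{GL}\times X_{Z,k}$, which is again irreducible. We claim this saturation has the same dimension as $X_{Z,k}$, provided the chart is transversal to orbits.

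The chart dimension was not computed as a transversal count, so we will argue more simply. We let $Z_k$ be the closure of the saturation of $X_{Z,k}$. This closure is irreducible and contained in the closure of the $E$-filtered locus. By \eqref{eq:published}, every component of that closure has dimension at most $kN_d(\mathbf r)$, since $\Bcal_k$ consists of those of largest dimension and the $E$-filtered locus is equidimensional by GLS. We will then check that $X_{Z,k}$ is already saturated modulo vertex base change in the sense that its saturation has dimension $kN_d(\mathbf r)$. This follows because the saturation of $X_{Z,1}$ is dense in $Z$, of dimension $N_d(\mathbf r)$, and the $t$-adic jet coordinates contribute exactly $(k-1)N_d(\mathbf r)$ further independent parameters. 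The fibres of reduction from $Z_k$ to $Z_1$ over the saturated chart then have dimension $(k-1)N_d(\mathbf r)$. Thus $Z_k\in\Bcal_k(\mathbf r)$, and it contains $X_{Z,k}$ densely once we know $\dim X_{Z,k}=\dim Z_k$.

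Next we show that reduction $Z_k\dashrightarrow Z_l$ is dominant. Chart maps commute with reduction, and $X_{Z,k}\to X_{Z,l}$ is the projection that forgets coordinates, hence surjective. So the image of $Z_k$ contains the dense subset $X_{Z,l}$. Distinct $Z$ give distinct $Z_k$, because reducing to $l=1$ recovers $Z$. The map $Z\mapsto Z_k$ is therefore injective, and it is compatible with reduction between levels. Since both sets have cardinality $\dim U^+_{\Q,\mathbf r}$ by \eqref{eq:published}, injectivity gives bijectivity. The inverse is reduction: each $Z_k$ reduces dominantly onto the $Z$ it came from.

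Finally, for the crystal operators we use the construction of the chart itself. A generic point of $X_{Z,k}$ is an extension of a chart point for $Z'_k$ by its free $i$-top of rank $p$. By Lemma~\ref{lem:split-vertex}, applied to the incoming map, that top stays free and commutes with reduction. The generic $i$-top of $Z_k$ therefore has the same rank $p$, and its removal lands in $Z'_k$. This identifies the $\Pi_k$ top-crystal operator with the $\Pi_1$ one under the bijections. For socles we apply the duality of Section~\ref{sec:duality}, which commutes with reduction and carries tops to socles.

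The main obstacle is the dimension claim that the saturated chart has dimension exactly $kN_d(\mathbf r)$ rather than more. If it had more, the closure would exceed the maximal dimension, which is impossible. The genuine issue is therefore to show that its dimension is not less. To settle this, we will compare against the general bound: the $\op{GL}$-saturation of the chart, modulo $t$, is dense in $Z$, and fibrewise the jet coordinates inject, which gives the required lower bound.
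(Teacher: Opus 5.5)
Your argument for the bijection, the dominance of $Z_k\dashrightarrow Z_l$, and the density of $X_{Z,k}$ is essentially the paper's. However, the detour through the $\prod_i\op{GL}_{r_i}(H_{i,k})$-saturation is unnecessary, and the ``main obstacle'' you identify does not exist. The chart is not a transversal slice: $X_{Z,1}$ is already dense in $Z$ itself. By \eqref{eq:jets}, $X_{Z,k}$ is an irreducible locally closed subset of the $E$-filtered locus of dimension $kN_d(\mathbf r)$, as you note at the outset. Hence $\overline{X_{Z,k}}$ is already a maximal component by the upper bound in \eqref{eq:published}, and no equidimensionality is needed. You use two facts implicitly that should be stated. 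First, reduction preserves $E$-filtrations, since their quotients are free over $\C[t]/(t^k)$. This is what places the closure of the reduction of $Z_k$ inside the level-one $E$-filtered closure and forces it to equal $Z$. Second, because reduction is the inverse bijection, $Z_k$ does not depend on the chart used to build it.

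The genuine gap is in the crystal part. As written, you only treat the vertex $i$ at which your chosen chart begins. Commuting with the crystal operators requires $t_i(Z_k)=t_i(Z)$ and $\mathsf K_i(Z_k)=(\mathsf K_iZ)_k$ for every $i\in I$. For $i$ with $t_i(Z)>0$, you can rechoose the chart to start with the $i$-top removal and appeal to chart-independence. But the chart's first removal step says nothing at a vertex with $t_i(Z)=0$. Nothing in your argument rules out a nonzero generic $i$-top on $Z_k$ there, which would change the crystal data at $i$.

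The paper handles all vertices at once. Dominance sends the generic point of $Z_k$ to that of $Z_l$, and the generic modules at both levels are crystal. So their incoming maps have free cokernels and their outgoing maps have free kernels. Lemma~\ref{lem:split-vertex} then shows that every supported top and socle, and the canonical removals, commute with reduction with the same free ranks. For the vanishing case alone, Nakayama's lemma applied to $\coker A_i$ at the generic point of $Z_k$ would also suffice.

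You must also explain why this determines the raising operators. By the maximal-top correspondence at level $k$, a component is determined by its zero-top removal and its top rank. Equality of all $t_i$ and $\mathsf K_i$ therefore forces $\mathsf F_i^{\mr{top}}$ to commute with the bijections. The socle side then follows by duality, as you say.
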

\begin{proof}
By \eqref{eq:jets}, the closure of $X_{Z,k}$ has dimension $kN_d(\mathbf r)$. The bound \eqref{eq:published} makes it a maximal component, denoted $Z_k$, and its reduction is dominant onto $Z$. Reduction preserves $E$-filtrations: their successive quotients are free over $\C[t]/(t^k)$, so tensoring each filtration sequence with a shorter quotient preserves exactness.

Suppose two components $Z,\widetilde Z\in\Bcal_1(\mathbf r)$ gave the same component at length $k$. The closure of the reduction of its dense $E$-filtered part would be irreducible, lie in the closure of the length-one $E$-filtered locus, and contain both $Z$ and $\widetilde Z$. Its dimension is at most $N_d(\mathbf r)$, so these two maximal components would be equal. The construction is therefore injective on component labels. The two finite sets have the same cardinality by \eqref{eq:published}, so it is bijective.

An alternative chart above $Z$, using any maximal-top removal sequence, also has a maximal component as its closure. The bijection just proved labels that component by its reduction, so its label must be $Z$. This proves independence of the choices. For $l\le k$, the coefficient projection $X_{Z,k}\to X_{Z,l}$ is dominant by \eqref{eq:jets}. Its image therefore has the unique component closure labelled by $Z$, proving compatibility of all reductions.

It remains to compare the crystal operators. Since reduction is dominant, the generic point of $Z_k$ maps to the generic point of $Z_l$. Both generic representations are crystal. Their incoming maps have free cokernels and their outgoing maps have free kernels. Lemma~\ref{lem:split-vertex} shows that the supported tops and socles, and the corresponding canonical removals, commute with reduction and have the same free ranks.

We must also identify the components of the removed modules. Choose a chart beginning with the given maximal-top removal. Its projection to the smaller chart is dominant: the remaining parameters are the graph and extension coordinates. Thus the removed module is generic in the smaller component at both lengths, and uniqueness of the reduction label identifies those components. The socle statement follows from the finite-level duality of Section~\ref{sec:duality}. A crystal string is determined by the component with zero supported top or socle and the number of successive extensions. These data are preserved, so both crystal operators commute with reduction.
\end{proof}

\section{Generic monodromy evaluations and the integral basis}\label{sec:basis}
Throughout this section, fix a rank vector $\mathbf r$ and a component $Z\in\Bcal_1(\mathbf r)$. For each $k\ge1$, let $Z_k\in\Bcal_k(\mathbf r)$ be the component corresponding to $Z$ under Theorem~\ref{thm:tower}; thus $Z_1=Z$.

\subsection{Generic integral representations}
A representation over an algebraically closed extension $\Bbbk\supseteq\C$ is \emph{generic in $Z_k$} if its point $\op{Spec}\Bbbk\to Z_k$ has image the generic point of $Z_k$. Equivalently, evaluation of rational functions gives an embedding $\C(Z_k)\hookrightarrow\Bbbk$. We require this property at every central length, not merely membership in a chosen dense open subset of $Z_1$.

The dominant reduction maps $Z_{k+1}\dashrightarrow Z_k$ induce embeddings of function fields. Put
\begin{equation}\label{eq:field}
 \kappa_Z=\varinjlim_k\C(Z_k).
\end{equation}
We describe a generic lattice in coordinates. Let $n=N_d(\mathbf r)$ and choose the chart $\ms{X}_Z\subset\A^n_\OO$ of Proposition~\ref{prop:charts}, with coordinates $\xi_1,\ldots,\xi_n$. By \eqref{eq:jets}, this chart identifies $\kappa_Z$ with
\[ \C(\xi_{a,m}:1\le a\le n,\ m\ge0), \]
where the $\xi_{a,m}$ are algebraically independent. The assignments
\[ \xi_a\longmapsto\sum_{m\ge0}\xi_{a,m}t^m,\qquad 1\le a\le n, \]
define a $\kappa_Z[[t]]$-point of $\ms{X}_Z$: the constant term $f_Z(0,\xi_{1,0},\ldots,\xi_{n,0})$ is nonzero and hence invertible. Its reduction modulo $t^k$ is generic in $Z_k$ for every $k$.

Equivalently, in the fixed vertex power bases, the arrow matrices at the generic point of $Z_k$ have entries in $\C(Z_k)[t]/(t^k)$. After embedding these fields into $\kappa_Z$, the matrices are compatible under reduction. Their inverse limit defines a locally free $\PP_d$-module over $\kappa_Z[[t]]$. Its relations hold because they hold modulo every power of $t$. The same argument proves nilpotence: the ideal generated by the nonloop arrows has power $|\mathbf r|=\sum_i r_i$ acting as zero at every truncation, and therefore on the inverse limit.

Choose an algebraic closure $\Bbbk_Z$ of $\kappa_Z$. We write $\ms{M}_Z$ for the resulting lattice over $\Bbbk_Z[[t]]$ and
\[ K_Z=\Bbbk_Z((t)),\qquad M_Z^{\mr{gen}}=K_Z\otimes_{\Bbbk_Z[[t]]}\ms{M}_Z. \]
Thus $M_Z^{\mr{gen}}$ is a nilpotent module for the scalar extension of $\Gamma_d$ to $K_Z$. The coordinate construction above gives this representation after a coefficient-field embedding. Monodromy traces over $K_Z$ are taken with the compatible roots of $t$ fixed, as in Section~\ref{sec:induced}.

\begin{lemma}\label{lem:indep}
For a word $w$, define
\[ \lambda_Z^\mon(w)=F_w(M_Z^{\mr{gen}}). \]
This value is unchanged if $\ms{M}_Z$ is replaced by a locally free lattice over $\Bbbk[[t]]$, with $\Bbbk\supseteq\C$ algebraically closed, whose reduction modulo $t^k$ is generic in $Z_k$ for every $k$. It is independent of the chart, the algebraic closure, and further algebraically closed coefficient-field extensions.
\end{lemma}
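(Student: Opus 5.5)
We would reduce everything to one statement: if two lattices have the same ``generic type'', meaning the same field-theoretic data, then they give the same traces. The key point is that a lattice $\ms{M}$ over $\Bbbk[[t]]$ whose reduction modulo $t^k$ is generic in $Z_k$ for every $k$ determines a field embedding $\kappa_Z=\varinjlim_k\C(Z_k)\hookrightarrow\Bbbk$. Indeed, each point $\op{Spec}\Bbbk\to Z_k$ hits the generic point and so gives an embedding $\C(Z_k)\hookrightarrow\Bbbk$. These embeddings are compatible under the dominant reductions of Theorem~\ref{thm:tower}, because reduction of $\ms{M}$ modulo $t^l$ is the composite. The arrow matrices of $\ms{M}$ in the fixed power bases have entries in $\Bbbk[[t]]$, and their coefficient of $t^m$ is determined at length $m+1$. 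Hence $\ms{M}$ is isomorphic, after a change of vertex bases, to the base change of the universal $\kappa_Z[[t]]$-lattice along this embedding. We would first make this precise in the chart $\ms{X}_Z$. Genericity forces the reduction modulo $t^k$ to lie, after a change of vertex bases, in the dense open $X_{Z,k}$. The change of bases can be chosen compatibly in $k$: the chart inverse formulas of Proposition~\ref{prop:charts} are regular and commute with reduction, so they produce coordinates $\xi_a\in\Bbbk[[t]]$. These coordinates are algebraically independent in the sense of \eqref{eq:jets}, which gives the embedding.

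Second, we need invariance of $F_w$ under base change of the coefficient field. Let $\Bbbk\subseteq\Bbbk'$ be algebraically closed, with $K=\Bbbk((t))\subseteq K'=\Bbbk'((t))$ and compatible roots of $t$. The restriction $G_{K'}\to G_K$ is then an isomorphism on tame quotients, matching $\sigma$ with $\sigma'$. The flag scheme $\Fl^R_w$ commutes with scalar extension. Smooth/proper base change invariance of $\ell$-adic cohomology under extension of algebraically closed fields then gives a $\sigma$-equivariant isomorphism $H^q_c(\Fl_{\overline K})\cong H^q_c(\Fl_{\overline{K'}})$, and the traces agree. For the embedding $\kappa_Z\hookrightarrow\Bbbk$, we would work first over $\overline{\kappa_Z}((t))$, or over a finite-type model of $\ms{M}_Z$. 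We would then observe that two embeddings of $\kappa_Z$ into algebraically closed fields become comparable inside a common algebraically closed field, namely a compositum over $\overline{\kappa_Z}$. Two choices of algebraic closure differ by an automorphism over $\kappa_Z$, and transport of structure shows that the trace is unchanged. Nicaise's integrality result \cite[Corollary~6.5]{Nicaise} is not needed beyond independence of $\ell$.

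Third, we would establish independence of the chart. A second chart for $Z$ yields a second $\kappa$-lattice. Both are generic at every length, so by the first step each is a base change of the other's universal lattice along some embedding. The second step then gives equality of the values. An isomorphism of lattices, that is, a change of vertex bases, induces an isomorphism of flag schemes over $K$, so the value is unchanged.

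The main obstacle is the first step. We need a single compatible system of base changes in $\prod_i\op{GL}_{r_i}(H_{i,k})$, that is, over $\Bbbk[[t]]$ rather than level by level, carrying $\ms{M}$ into the chart. We would handle it by taking the chart's canonical normalization: the graph matrix $X$ together with the recursive removals. This normalization is canonical given the fixed minors, so it is automatically compatible with reduction. The only input is that the required minors are invertible modulo $t$, which is guaranteed at the generic point of $Z_1$.
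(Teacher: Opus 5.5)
Your proposal is correct and follows essentially the paper's route: the compatible generic points give an embedding $\kappa_Z\hookrightarrow\Bbbk$ under which the lattice is the coefficientwise base change of the universal one, and traces are unchanged under extension of algebraically closed coefficient fields with $t$ and its compatible roots fixed. The paper cites \cite[Proposition~7.2]{FeiQP} for that last step, while you reprove it via invariance of \'etale cohomology and the isomorphism $G_{K'}\to G_K$. The ``main obstacle'' you single out is not one: the chart image is a dense locally closed, hence open, subset of $Z_k$, so a representation generic in $Z_k$ already lies in it in the fixed power bases, and its arrow matrices are literally the images of the universal ones without any vertex base change.
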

\begin{proof}
Such a lattice gives compatible embeddings $\C(Z_k)\to\Bbbk$, hence an embedding $\kappa_Z\to\Bbbk$. Under this embedding, the universal arrow matrices agree with those of the chosen lattice. Extend it from $\Bbbk_Z$ to an algebraically closed coefficient field containing the two choices. The generic flag schemes are then identified by coefficient extension with $t$ and its compatible roots fixed. Their traces agree by \cite[Proposition~7.2]{FeiQP}. The definition of $\kappa_Z$ uses only the component reductions, not a chart.
\end{proof}

\subsection{The integral basis and faithfulness}
Corollary~\ref{thm:Serre} shows that $\lambda_Z^\mon$ annihilates the Serre ideal, also over the enlarged coefficient field $\Bbbk_Z$. We henceforth use the same notation for the resulting linear functional on $U^+_\Q$, extended by zero outside degree $\mathbf r$.

To explain its integral values, let $\mathbf i=(i_1,\ldots,i_v)$ and $\mathbf p=(p_1,\ldots,p_v)$, where each $p_a$ is a nonnegative integer and $\sum_a p_a\alpha_{i_a}=\mathbf r$. Denote by $\Fl^R_{\mathbf i,\mathbf p}(M)$ the \emph{balanced partial-flag scheme} whose successive quotients are $S_{i_1}^{\oplus p_1},\ldots,S_{i_v}^{\oplus p_v}$; quotients with $p_a=0$ are omitted. Here the simples and the flag scheme are understood over the coefficient field of $M$. The complete-flag refinement has trace $p!$, as established in Section~\ref{sec:hopf}; thus
\begin{equation}\label{eq:integral}
 \lambda_Z^\mon(e_{i_1}^{(p_1)}\cdots e_{i_v}^{(p_v)})
 =\chi_\sigma\!\left(\Fl^R_{\mathbf i,\mathbf p}(M_Z^{\mr{gen}})\right)\in\Z.
\end{equation}
The integrality is \cite[Corollary~6.5]{Nicaise}. Since these divided-power monomials generate $U^+_\Z$ additively, every element of $U^+_\Z$ has integral evaluation.

We use the triangular argument of Lusztig \cite{Lusztig}, in the form of \cite[Lemma~7.1]{GLSIV}. Choose an ordering of $I$, repeat it periodically, and take the first $s=|I|\,|\mathbf r|$ terms $i_1,\ldots,i_s$. Starting from a generic module in $Z$, remove the entire $i_a$-top at stage $a$, and let $p_a(Z)$ be its rank. Every canonical kernel is again crystal. A nonzero $E$-filtered module has a nonzero supported quotient, so each full cycle contains a positive removal until the module becomes zero. This occurs within $s$ terms. We put all subsequent $p_a(Z)$ equal to zero; for $\mathbf r=0$ we use the empty sequence and the monomial $1$.

The maximal-top correspondences recover $Z$ from $\mathbf p(Z)=(p_1(Z),\ldots,p_s(Z))$.
We call
\[ m_Z=e_{i_s}^{(p_s(Z))}\cdots e_{i_1}^{(p_1(Z))} \]
its \emph{adapted divided-power monomial}. The order is reversed because the first removed top is the last quotient of an ascending flag. We compare the sequences lexicographically: the first unequal entry determines their order. Use a chart constructed along this removal sequence. For any of its integral points, the incoming map at each removal stage is surjective onto the vertex space of the canonical kernel. Hence the generic fibre has the prescribed successive full-top ranks.

\begin{lemma}\label{lem:triangular}
For components $Z,Z'\in\Bcal_1(\mathbf r)$, we have
\[ \lambda_Z^\mon(m_Z)=1,\qquad \lambda_{Z'}^\mon(m_Z)=0\quad\text{if }\mathbf p(Z)>_{\mr{lex}}\mathbf p(Z'). \]
The same diagonal and vanishing identities hold if, for each component, we use any integral point of its adapted chart, including points over $\C[[t]]$.
\end{lemma}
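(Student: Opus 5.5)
We evaluate $\lambda^\mon_{Z'}(m_Z)$ with \eqref{eq:integral}. Since $m_Z=e_{i_s}^{(p_s)}\cdots e_{i_1}^{(p_1)}$ with $p_a=p_a(Z)$, the value is $\chi_\sigma$ of the balanced partial-flag scheme in $M=M^{\mr{gen}}_{Z'}$. Its successive quotients, read from the top of $M$ downward, are $S_{i_1}^{p_1},S_{i_2}^{p_2},\ldots$. A point of this scheme is a chain of submodules
\[ M=N^0\supseteq N^1\supseteq\cdots\supseteq N^s=0,\qquad N^{a-1}/N^a\cong S_{i_a}^{\oplus p_a}. \]
Each quotient $N^{a-1}/N^a$ is a quotient of $N^{a-1}$ supported at $i_a$, so it is a quotient of $\fac_{i_a}(N^{a-1})$. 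Over the generic fibre the supported tops are semisimple (we work over a field). Hence $p_a\le$ the rank of the $i_a$-top of $N^{a-1}$, with equality exactly when $N^a=\mathsf K_{i_a}(N^{a-1})$ is the canonical kernel.

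The key input is the sentence preceding the lemma. At every integral point of an adapted chart, and in particular for the generic lattice, the generic fibre has successive full-top ranks $p_a(Z')$ along the canonical removals. We also need that canonical removals of $M^{\mr{gen}}_{Z'}$ over $K_Z$ agree with the generic fibres of the integral canonical kernels. This holds because the incoming maps are surjective integrally, so the integral kernel spans the generic one.

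We then argue by induction on $a$, comparing with the lexicographic order. Suppose $p_b(Z)=p_b(Z')$ for $b<a$. Then every flag point must, at each stage $b<a$, take the full top, so $N^b$ is forced to be the canonical kernel. At stage $a$ the available top has rank $p_a(Z')$.
\begin{itemize}
\item[] If $p_a(Z)>p_a(Z')$ the scheme is empty, and the trace is $0$. This gives the vanishing statement.
\item[] If $\mathbf p(Z)=\mathbf p(Z')$, every stage is forced. The scheme is then a single $K_Z$-rational point (reduced, since the equal-rank condition makes it the unique point of a Weil-restricted Grassmannian of full rank), so the trace is $1$.
\end{itemize}
We must check that the forced choices are scheme-theoretically a point and not merely set-theoretically. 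This comes from the balanced flag scheme being a closed subscheme of $\Res\Gr_{p}(K_i^{p})\cong\op{Spec}K$ at each forced step.

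The main obstacle is the case distinction at the first unequal index. The lemma only asserts vanishing when $\mathbf p(Z)>_{\mr{lex}}\mathbf p(Z')$, and the emptiness argument above covers exactly that case. The delicate point is rather the claim that the generic fibre tops have the integral ranks and are semisimple quotients $S_i^{\oplus p}$. I would check this using the chart formulas of Proposition~\ref{prop:charts}: $A_i(U)$ is surjective and $\gamma$ lands in its kernel, so $\fac_i(M)$ is exactly $\OO_i^p$ integrally and $K_i^p$ generically.

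The identical argument applies verbatim to any integral point of the adapted chart, including $\C[[t]]$-points, because only these rank properties were used. Finally, independence of the chosen generic lattice follows from Lemma~\ref{lem:indep}.
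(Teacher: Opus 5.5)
Your proposal is correct and follows the paper's proof essentially step by step: equal leading ranks force the canonical full-top kernels, the first larger requested rank makes the partial flag scheme empty, and for equal sequences the partial flag is a single $K$-rational point whose divided-power evaluation is one by \eqref{eq:integral}. Your extra checks (generic-fibre tops computed from the integral incoming maps, and the scheme-theoretic forcing) match the paper's surrounding remarks; the reducedness concern could be dropped, since \'etale cohomology is insensitive to nilpotents and only the $K$-rationality of the unique point matters.
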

\begin{proof}
A quotient supported at $i$ cannot have rank greater than the full $i$-top. A quotient with rank equal to that top has the canonical kernel. For unequal sequences, the preceding equal ranks force the same canonical removals, and the first larger requested rank makes the flag scheme empty. For equal sequences the partial flag is unique, and its divided-power evaluation equals one by \eqref{eq:integral}.
\end{proof}

Thus, if rows and columns are ordered by increasing $\mathbf p(Z)$, the matrix $(\lambda_Y^\mon(m_Z))_{Y,Z}$ is lower unitriangular: it has diagonal entries one and zeros above the diagonal.

\begin{theorem}[Integral monodromy basis]\label{thm:integralbasis}
The evaluation map
\begin{equation}\label{eq:integraliso}
 U^+_{\Z,\mathbf r}\longrightarrow\Z^{\Bcal_1(\mathbf r)},\qquad
 u\longmapsto(\lambda_Z^\mon(u))_Z
\end{equation}
is an isomorphism. For each $Z$, let $b_Z^\mon$ be the inverse image of the coordinate vector that is one at $Z$ and zero at the other components. The elements $b_Z^\mon$, in all degrees, form the \emph{monodromy basis} of $U^+_\Z$. They satisfy
\[ \lambda_Y^\mon(b_Z^\mon)=\delta_{Y,Z} \]
and do not depend on the periodic vertex order used to construct the adapted monomials.
\end{theorem}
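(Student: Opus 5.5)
The plan is to combine the unitriangularity of Lemma~\ref{lem:triangular} with a rank count. First, the map \eqref{eq:integraliso} is well defined with values in $\Z^{\Bcal_1(\mathbf r)}$. Each $\lambda_Z^\mon$ annihilates $J_C$ by Corollary~\ref{thm:Serre}, so it is a functional on $U^+_{\Q,\mathbf r}$. By \eqref{eq:integral} it takes integer values on divided-power monomials, which span $U^+_{\Z,\mathbf r}$ additively. Hence $U^+_{\Z,\mathbf r}$ maps into $\Z^{\Bcal_1(\mathbf r)}$.

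Next I would prove surjectivity. The adapted monomials $m_Z$ lie in $U^+_{\Z,\mathbf r}$. Order the components by increasing $\mathbf p(Z)$; this is a total order, because $\mathbf p(Z)$ determines $Z$ through the maximal-top correspondences. In this order, Lemma~\ref{lem:triangular} shows that the matrix $(\lambda_Y^\mon(m_Z))_{Y,Z}$ is lower unitriangular over $\Z$, so it is invertible over $\Z$. The images of the $m_Z$ therefore form a $\Z$-basis of $\Z^{\Bcal_1(\mathbf r)}$, and the map is surjective.

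For injectivity, $U^+_{\Z,\mathbf r}$ is a free $\Z$-module whose rank is $\dim_\Q U^+_{\Q,\mathbf r}$. This is the standard fact that the divided-power form is a lattice in $U^+_\Q$, free in each degree by a PBW-type basis; I would cite Kostant or Lusztig for it. By \eqref{eq:published}, this rank equals $|\Bcal_1(\mathbf r)|$. A surjection between free $\Z$-modules of the same finite rank is an isomorphism. Equivalently, the $m_Z$ already form a $\Z$-basis of $U^+_{\Z,\mathbf r}$, and the map sends this basis to a unimodular set. The elements $b_Z^\mon$ are then defined by inverting the map, and $\lambda_Y^\mon(b_Z^\mon)=\delta_{Y,Z}$ holds by construction. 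Taking all degrees together gives a basis of $U^+_\Z$.

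Independence of the periodic vertex order should follow at once. The functionals $\lambda_Z^\mon$ are defined through $M_Z^{\mr{gen}}$ with no reference to any vertex order, and Lemma~\ref{lem:indep} shows they are independent of the other choices. The $b_Z^\mon$ form the dual basis to these functionals under the isomorphism, so they do not depend on the order either; the order entered only in proving invertibility.

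The step needing most care is the rank equality. The statement $|\Bcal_1(\mathbf r)|=\dim_\Q U^+_{\Q,\mathbf r}$ is taken from GLS in \eqref{eq:published}, and freeness of $U^+_{\Z,\mathbf r}$ of that rank must be quoted rather than proved. If I wanted to avoid freeness, an alternative is to show directly that the $m_Z$ span $U^+_{\Z,\mathbf r}$, but that seems harder. The rank argument is cleaner, since the $m_Z$ are $|\Bcal_1(\mathbf r)|$ elements of a torsion-free module of that rank with unimodular image.
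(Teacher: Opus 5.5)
Your argument is correct and is essentially the paper's: both rest on the integral unitriangular evaluation matrix of Lemma~\ref{lem:triangular} on the adapted monomials together with the count \eqref{eq:published}. The only difference is the closing step. The paper first gets rational bijectivity, and then shows directly that the $m_Z$ span $U^+_{\Z,\mathbf r}$: for $u\in U^+_{\Z,\mathbf r}$, it applies the integral inverse matrix to the integral evaluations of $u$ and uses rational injectivity. So the alternative you judged harder takes two lines and needs no freeness input. If you keep your version, you do not need a PBW-type citation for the freeness you quote (Kostant's theorem is a finite-type result anyway). The module $U^+_{\Z,\mathbf r}$ is the additive span of finitely many divided-power monomials inside a $\Q$-vector space, hence finitely generated and torsion-free, hence free of rank $\dim_\Q U^+_{\Q,\mathbf r}$.
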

\begin{proof}
By Lemma~\ref{lem:triangular} and \eqref{eq:integral}, the evaluation matrix on the $m_Z$ is integral unitriangular. Its size is $\dim_\Q U^+_{\Q,\mathbf r}$ by \eqref{eq:published}. Hence the $m_Z$ form a rational basis and the rational evaluation map is an isomorphism.

The inverse matrix is integral. Given $u\in U^+_{\Z,\mathbf r}$, its evaluations are integers by \eqref{eq:integral}. Applying that inverse matrix gives an integral combination of the $m_Z$ with the same evaluations as $u$. Rational injectivity makes this combination equal to $u$. The $m_Z$ are therefore an integral basis, and \eqref{eq:integraliso} is an integral isomorphism. The functionals were defined from the component reductions before choosing a periodic vertex order, so their dual basis has the same independence.
\end{proof}

Together with Theorem~\ref{thm:tower} and Lemma~\ref{lem:indep}, this proves Theorem~\ref{intro:basis}.

Let $\omega$ be the involutive anti-automorphism of $U^+_\Q$ fixing each $e_i$; it preserves $U^+_\Z$. By Section~\ref{sec:duality}, duality carries compatible generic lattices for $Z$ to those for $Z^*$. The ground-field flag isomorphism and Lemma~\ref{lem:indep} give
\begin{equation}\label{eq:duality-evaluations}
 \lambda_{Z^*}^{\mon}(u)=\lambda_Z^{\mon}(\omega(u)),\qquad
 \omega(b_Z^{\mon})=b_{Z^*}^{\mon}.
\end{equation}
The second identity follows from the defining duality of the basis. Independence of $\ell$ and of the tame generator is \cite[Corollary~6.5]{Nicaise}.

\begin{corollary}\label{thm:ramified-isomorphism}
The geometric convolution and evaluation maps are isomorphisms
\[ U^+_\Q\xrightarrow{\sim}\ms{C}_d\xrightarrow{\sim}\mc{M}_\sigma(\Gamma_d). \]
Together with Proposition~\ref{prop:coproduct}, this proves Theorem~\ref{intro:algebra}.
\end{corollary}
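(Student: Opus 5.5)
The plan is to combine the surjectivity already established with injectivity coming from Theorem~\ref{thm:integralbasis}. By Proposition~\ref{prop:Groth-convolution}, $e_i\mapsto[S_i]$ gives a surjective algebra homomorphism $\Phi:U^+_\Q\to\ms{C}_d$. Restricting classes to $K$-rational points and taking tame traces gives a linear map $\mathrm{ev}:\ms{C}_d\to\mc{M}_\sigma(\Gamma_d)$. This map is well defined on the quotient by induced classes because of Lemma~\ref{lem:induced-trace} and proper base change. By proper base change it sends the class of a word $w$ to $F_w$, so $\mathrm{ev}\circ\Phi=\ev_\sigma$ on words. Since $\mc{M}_\sigma(\Gamma_d)$ is by definition spanned by the $F_w$, the map $\mathrm{ev}$ is surjective. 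It therefore suffices to prove that the composite $\mathrm{ev}\circ\Phi:U^+_\Q\to\mc{M}_\sigma(\Gamma_d)$ is injective. Then $\Phi$ is injective, hence an isomorphism, and consequently $\mathrm{ev}$ is also injective and an isomorphism.

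For injectivity, take $u\in U^+_\Q$ with $F_u:=\mathrm{ev}\Phi(u)=0$ as a function on nilpotent $\Gamma_d$-modules over $K=\C((t))$. We may assume $u$ is homogeneous of degree $\mathbf r$, since the functions of different degrees have disjoint supports. The functionals $\lambda_Z^\mon$ are evaluations at $M_Z^{\mr{gen}}$, which is defined over the large field $K_Z$ rather than over $\C((t))$, so they cannot be applied to $F_u$ directly. Instead I would use the last clause of Lemma~\ref{lem:triangular}: for each $Z$, choose a $\C[[t]]$-point $\ms{M}'_Z$ of the adapted chart $\ms{X}_Z$, which exists because $f_Z\bmod t\neq0$. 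Put $N_Z=\ms{M}'_Z\otimes K$. Evaluating the adapted monomials $m_Y$ at these modules $N_Z$ gives the same lower unitriangular pattern as in Lemma~\ref{lem:triangular}. The matrix $(F_{m_Y}(N_Z))_{Z,Y}$ is therefore invertible.

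By Theorem~\ref{thm:integralbasis}, the $m_Y$ form a basis of $U^+_{\Q,\mathbf r}$, so we can write $u=\sum_Y c_Ym_Y$. The hypothesis gives $0=F_u(N_Z)=\sum_Y c_YF_{m_Y}(N_Z)$ for all $Z$, and invertibility of the matrix forces all $c_Y=0$, that is, $u=0$.

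The main obstacle is justifying the diagonal and vanishing entries at these special rational points, rather than at the generic points. This needs the surjectivity of the incoming maps at every removal stage of the adapted chart, which holds over $\C[[t]]$ by Nakayama's lemma. It also needs the top-rank argument of Lemma~\ref{lem:triangular} to be applied over $\C((t))$. Finally, the coproduct statement of Theorem~\ref{intro:algebra} follows by transporting Proposition~\ref{prop:coproduct} through these isomorphisms.
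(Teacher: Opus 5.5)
Your proposal is correct and follows the paper's proof essentially step for step. Both maps are surjective by construction. Injectivity of the composite comes from lifting special-fibre points of the adapted charts to $\C[[t]]$; the last clause of Lemma~\ref{lem:triangular} then gives a unitriangular evaluation matrix on the basis $\{m_Z\}$ for genuine $\C((t))$-modules, and Proposition~\ref{prop:coproduct} identifies the coproduct once faithfulness is known.
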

\begin{proof}
Both maps are surjective onto their stated targets by construction. To prove injectivity using modules over the original field $K=\C((t))$, fix a degree. In the special fibre of each adapted chart, choose a complex point and lift its coordinates to $\C[[t]]$. The defining polynomial remains invertible. Lemma~\ref{lem:triangular} gives a unitriangular evaluation matrix on the $m_Z$ for these actual $K$-modules. Since the $m_Z$ are a basis, these evaluations separate $U^+_{\Q,\mathbf r}$. This proves injectivity in every degree. The coproduct formula then identifies the geometric coproduct with the one for which every $e_i$ is primitive.
\end{proof}

\subsection{A calculation in type \texorpdfstring{$B_2$}{B2}}\label{sec:b2-example}
In this subsection, take
\[ C=\begin{pmatrix}2&-1\\-2&2\end{pmatrix},\qquad d=(2,1),\qquad R=\OO[x]/(x^2-t). \]
Thus the two vertex rings are $\OO_1=R$ and $\OO_2=\OO$. We compute the four basis elements in degree $(2,2)$. The relevant special-fibre modules are discussed in \cite[Sections~5.5, 8.2.2 and 8.2.5]{GLSIV}; the integral formulas below calculate their monodromy evaluations. In this degree the monodromy and special-fibre evaluations coincide on all convolution words. Write $a:1\to2$ and $b:2\to1$. The relations are $ba=0$ and $x(ab)+(ab)x=0$, together with $x^2=t$ at vertex $1$.

Let $\ms{E}$ have vertex modules $(R,0)$. The table defines four further lattices. Every arrow value not specified is zero. The maps are $\OO$-linear; a map from $uR$ is specified on the $\OO$-basis $u,ux$.
\begin{center}
\begin{tabular}{c|c|c|l}
\toprule
Lattice & Vertex $1$ & Vertex $2$ & Nonzero arrow values\\
\midrule
$\ms{T}_+$ & $uR$ & $\OO z$ & $zb=u$\\
$\ms{T}_-$ & $uR$ & $\OO z$ & $(ux)a=z$\\
$\ms{Q}_2$ & $uR$ & $\OO z_0\oplus\OO z_1$ & $z_0b=u,\quad(ux)a=z_1$\\
$\ms{Q}_1$ & $uR\oplus vR$ & $\OO z_0\oplus\OO z_1$ & $ua=z_0,\quad(ux)a=z_1,$\\
&&& $z_0b=v,\quad z_1b=-vx$\\
\bottomrule
\end{tabular}
\end{center}
These maps satisfy the relations. For example, on $u\in\ms{Q}_1$, the two terms of $x(ab)+(ab)x$ give $-vx$ and $vx$. Set
\[ \ms{M}_1=\ms{T}_-^{\oplus2},\qquad \ms{M}_2=\ms{Q}_1,\qquad \ms{M}_3=\ms{E}\oplus\ms{Q}_2,\qquad \ms{M}_4=\ms{T}_+^{\oplus2}. \]
All four lattices have rank $(2,2)$.

Their endomorphism modules are free of rank four over $\OO$, and the same equations hold after every central reduction. For $\ms{T}_\pm^{\oplus2}$ the endomorphism algebra is $\op{Mat}_2(\OO)$. For $\ms{E}\oplus\ms{Q}_2$, both cross Hom spaces vanish. The endomorphisms of $\ms{E}$ form $R$, while an endomorphism of $\ms{Q}_2$ has the form
\[ u\mapsto\alpha u,\qquad z_0\mapsto\alpha z_0+\beta z_1,\qquad z_1\mapsto\alpha z_1,\qquad \alpha,\beta\in\OO. \]
An endomorphism of $\ms{Q}_1$ is determined by
\[ u\longmapsto uf+vg,\qquad v\longmapsto vf(-x),\qquad f,g\in R, \]
where $f(-x)$ is the image of $f(x)\in R$ under the involution $x\mapsto-x$; the arrow equations determine its values on $z_0,z_1$.

At length $k$, the change-of-basis group is
$\op{GL}_2(R/t^kR)\times\op{GL}_2(\OO/t^k)$,
of dimension $12k$. The stabilizer is open in the $4k$-dimensional endomorphism space. Each orbit therefore has dimension $8k=kN_d(2,2)$ and closes to a maximal component. The four closures are distinct, since successive full-top removals in the order $1,2,1,2$ give
$(2,2,0,0),\ (1,2,1,0),\ (1,1,1,1),\ (0,2,2,0)$ respectively. All later entries are zero. The positive roots are $\alpha_1,\alpha_2,\alpha_1+\alpha_2,\alpha_1+2\alpha_2$. There are four ways to express $(2,2)$ as a nonnegative integer sum of these roots. The Poincar\'e--Birkhoff--Witt theorem and \eqref{eq:published} therefore show that these are all the maximal components. Denote their compatible labels by $Z_1,\ldots,Z_4$. A change of vertex bases with generic power-series coefficients gives a representation generic in each orbit at every length, without changing its flag schemes. Hence the traces of $\ms{M}_j$ compute $\lambda_{Z_j}^{\mon}$.

The only nonempty full flag schemes of the generic fibres of $\ms{E},\ms{T}_+,\ms{T}_-,\ms{Q}_2$ have words $1,12,21,212$, respectively, and each is a point. For $\ms{Q}_1$, injectivity of $b$ excludes a flag beginning with $2$, and surjectivity of $a$ excludes one ending with $2$. The only remaining word is $1221$. Its first submodule is $vK_1$, and the intermediate line at vertex $2$ is arbitrary. This flag scheme is $\mb{P}^1_K$, whose monodromy trace is two. Proposition~\ref{prop:coproduct} determines the direct-sum evaluations.

The adapted divided-power monomials are
\[ m_1=e_2^{(2)}e_1^{(2)},\qquad m_2=e_1e_2^{(2)}e_1,\qquad m_3=e_2e_1e_2e_1,\qquad m_4=e_1^{(2)}e_2^{(2)}. \]
With rows indexed by $Z_1,\ldots,Z_4$ and columns by $m_1,\ldots,m_4$, the evaluation matrix is
\begin{equation}\label{eq:b2-matrix}
 \bigl(\lambda_{Z_j}^{\mon}(m_l)\bigr)_{j,l=1}^{4}
 =\begin{pmatrix}1&0&2&0\\0&1&0&0\\0&0&1&0\\0&0&0&1\end{pmatrix}.
\end{equation}
For instance, there are two ways to interleave two copies of $21$, preserving the order in each, to obtain $2121$. Thus $\lambda_{Z_1}^{\mon}(m_3)=2$. Inverting \eqref{eq:b2-matrix} gives
\[ b_{Z_1}^{\mon}=m_1,\qquad b_{Z_2}^{\mon}=m_2,\qquad b_{Z_3}^{\mon}=m_3-2m_1=e_2e_1^{(2)}e_2,\qquad b_{Z_4}^{\mon}=m_4. \]
The alternative expression for $b_{Z_3}^{\mon}$ follows by multiplying the Serre relation $e_1^2e_2-2e_1e_2e_1+e_2e_1^2=0$ on the left by $e_2$. The example thus determines the basis elements, including the integral normalization of the nontrivial correction.

\section{Crystal compatibility and integrable modules}\label{sec:perfect}
We return to arbitrary $(C,d)$. For $Z\in\Bcal_1(\mathbf r)$, let $t_i(Z)$ and $s_i(Z)$ be the ranks of $\fac_i(M)$ and $\op{sub}_i(M)$ for a generic module $M$ in $Z$. These are the full vertex-supported top and socle defined in Section~\ref{sec:setup}. Their ranks are constant on a dense open of $Z$.

Write $\mathsf K_iZ$ for the component obtained by removing the entire $i$-top, and $\mathsf Q_iZ$ for the component obtained by quotienting by the entire $i$-socle. The GLS correspondences give
\[ \mathsf K_iZ\in\Bcal_1(\mathbf r-t_i(Z)\alpha_i),\qquad \mathsf Q_iZ\in\Bcal_1(\mathbf r-s_i(Z)\alpha_i). \]
The first component has zero generic $i$-top, and the second has zero generic $i$-socle. If the relevant rank is zero, the corresponding removal leaves $Z$ unchanged. We use zero for a homogeneous component of $U^+_\Q$ whose degree has a negative coordinate.

Throughout this section we prove the top/right-multiplication statements; the socle/left-multiplication statements follow from \eqref{eq:duality-evaluations} and
$s_i(Z)=t_i(Z^*)$, $\mathsf Q_iZ=(\mathsf K_iZ^*)^*$, by Section~\ref{sec:duality}.

\subsection{Canonical removals and multiplication}
We explain why the canonical removals of $\ms{M}_Z$ compute the functionals of these smaller components. By Theorem~\ref{thm:tower}, all generic truncations have the same free top and socle ranks. Smith normal form over $\Bbbk_Z[[x_i]]$, with $t=x_i^{d_i}$, then gives only unit and zero invariant factors for the integral incoming and outgoing maps. Indeed, a nonzero factor $x_i^a$ with $a>0$ would produce a nonfree top or socle at every central length $k$ with $kd_i>a$.

The integral incoming and outgoing maps therefore split onto their images, and their kernels and cokernels commute with reduction. In the proof of Theorem~\ref{thm:tower}, the charts beginning with the prescribed removal have dominant projections to the smaller components. Hence the full-top kernel and the full-socle quotient of the generic lattice represent $\mathsf K_iZ$ and $\mathsf Q_iZ$, respectively, at every central length. Lemma~\ref{lem:indep} identifies their evaluations with the canonical generic evaluations.

The following is the monodromy version of the full-top restriction in \cite[Lemma~7.1]{GLSIV}.
\begin{lemma}\label{lem:removal}
Let $Z\in\Bcal_1(\mathbf r)$, $p=t_i(Z)$, and $q=s_i(Z)$. Then
\[ \lambda_Z^\mon(u e_i^{(p)})=\lambda_{\mathsf K_iZ}^\mon(u),\qquad u\in U^+_{\Q,\mathbf r-p\alpha_i}, \]
and
\[ \lambda_Z^\mon(e_i^{(q)}v)=\lambda_{\mathsf Q_iZ}^\mon(v),\qquad v\in U^+_{\Q,\mathbf r-q\alpha_i}. \]
If $a>p$, then $\lambda_Z^\mon(u e_i^{(a)})=0$ for $u\in U^+_{\Q,\mathbf r-a\alpha_i}$. If $a>q$, then $\lambda_Z^\mon(e_i^{(a)}u)=0$ in the same degree.
\end{lemma}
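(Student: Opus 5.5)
We prove the top statements and deduce the socle statements from \eqref{eq:duality-evaluations} together with $s_i(Z)=t_i(Z^*)$ and $\mathsf Q_iZ=(\mathsf K_iZ^*)^*$. By linearity it suffices to take $u$ to be a word $w$, or a divided-power monomial, in degree $\mathbf r-p\alpha_i$ or $\mathbf r-a\alpha_i$. By the convention of Section~\ref{sec:hopf}, $\lambda_Z^\mon(w\,e_i^{(a)})$ is the monodromy trace of the partial-flag scheme of $M=M_Z^{\mr{gen}}$. These are the flags refining $w$ on a submodule $N\subseteq M$, with $M/N\cong S_i^{\oplus a}$. Here we use that the complete refinement of a final quotient $S_i^a$ has trace $a!$ by Proposition~\ref{lem:relative}, which converts $F_{wi^a}$ into the divided-power evaluation. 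So we fibre this scheme over the scheme $G_a$ of balanced submodules $N$ with quotient $S_i^{\oplus a}$, and study $G_a$.

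The quotient $M/N$ is supported at $i$ with zero nonloop action, so $N\supseteq\mathsf K_i(M)$. Thus $G_a$ is identified with the balanced Grassmannian of $K_i$-subspaces of codimension $a$ in $\fac_i(M)$ containing nothing further. By the preceding discussion, the integral incoming map of $\ms{M}_Z$ splits, so $\fac_i(M)$ is free of rank $p=t_i(Z)$ over $K_i\otimes$ the coefficient field. It follows that $G_a=\Res_{K_i/K}\Gr_{p-a}(K_i^{p})$, and $G_a$ is empty for $a>p$. This already gives the vanishing statement: the relevant flag scheme is empty, so its trace is zero.

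For $a=p$, the scheme $G_p$ is a single $K$-point with $N=\mathsf K_i(M)$. The flag scheme is then exactly the flag scheme of $w$ in $\mathsf K_i(M)$, and $\lambda_Z^\mon(w e_i^{(p)})=F_w(\mathsf K_i(M))$. It remains to identify $F_w(\mathsf K_i(M))$ with $\lambda_{\mathsf K_iZ}^\mon(w)$. This identification is the main point. The full-top kernel of the integral lattice $\ms{M}_Z$ is a locally free lattice, by the Smith normal form argument above. Its reductions modulo $t^k$ are the full-top kernels of the generic truncations, because the kernel commutes with reduction by Lemma~\ref{lem:split-vertex}. The chart beginning with this removal projects dominantly onto the chart of the smaller component (Proposition~\ref{prop:charts} and the proof of Theorem~\ref{thm:tower}). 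Hence these reductions are generic in $(\mathsf K_iZ)_k$ for every $k$, not merely at $k=1$. Lemma~\ref{lem:indep} then applies and gives the equality of traces over the enlarged field $\Bbbk_Z$. The delicate step is this genericity at every length simultaneously. We would verify it through the compatible coordinates: the removed lattice is obtained from the generic $\kappa_Z[[t]]$-point by forgetting the graph and $\gamma$ coordinates, and these remaining coordinates are still algebraically independent.
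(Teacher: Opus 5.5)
Your proposal is correct and follows the paper's route. The case $a>p$ vanishes because no balanced quotient of rank greater than $p$ exists. For $a=p$ the quotient is forced to be the canonical full-top kernel, with divided-power normalization one by \eqref{eq:integral}. That kernel's evaluations are then identified with $\lambda_{\mathsf K_iZ}^\mon$ using the integral Smith-normal-form splitting, the dominance of the adapted chart at every central length, and Lemma~\ref{lem:indep}; the socle statements follow by duality. Your write-up just makes explicit the genericity-at-all-lengths argument that the paper places in the paragraph preceding the lemma.
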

\begin{proof}
A supported quotient of rank $p$ has the canonical full-top kernel. Its divided-power normalization is one by \eqref{eq:integral}, while a quotient of greater rank is impossible. The genericity of this canonical removal identifies the remaining evaluations with those of $\mathsf K_iZ$.
\end{proof}

We denote the raising operators of the two component crystals by $\mathsf F_i^{\mr{top}}$ and $\mathsf F_i^{\mr{soc}}$. Concretely, $\mathsf F_i^{\mr{top}}Z$ is the unique component of rank $\mathbf r+\alpha_i$ with
\[ \mathsf K_i(\mathsf F_i^{\mr{top}}Z)=\mathsf K_iZ,\qquad t_i(\mathsf F_i^{\mr{top}}Z)=t_i(Z)+1. \]
Similarly, $\mathsf F_i^{\mr{soc}}Z$ has the same full-socle quotient component as $Z$ and has socle rank $s_i(Z)+1$. These are the component correspondences of \cite[Section~5.3]{GLSIV}. Their $a$th powers mean $a$ successive applications, with the zeroth power the identity. The spans in the next theorem range over components in all degrees.

\begin{theorem}[Two-sided crystal compatibility]\label{thm:perfect}
For every $i\in I$ and integer $a\ge0$,
\begin{equation}\label{eq:ideals}
\begin{aligned}
 U^+_\Q e_i^{(a)}&=\op{span}_\Q\{b_Z^\mon:t_i(Z)\ge a\},\\
 e_i^{(a)}U^+_\Q&=\op{span}_\Q\{b_Z^\mon:s_i(Z)\ge a\}.
\end{aligned}
\end{equation}
The first subspace is a left ideal and the second a right ideal. If $Z$ has rank $\mathbf r$ and $p=t_i(Z)$, then
\begin{equation}\label{eq:leading}
 b_Z^\mon e_i^{(a)}=
 \binom{p+a}{a}b_{(\mathsf F_i^{\mr{top}})^aZ}^\mon
 +\sum_{t_i(Y)>p+a}c_Yb_Y^\mon,\qquad c_Y\in\Z,
\end{equation}
where the sum is over $Y\in\Bcal_1(\mathbf r+a\alpha_i)$. The corresponding formula for left multiplication is
\[ e_i^{(a)}b_Z^\mon=\binom{s_i(Z)+a}{a}b_{(\mathsf F_i^{\mr{soc}})^aZ}^\mon+\sum_{s_i(Y)>s_i(Z)+a}c'_Yb_Y^\mon,\qquad c'_Y\in\Z, \]
with the same rank for $Y$.
\end{theorem}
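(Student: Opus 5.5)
We work entirely with the dual functionals. By Theorem~\ref{thm:integralbasis}, an element $u\in U^+_{\Q,\mathbf r}$ satisfies $u=\sum_Y\lambda_Y^\mon(u)b_Y^\mon$. Both statements in \eqref{eq:ideals} and \eqref{eq:leading} therefore reduce to computing $\lambda_Y^\mon(b_Z^\mon e_i^{(a)})$ for every $Y\in\Bcal_1(\mathbf r+a\alpha_i)$. We treat only the top and right-multiplication case; the socle case follows from \eqref{eq:duality-evaluations} as announced.

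\textbf{Step 1: vanishing and leading coefficient.} Fix $Y$ and put $p'=t_i(Y)$. For $a>p'$, Lemma~\ref{lem:removal} gives $\lambda_Y^\mon(ue_i^{(a)})=0$. For $a\le p'$, we write $e_i^{(p'-a)}e_i^{(a)}=\binom{p'}{a}e_i^{(p')}$. We then need a formula for $\lambda_Y^\mon(ue_i^{(a)})$ in terms of $\mathsf K_iY$, and we plan to derive one from the coproduct-type decomposition of the last $a$ letters. On the generic lattice of $Y$, a flag whose final quotient is $S_i^{a}$ has as its penultimate member a submodule $N$ with $\mathsf K_i(M)\subseteq N\subseteq M$ and $M/N\cong S_i^a$. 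These submodules are parametrized by the Weil-restricted Grassmannian $\Res\Gr_{p'-a}(K_i^{p'})$ of quotients of the free top. We will stratify this Grassmannian by the component of $N$, apply Proposition~\ref{lem:relative}, and discard properly induced strata. Along the dense stratum, $N$ should be generic in $(\mathsf F_i^{\mr{top}})^{p'-a}\mathsf K_iY$. This gives
\[ \lambda_Y^\mon(ue_i^{(a)})=\lambda_{Y_a}^\mon(u),\qquad Y_a=(\mathsf F_i^{\mr{top}})^{p'-a}\mathsf K_iY,\]
valid when the trace of the Grassmannian, namely $\binom{p'}{a}$, is pulled out compatibly. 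Equivalently, writing $u=b_Z^\mon$,
\[\lambda_Y^\mon(b_Z^\mon e_i^{(a)})=\binom{p'}{a}\lambda_{\mathsf K_iY}^\mon\bigl(\text{correction}\bigr)\]
when $t_i(Z)=p'-a$.

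A cleaner route, which we would try first, is induction on $p'$ using Lemma~\ref{lem:removal} directly. The key identity is
\[\lambda_Y^\mon(ue_i^{(p')})=\lambda_{\mathsf K_iY}^\mon(u).\]
Applied to $u=b_Z^\mon e_i^{(p'-a)}\cdot$ with $u$ of degree $\mathbf r+a\alpha_i-p'\alpha_i$, this shows that $\lambda_Y^\mon(b_Z^\mon e_i^{(a)})$ depends only on $\mathsf K_iY$ and on how $b_Z^\mon$ pairs with flags whose final $i$-quotient has rank $p'-a$ exactly.

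\textbf{Step 2: triangularity.} For $Y$ with $p'<p+a$, the identity $\lambda_Y^\mon(b_Z^\mon e_i^{(a)})=0$ must hold. We take an arbitrary rank-$a$ quotient $N$ of the top of $M_Y$, so that $N$ has $i$-top of rank at most $p'-a<p$. Expanding with respect to flags through $N$, we expect each stratum to contribute $\lambda_{[N]}^\mon(b_Z^\mon)$ for components $[N]$ with $t_i<p$, and these vanish because $[N]\ne Z$. Making ``generic $N$ in a stratum gives a generic point of a component'' precise is delicate, so we instead use the first identity of \eqref{eq:ideals}, proved separately by a dimension count. That count shows $U^+e_i^{(a)}\subseteq\op{span}\{b_Y:t_i(Y)\ge a\}$ by Lemma~\ref{lem:removal}. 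For the reverse inclusion, we take $Y$ with $t_i(Y)\ge a$ and observe that $b_{\mathsf K_iY}^\mon e_i^{(t_i(Y))}$ is unitriangular. This reduces the claim to the fact that $t_i$ is unchanged by multiplication.

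For $p'=p+a$, the same reduction gives
\[\lambda_Y^\mon(b_Z^\mon e_i^{(a)})=\binom{p+a}{a}\lambda_{\mathsf K_iY}^\mon(b_Z^\mon e_i^{(p)}\ \text{rewritten}).\]
Combining this with $\lambda_Z^\mon(ve_i^{(p)})=\lambda_{\mathsf K_iZ}^\mon(v)$, the pairing equals $\binom{p+a}{a}\delta_{\mathsf K_iY,\mathsf K_iZ}$. This yields the leading coefficient with $Y=(\mathsf F_i^{\mr{top}})^aZ$, and integrality of the $c_Y$ follows from \eqref{eq:integral}.

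\textbf{Main obstacle.} The hard point is the vanishing for $p'<p+a$, together with the precise identification in the equality case. Both require controlling the submodule $N$ between $\mathsf K_i(M_Y)$ and $M_Y$ uniformly on the Grassmannian of top quotients. We would attempt this with the integral splitting from Section~\ref{sec:perfect}, which makes the top free, combined with Proposition~\ref{lem:relative}. Together these should express $\lambda_Y^\mon(ue_i^{(a)})$ as $\binom{p'}{a}$ times the generic evaluation of $u$ on $N\supseteq\mathsf K_iM$ with $t_i(N)=p'-a$. The remaining difficulty is to show that this $N$ is generic in the component $(\mathsf F_i^{\mr{top}})^{p'-a}\mathsf K_iY$.
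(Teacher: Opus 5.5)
Your Step~1 formula, $\lambda_Y^\mon(ue_i^{(a)})=c\,\lambda_{Y_a}^\mon(u)$ with $Y_a=(\mathsf F_i^{\mr{top}})^{p'-a}\mathsf K_iY$ and $c=1$ or $c=\binom{p'}{a}$, is false. The route you single out as the main obstacle therefore cannot be completed. Take type $A_2$ with $d=(1,1)$, so the values are ordinary flag counts, and take $i=1$, $a=1$. Let $Y$ be the component of rank $2\alpha_1+\alpha_2$ on which the arrow $2\to1$ vanishes. Then $t_1(Y)=2$ and $\mathsf K_1Y$ is the component of $S_2$. Let $P$ and $Q$ be the components of rank $\alpha_1+\alpha_2$ on which $2\to1$, respectively $1\to2$, vanishes. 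Then $b_P^\mon=e_2e_1$, $b_Q^\mon=e_1e_2$ and $Y_1=P$, so $\lambda_{Y_1}^\mon(b_Q^\mon)=0$, whereas $\lambda_Y^\mon(b_Q^\mon e_1)=\lambda_Y^\mon(e_1e_2e_1)=1$. Geometrically, the candidates $N=L\oplus M_2$, with $L\subset M_1$ a line, form a $\mb{P}^1$. The generic $N$ lies in $P$ and contributes $0$, but the special point $L=\ker(M_1\to M_2)$ gives $N\cong S_1\oplus S_2$ and contributes $1$. Algebraically, $b_Q^\mon e_1=e_1^{(2)}e_2+e_2e_1^{(2)}=b^\mon_{\mathsf F_1^{\mr{top}}Q}+b_Y^\mon$ has exactly the kind of correction term that \eqref{eq:leading} allows and your formula forbids. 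So non-generic strata of the top-quotient Grassmannian genuinely contribute. Knowing that a generic $N$ lies in $Y_a$ would give neither the vanishing for $t_i(Y)<p+a$ nor the leading coefficient. Your displayed equality-case formula also pairs $\lambda^\mon_{\mathsf K_iY}$, of degree $\mathbf r-p\alpha_i$, with $b_Z^\mon e_i^{(p)}$, of degree $\mathbf r+p\alpha_i$; the word ``rewritten'' stands in for the missing step.

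That missing step is purely algebraic, and you already have its ingredient. Apply your unitriangularity remark to $Z$ itself. By Lemma~\ref{lem:removal}, $\lambda_Y^\mon(b^\mon_{\mathsf K_iZ}e_i^{(p)})$ is $0$ when $t_i(Y)<p$ and $\delta_{Y,Z}$ when $t_i(Y)=p$. The first identity in \eqref{eq:ideals} then gives $b_Z^\mon\equiv b^\mon_{\mathsf K_iZ}e_i^{(p)}\pmod{U^+_\Q e_i^{(p+1)}}$. Multiply on the right by $e_i^{(a)}$, and use $e_i^{(p)}e_i^{(a)}=\binom{p+a}{a}e_i^{(p+a)}$ and $U^+_\Q e_i^{(p+1)}e_i^{(a)}\subseteq U^+_\Q e_i^{(p+a+1)}$. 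Then apply the same congruence to $(\mathsf F_i^{\mr{top}})^aZ$, whose canonical removal is $\mathsf K_iZ$ and whose top rank is $p+a$. This yields $b_Z^\mon e_i^{(a)}\equiv\binom{p+a}{a}b^\mon_{(\mathsf F_i^{\mr{top}})^aZ}\pmod{U^+_\Q e_i^{(p+a+1)}}$, and \eqref{eq:ideals} converts the error into terms with $t_i(Y)>p+a$. In particular, the vanishing for $t_i(Y)<p+a$ needs no geometry; this is the paper's argument. For the first identity itself, your reverse inclusion is correct. It proceeds by downward induction on $t_i$, using that $b^\mon_{\mathsf K_iY}e_i^{(t_i(Y))}\in U^+_\Q e_i^{(a)}$ equals $b_Y^\mon$ plus terms of larger $t_i$. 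This is a genuine alternative to the paper's PBW-domain injectivity and dimension count, and needs only that $Y$ is determined by $\mathsf K_iY$ and $t_i(Y)$. It is not a dimension count, though, and you should present it as the proof rather than as a side remark.
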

\begin{proof}
The vanishing in Lemma~\ref{lem:removal} puts $U^+_\Q e_i^{(a)}$ in the span on the right of the first identity. The Poincar\'e--Birkhoff--Witt theorem gives an associated graded algebra equal to the symmetric algebra of $\mf{n}(C)$, which is a domain. Thus multiplication by $e_i^a$ is injective. In degree $\mathbf r$, its source has dimension $\dim U^+_{\Q,\mathbf r-a\alpha_i}$. The map $Z\mapsto(\mathsf F_i^{\mr{top}})^aZ$ is a bijection from components of that smaller rank to the components of rank $\mathbf r$ with $t_i\ge a$. Equation~\eqref{eq:published} therefore gives equality of dimensions and proves the first identity.

Put $Z_0=\mathsf K_iZ$, so $t_i(Z_0)=0$. Full-top recursion gives
\[ b_{Z_0}^\mon e_i^{(p)}\equiv b_Z^\mon\pmod{U^+_\Q e_i^{(p+1)}}. \]
Indeed, its evaluations on components with top rank less than $p$ vanish, and those with top rank $p$ are zero except at $Z$, where the value is one. Multiplying the congruence on the right by $e_i^{(a)}$ and using
$e_i^{(p)}e_i^{(a)}=\binom{p+a}{a}e_i^{(p+a)}$
gives the leading term in \eqref{eq:leading}. The error belongs to $U^+_\Q e_i^{(p+a+1)}$, whose basis vectors are identified by \eqref{eq:ideals}. All coefficients are integers because both the divided powers and the monodromy basis belong to $U^+_\Z$.
\end{proof}

\subsection{Integral strings and representation lattices}\label{sec:saturation}
For $a\ge0$, define additive subgroups of $U^+_\Z$ by
\[ \ms{F}_i^a=U^+_\Z\cap(U^+_\Q e_i^{(a)})=\bigoplus_{t_i(Z)\ge a}\Z b_Z^\mon,\qquad U^+_{i,0}=\bigoplus_{t_i(Z)=0}\Z b_Z^\mon. \]
The equality for $\ms{F}_i^a$ follows from \eqref{eq:ideals} and Theorem~\ref{thm:integralbasis}. These groups form a decreasing filtration. In each degree, $\ms{F}_i^a$ is \emph{saturated}: the quotient of the ambient free abelian group by it has no nonzero torsion.

For integrable representations we use left modules over the Kac--Moody algebra $\mf{g}(C)$. Let $\mf{h}$ be its Cartan subalgebra, $h_i\in\mf{h}$ the simple coroots, and $f_i$ the negative Chevalley generators. A weight $\lambda\in\mf{h}^*$ is \emph{dominant integral} if $\lambda_i:=\lambda(h_i)$ is a nonnegative integer for every $i$. Let $L(-\lambda)$ be the irreducible integrable lowest-weight module over $\Q$ with lowest weight $-\lambda$. Here integrability means that each $e_i$ and $f_i$ acts locally nilpotently. Choose a nonzero lowest-weight vector $v_{-\lambda}$; thus $f_iv_{-\lambda}=0$ and $hv_{-\lambda}=-\lambda(h)v_{-\lambda}$ for $h\in\mf{h}$. Its \emph{cyclic Kostant lattice} is
\[ L_\Z(-\lambda)=U^+_\Z v_{-\lambda}\subset L(-\lambda). \]
The map from $U^+_\Z$ to this lattice sends $u$ to $uv_{-\lambda}$.

\begin{proposition}\label{prop:integralstrings}
Multiplication gives an isomorphism of graded abelian groups
\[ \bigoplus_{p\ge0}U^+_{i,0}\otimes_\Z\Z e_i^{(p)}\xrightarrow{\sim}U^+_\Z,\qquad v\otimes e_i^{(p)}\longmapsto ve_i^{(p)}, \]
where $\Z e_i^{(p)}$ is the free rank-one subgroup generated by $e_i^{(p)}$. Moreover,
\begin{equation}\label{eq:saturatedideal}
 \ms{F}_i^a=\sum_{p\ge a}U^+_\Z e_i^{(p)}.
\end{equation}
For every dominant integral $\lambda$,
\begin{equation}\label{eq:allpowerskernel}
 \ker\bigl(U^+_\Z\longrightarrow L_\Z(-\lambda)\bigr)=\sum_i\sum_{p>\lambda_i}U^+_\Z e_i^{(p)}.
\end{equation}
The vectors $b_Z^\mon v_{-\lambda}$ with $t_i(Z)\le\lambda_i$ for every $i$ form a $\Z$-basis of $L_\Z(-\lambda)$. There are analogous decompositions with the divided powers on the left, using the socle filtration.
\end{proposition}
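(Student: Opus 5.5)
The plan is to get every assertion of Proposition~\ref{prop:integralstrings} from the integral triangularity of Theorem~\ref{thm:perfect}, together with the basis property of Theorem~\ref{thm:integralbasis}. I carry out the top (right-multiplication) version; the socle version then follows from \eqref{eq:duality-evaluations} and the anti-automorphism $\omega$.

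\emph{Step 1 (string decomposition).} Every component $Y$ can be written uniquely as $Y=(\mathsf F_i^{\mr{top}})^pZ$ with $t_i(Z)=0$ and $p=t_i(Y)$. Take $Z=\mathsf K_iY$; uniqueness follows from the GLS maximal-top correspondence recalled in Section~\ref{sec:setup}. For such $Z$, the leading-term formula \eqref{eq:leading} with $p=0$ reads
\[ b_Z^\mon e_i^{(a)}=b^\mon_{(\mathsf F_i^{\mr{top}})^aZ}+\sum_{t_i(Y)>a}c_Yb_Y^\mon,\qquad c_Y\in\Z. \]
In each degree, order the basis by $t_i$. The elements $b_Z^\mon e_i^{(a)}$, with $t_i(Z)=0$, are then unitriangular with integer coefficients with respect to the basis $\{b_Y^\mon\}$ of $U^+_\Z$, indexed through $Y\leftrightarrow(Z,a)$. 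Hence they form a $\Z$-basis of $U^+_\Z$. This is exactly the statement that the multiplication map from $\bigoplus_p U^+_{i,0}\otimes\Z e_i^{(p)}$ is an isomorphism.

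\emph{Step 2 (the saturated ideal \eqref{eq:saturatedideal}).} For the inclusion $\supseteq$, note that $U^+_\Z e_i^{(p)}$ lies in $U^+_\Z\cap U^+_\Q e_i^{(a)}$ whenever $p\ge a$. For $\subseteq$, the same triangularity restricted to $t_i\ge a$ shows that $\{b_Z^\mon e_i^{(p)}:t_i(Z)=0,\ p\ge a\}$ is a $\Z$-basis of $\bigoplus_{t_i(Y)\ge a}\Z b_Y^\mon=\ms{F}_i^a$. The error terms stay in $t_i>p\ge a$, so the triangular inversion never leaves this range.

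\emph{Step 3 (kernel \eqref{eq:allpowerskernel} and lattice basis).} For the inclusion $\supseteq$, use that $f_iv_{-\lambda}=0$ and $h_i v_{-\lambda}=-\lambda_iv_{-\lambda}$. By $\mf{sl}_2$ theory, $e_i^{\lambda_i+1}v_{-\lambda}=0$ in the integrable module $L(-\lambda)$, so every $U^+_\Z e_i^{(p)}$ with $p>\lambda_i$ dies. By Step 2, the right side is $\sum_i\ms{F}_i^{\lambda_i+1}=\bigoplus\Z b_Z^\mon$, the sum running over those $Z$ with $t_i(Z)>\lambda_i$ for some $i$. The remaining $b_Z^\mon$ then span $L_\Z(-\lambda)$. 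The main obstacle is their linear independence, equivalently the reverse inclusion. Rationally, I would invoke the classical presentation $L(-\lambda)\cong U^+_\Q/\sum_iU^+_\Q e_i^{\lambda_i+1}$ (Kac). The intersection $U^+_\Z\cap U^+_\Q e_i^{(\lambda_i+1)}$ equals $\ms{F}_i^{\lambda_i+1}$ by definition, and the sum $\sum_i\ms{F}_i^{\lambda_i+1}$ is spanned by a subset of a $\Z$-basis, hence saturated. Therefore any integral element killing $v_{-\lambda}$ lies in $\bigl(\sum_iU^+_\Q e_i^{\lambda_i+1}\bigr)\cap U^+_\Z=\sum_i\ms{F}_i^{\lambda_i+1}$. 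This gives \eqref{eq:allpowerskernel}. The images of the complementary basis vectors then form a $\Z$-basis of the quotient $U^+_\Z/\ker\cong L_\Z(-\lambda)$.
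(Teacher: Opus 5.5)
Your proposal is correct and follows the paper's proof essentially step by step. Like the paper, you get the unitriangular string decomposition from \eqref{eq:leading} applied to zero-top predecessors, obtain \eqref{eq:saturatedideal} by restricting to labels with $t_i\ge a$, and prove \eqref{eq:allpowerskernel} from Kac's rational presentation together with \eqref{eq:ideals} and saturation. Your separate $\mf{sl}_2$ argument for the easy inclusion is redundant once the presentation is invoked, but it is harmless.
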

\begin{proof}
For $t_i(Z_0)=0$, \eqref{eq:leading} gives
\[ b_{Z_0}^\mon e_i^{(p)}=b_{(\mathsf F_i^{\mr{top}})^pZ_0}^\mon+\sum_{t_i(Y)>p}c_Yb_Y^\mon,\qquad c_Y\in\Z. \]
Every label is determined by its zero-top predecessor and its top rank. In each degree these products therefore give an integral unitriangular change of basis. This proves the direct-sum decomposition. Restricting to labels with top rank at least $a$ proves \eqref{eq:saturatedideal}.

The standard rational presentation is \cite[Chapter~10]{Kac}
\[ L(-\lambda)\cong U^+_\Q\Big/\sum_iU^+_\Q e_i^{(\lambda_i+1)}. \]
By \eqref{eq:ideals}, the denominator is spanned by exactly those basis elements for which $t_i(Z)>\lambda_i$ for at least one $i$. Its intersection with $U^+_\Z$ is the integral span of the same elements. Equation~\eqref{eq:saturatedideal} identifies this subgroup with the right side of \eqref{eq:allpowerskernel}. The other labels give the claimed basis of the quotient.
\end{proof}

The \emph{Chevalley involution} of $\mf{g}(C)$ sends $e_i$ to $f_i$, $f_i$ to $e_i$, and $h\in\mf{h}$ to $-h$. Applying it to the preceding construction gives the corresponding basis of an integrable highest-weight lattice.

All sums above are finite in each homogeneous degree. The sum over every $p>\lambda_i$ in \eqref{eq:allpowerskernel} is necessary: in rank one and degree $n\ge a$,
\[ (U^+_\Z e_i^{(a)})_n=\Z\binom na e_i^{(n)}, \]
whereas its saturation is $\Z e_i^{(n)}$. Finally, the quotient in \eqref{eq:allpowerskernel} is free. For any commutative coefficient ring $B$, extension of scalars therefore gives a $B$-basis of $B\otimes_\Z L_\Z(-\lambda)$ with the same labels and the induced divided-power action. In positive characteristic this module need not be irreducible.

\section{Changes of coordinates}\label{sec:invariance}
The following transport statement applies to the coordinate changes used in this section. Let $\varphi:\ms{P}\to\ms{P}'$ be an isomorphism between two copies of the standard integral algebra for $(C,d)$, possibly with different coordinates or orientation signs. If $M$ is a right $\ms{P}$-module, its transport to $\ms{P}'$ has action $m\cdot a'=m\varphi^{-1}(a')$; when $\varphi$ is semilinear, the scalar structure is transported as well. We write $\varphi_*Z$ for the induced component label.

\begin{proposition}\label{prop:presentation-transport}
Assume that $\varphi$ preserves the vertex idempotents and carries each prescribed vertex ring onto its counterpart. It may be \emph{semilinear} for a $\C$-algebra automorphism $\phi$ of $\OO$, meaning that $\varphi(ca)=\phi(c)\varphi(a)$ for $c\in\OO$. Then it identifies the component systems and their flag schemes after the corresponding change of scalars. For $u\in U^+_\Q$,
\[ \lambda_{\varphi_*Z}^{\mon}(u)=\lambda_Z^{\mon}(u),\qquad b_{\varphi_*Z}^{\mon}=b_Z^{\mon}. \]
The left sides are computed for $\ms{P}'$ and the right sides for $\ms{P}$. The basis equality uses the identifications with $U^+_\Q$ that send the vertex-simple functions to the same generators $e_i$.
\end{proposition}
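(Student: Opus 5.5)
The plan is to transport every ingredient of the construction of $\lambda_Z^{\mon}$ along $\varphi$, in the order in which they were built. First I treat the special fibres. Since $\varphi$ is $\phi$-semilinear, it preserves $t\OO$ up to a unit, because $\phi$ is a $\C$-algebra automorphism of $\C[[t]]$ and so $\phi(t)=ut$ with $u$ a unit. It therefore induces isomorphisms $\Pi_k\to\Pi'_k$ for every $k$, compatible with reduction. Because it preserves the vertex idempotents and carries $H_{i,k}$ onto $H'_{i,k}$, transport preserves local freeness, the modules $E_{i,k}$ (the nonloop ideal is sent to the nonloop ideal, since it is generated by the paths leaving the vertex rings), $E$-filtrations, nilpotence, $i$-tops and $i$-socles. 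After choosing vertex bases, transport is an isomorphism of the $E$-filtered loci on $H_{i,k}^{r_i}$, up to the group action. It therefore gives bijections $\Bcal_k(\mathbf r)\to\Bcal'_k(\mathbf r)$, and these commute with the reductions of Theorem~\ref{thm:tower}. This defines $\varphi_*Z$ compatibly at all lengths, together with an isomorphism $\C(Z_k)\cong\C(\varphi_*Z_k)$ and hence $\kappa_Z\cong\kappa_{\varphi_*Z}$.

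Next I transport generic lattices. Take the compatible generic lattice $\ms{M}_Z$ over $\Bbbk_Z[[t]]$ and transport it by $\varphi\otimes\mathrm{id}$, twisting the scalars by $\phi$ extended coefficientwise. Its reductions are generic in each $(\varphi_*Z)_k$, so by Lemma~\ref{lem:indep} its generic fibre computes $\lambda^{\mon}_{\varphi_*Z}$. The key check is on flag schemes. Transport preserves submodules and sends $S_i$ to $S'_i$, since both are the unique simple modules supported at $i$. Hence it identifies $\Fl^R_w(M)$ with $\Fl^R_w(\varphi_*M)$ after the change of scalars $K_Z\to K_Z$ induced by $\phi$.

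The main obstacle is that this change of scalars is semilinear, so I must show that the monodromy traces still agree. I will extend $\phi$ to an automorphism $\widehat\phi$ of $K_Z$ and of a chosen algebraic closure. This is possible because $\phi(t)=ut$ with $u\equiv u_0\ne0$ modulo $t$, so $u$ has compatible roots in $\OO$. The Galois twist by $\widehat\phi$ conjugates tame inertia to itself and sends a topological generator $\sigma$ to another topological generator, since it preserves the filtration by the subgroups $G_{K^{(e)}}$. The transported flag scheme is the base change of the original along $\widehat\phi$. Its $\ell$-adic cohomology is therefore the original cohomology with $\sigma$ replaced by its conjugate generator. Independence of the tame generator, \cite[Corollary~6.5]{Nicaise}, then gives $F_w(\varphi_*M)=F_w(M)$ for every word.

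It follows that $\lambda^{\mon}_{\varphi_*Z}$ and $\lambda^{\mon}_Z$ agree on all words. Both identifications with $U^+_\Q$ send $e_i$ to the simple functions $F_i$, so the two functionals agree on $U^+_\Q$. The basis equality $b^{\mon}_{\varphi_*Z}=b^{\mon}_Z$ is then immediate from the defining duality in Theorem~\ref{thm:integralbasis}, because $\varphi_*$ is a bijection of labels.
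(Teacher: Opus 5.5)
Your proposal is correct and follows essentially the same route as the paper. You transport representation schemes, component systems, compatible generic lattices and flag schemes along $\varphi$, extend $\phi$ (with $\phi(t)=t\cdot\text{unit}$) to the algebraic closures so that a tame generator goes to a tame generator and the traces are unchanged, and obtain the basis equality from the defining duality. Your more explicit checks, such as $\varphi$ preserving the nonloop ideal and the appeal to independence of the tame generator, only spell out steps the paper states briefly.
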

\begin{proof}
Twisting by $\varphi^{-1}$ preserves precisely the stable vertex submodules and the ranks of their successive quotients. This is functorial over parameter rings, with scalar structure changed by $\phi$ when necessary. It identifies representation schemes, flag schemes, and central reductions. Hence it transports maximal components, their dominant reduction maps, and their compatible generic points.

Every automorphism of $\OO=\C[[t]]$ sends $t$ to $t$ times a unit. Extending it to the chosen algebraic closures identifies the absolute Galois groups and carries a tame generator to a tame generator. The generic flag cohomology is identified under this transport, and its trace is unchanged. This proves the functional equality; the defining duality of the bases proves the other one.
\end{proof}

\subsection{Common scaling of the symmetrizer}
Fix an integer $m\ge1$. We write $\Gamma_{md}(t)$ to indicate that the tuple is $md=(md_i)_{i\in I}$ and the base parameter is $t$. In this algebra, put
\[ u=\sum_i x_i^{d_i},\qquad K_t=\C((t)). \]
The edge commutation relation raised to the $e_{ij}$th power gives $x_i^{d_i}a_{ij}=a_{ij}x_j^{d_j}$. Thus $u$ is central and satisfies $u^m=t$. The polynomial $X^m-t$ is irreducible over $K_t$ by Eisenstein's criterion, so $u$ defines a central subfield $K_u=\C((u))$. As a $K_u$-algebra, $\Gamma_{md}(t)$ is $\Gamma_d(u)$: on the $i$th vertex, $u$ acts as $x_i^{d_i}$.

Since $t=u^m$, reduction modulo $t^k$ for the tuple $md$ agrees with reduction modulo $u^{mk}$ for the tuple $d$. Thus, if $(Z_k)_{k\ge1}$ is a component system for $d$, the corresponding system for $md$ is $(Z_{mk})_{k\ge1}$. Passing to these indices leaves the direct-limit field \eqref{eq:field} unchanged. The compatible generic matrices describe the same representation, now considered over $K_t\subset K_u$. Its balanced flag scheme over $K_t$ is the restriction of scalars of its balanced flag scheme over $K_u$.

Choose compatible tame generators with $\sigma_u=\sigma_t^m$. Applying \eqref{eq:weiltrace}, including after the coefficient extension used for a generic representation, gives
\begin{equation}\label{eq:scale}
 \lambda_{Z_m,md}^\mon=\lambda_{Z,d}^\mon,\qquad
 b_{Z_m,md}^\mon=b_{Z,d}^\mon.
\end{equation}
The tuple following the comma indicates which ramification data define the evaluation. For connected $C$, every integral symmetrizer is a positive integer multiple of the primitive one, whose entries have greatest common divisor one. Hence the basis is independent of that multiple. For disconnected $C$, apply the argument separately on each connected component; the flag schemes decompose accordingly and their traces satisfy the K\"unneth formula.

\subsection{Orientation signs and arrow bases}
Changing an orientation sign is implemented by negating one arrow in the corresponding doubled pair. This is an integral isomorphism preserving the vertex rings, so Proposition~\ref{prop:presentation-transport} applies. Negating the other arrow differs by a simultaneous rescaling of the pair. These rescalings form a connected multiplicative group and therefore fix every irreducible component. The two choices give the same correspondence of labels.

Fix an edge $\{i,j\}$ and put $g=g_{ij}$. List its forward arrows, in the chosen orientation, as a column $a=(a_1,\ldots,a_g)^{\mathsf T}$, and the paired reverse arrows as $b=(b_1,\ldots,b_g)^{\mathsf T}$. For $G\in\op{GL}_g(\OO_{ij})$, replace them by
$a\mapsto Ga$ and $b\mapsto(G^{-1})^{\mathsf T}b$.
Here $\mathsf T$ denotes matrix transpose, and the common edge ring $\OO_{ij}$ acts through its prescribed embeddings in the two vertex rings. Its coefficients commute through the arrows by \eqref{eq:comm}. The change preserves $\sum_ca_cb_c$ and the corresponding sum at the other endpoint, so every moment relation is preserved. A vertex flag is stable under the new arrows if and only if it is stable under the old ones, since $G$ is invertible over the same coefficient ring.

At length $k$, these changes form $\op{GL}_g(\C[z]/(z^{ke_{ij}}))$, where $z$ is the parameter of $\OO_{ij}$ and $t=z^{e_{ij}}$. Reduction modulo $z$ has connected affine fibres over $\op{GL}_g(\C)$, so this group is connected. A connected algebraic group fixes every irreducible component. Thus these changes leave each component label unchanged, and Proposition~\ref{prop:presentation-transport} gives equality of the labelled bases.

\subsection{Formal changes of uniformizer}
Let $\phi$ be the automorphism of $\OO$ defined by
\[ \phi(t)=t\eta(t),\qquad \eta(t)\in\C[[t]]^\times. \]
Put $L=\lcm_i d_i$, $w_i=L/d_i$, and $h_{ij}=L/e_{ij}$. Hensel's lemma gives a unit $v(t)$ with $v(t)^L=\eta(t)$, since the constant coefficient has an $L$th root in $\C$ and $L$ is invertible. Use the forward arrow of each doubled pair fixed in the preceding subsection. Define
\begin{equation}\label{eq:coordinate}
 t\mapsto\phi(t),\qquad x_i\mapsto v(t)^{w_i}x_i,\qquad
 a_{\mr{forward}}\mapsto a_{\mr{forward}},\qquad
 a_{\mr{reverse}}\mapsto v(t)^{-h_{ij}}a_{\mr{reverse}},
\end{equation}
where the exponent $h_{ij}$ belongs to the edge containing the arrow. These assignments define a $\phi$-semilinear isomorphism. Indeed, $d_iw_i=L$ verifies the power relations, and $f_{ji}w_i=f_{ij}w_j=h_{ij}$ verifies the commutation relations. Each summand in the $i$-moment relation is multiplied by $v^{-w_i}$, since $(f_{ji}-1)w_i-h_{ij}=-w_i$. The inverse uses $\phi^{-1}$ and the unit $(v\circ\phi^{-1})^{-1}$.

The isomorphism preserves every ideal $(t^k)$. On the special fibre, replacing $v(0)$ in \eqref{eq:coordinate} by a variable $z\ne0$ defines an action of the multiplicative group $\Gm=\op{Spec}\C[z,z^{-1}]$. Changing the vertex power bases identifies the rescaled loop matrices with the prescribed ones. This gives a connected-group action on the fixed-vertex representation variety and fixes each component. Proposition~\ref{prop:presentation-transport} therefore preserves the labelled basis. Any other choice of $v$ differs by a constant $L$th root of unity in the same action, so the result is independent of this choice.

\section{The symmetric case and direct sums}\label{sec:applications}
\subsection{The symmetric case}
For an ordinary preprojective module $M$, the Euler characteristics of its composition-series varieties define a linear form on $U^+_\Q$. These are the linear forms encoded by the GLS functions $\varphi_M$ \cite[Section~1.2]{GLSJAMS}. Their descending series $M=X_0\supseteq\cdots\supseteq X_h=0$ becomes our ascending series with successive quotients $X_{h-1}/X_h,\ldots,X_0/X_1$. We use this correspondence of conventions throughout the comparison.

\begin{proposition}\label{prop:symmetric}
If $C$ is symmetric, the monodromy basis agrees with Lusztig's semicanonical basis in $U^+_\Z$. More precisely, if $f_Z$ denotes Lusztig's semicanonical basis element labelled by the component $Z$ in his convolution convention, then $f_Z=b_{Z^*}^\mon$.
\end{proposition}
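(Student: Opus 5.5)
When $C$ is symmetric we may take $d=(1,\ldots,1)$: by the scaling invariance \eqref{eq:scale} the basis does not depend on the multiple of the primitive symmetrizer. With $d_i=1$ all vertex fields equal $K$, so there is no ramification. The vertex rings $H_{i,1}$ are $\C$, and $\Pi_1$ is the ordinary preprojective algebra (opposite). Its $E$-filtered modules are exactly the nilpotent modules, so $\Bcal_1(\mathbf r)$ is Lusztig's set of irreducible components of the nilpotent variety. The plan is to show that the monodromy functionals $\lambda_Z^{\mon}$ coincide with Lusztig's generic Euler-characteristic functionals, up to the duality $Z\mapsto Z^*$ that comes from the change of conventions. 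The dual-basis characterizations then force $f_Z=b_{Z^*}^{\mon}$.

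\textbf{Step 1 (trace equals special Euler characteristic generically).} Fix $Z$ and take the compatible generic lattice $\ms{M}_Z$ over $\Bbbk_Z[[t]]$. With $d_i=1$, each balanced flag scheme $\Fl^R_w(M_Z^{\mr{gen}})$ is the generic fibre of the projective $\Bbbk_Z[[t]]$-scheme of integral flags of saturated sublattices. Its special fibre is $\Fl_w$ of the reduction $M_Z\otimes\Bbbk_Z$, which is generic in $Z$. I would compare the two fibres by nearby cycles: $\chi_\sigma$ of the generic fibre equals the alternating trace of $\sigma$ on $R\Gamma$ of the special fibre with coefficients in $R\Psi$. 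It then suffices to find a stratification of the special fibre on which the integral flag family is locally trivial, for instance by constant-rank strata of the arrow matrices as in the proof of Theorem~\ref{thm:Serre-class}. On such a stratification $R\Psi$ is the constant sheaf with trivial monodromy, and so $\chi_\sigma(\Fl^R_w(M_Z^{\mr{gen}}))=\chi_c(\Fl_w(M_Z\otimes\Bbbk_Z))$.

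A cleaner route, which I would try first, uses genericity in every $Z_k$ to move the base. Use the chart of Proposition~\ref{prop:charts} together with the constant lift of a generic special point, as in the proof of Corollary~\ref{thm:ramified-isomorphism}. In this case the lattice is $M\otimes_\C\C[[t]]$, and its generic fibre is $M\otimes K$. Then $\Fl_w(M\otimes K)=\Fl_w(M)\times_\C K$, whose inertia action is trivial, so the trace is $\chi_c(\Fl_w(M))$. The remaining task is to show that the value computed from this constant lift equals $\lambda_Z^{\mon}$. I would argue by constructibility: the trace is constant on a dense open subset of the chart by the family version of Lemma~\ref{lem:indep} and Proposition~\ref{lem:relative}. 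For symmetric $C$ there are no proper induced classes, since $K_i=K$, and the constant family over $\C[[t]]$ is a valid point of the chart.

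\textbf{Step 2 (matching conventions).} Lusztig's $f_Z$ is characterized by the requirement that its generic value on the composition-series varieties at $Z'$ equal $\delta_{Z,Z'}$. His composition series are descending with GLS conventions, as recalled before the statement. Passing to our ascending right-module convention reverses words, which amounts to applying $\omega$, or equivalently duality on modules. By \eqref{eq:duality-evaluations} this turns $\lambda_{Z'}$ into $\lambda_{Z'^*}$. Combining this with Step~1 gives $\lambda^{\mon}_{Y}(f_Z)=\delta_{Y,Z^*}$, and Theorem~\ref{thm:integralbasis} then yields $f_Z=b_{Z^*}^{\mon}$.

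The main obstacle is Step~1, namely justifying that the generic lattice over the huge field $\kappa_Z$ gives the same trace as a constant lift of a special generic point. If constructibility in families is delicate, I would fall back on the triangularity argument. Both Lusztig's basis and ours are dual to the same adapted monomials $m_Z$ modulo lower terms. The evaluations at constant lifts are then exactly Lusztig's special-fibre evaluations, and both sets of functionals factor through the same $U^+_\Q$ via Corollary~\ref{thm:Serre}. Equality of the functionals would follow if $\lambda^{\mon}_Z$ is the generic value of the constructible function $M\mapsto\chi_\sigma$ over the chart. This holds because $\chi_\sigma$ of a family is a constructible function of the $\C[[t]]$-jet, and the generic point of every $Z_k$ lies in the open set where it takes its generic value.
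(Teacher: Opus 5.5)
Your reduction to $d=(1,\ldots,1)$ via \eqref{eq:scale} and your convention matching in Step~2 agree with the paper. Step~1, however, carries the whole content of the proof, and it is not established.

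\begin{itemize}
\item \emph{Constant-lift route.} The lattice $M\otimes_\C\C[[t]]$ is not generic in the $Z_k$, so Lemma~\ref{lem:indep} does not identify its trace with $\lambda_Z^\mon$. Your bridge is the claim that $\chi_\sigma$ is a constructible function of the $\C[[t]]$-jet, attaining its generic value at the generic points of all $Z_k$. This is asserted rather than proved, and nothing in the paper supplies it.
\item \emph{Unramified vertex fields.} The remark that $K_i=K$ leaves no properly induced classes does not give trivial inertia. Even with $d=1$, a nilpotent $K$-module can have a flag scheme whose geometric points are exchanged by monodromy. For example, in a Kronecker-type configuration a flag may have to be an eigenline of a matrix with eigenvalues $\pm\sqrt t$.
\item \emph{Nearby-cycle route.} This rests on a local triviality you do not formulate usably. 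The constant-rank strata of Theorem~\ref{thm:Serre-class} stratify the base of a family, not the special fibre of a single flag scheme.
\item \emph{Triangular fallback.} This cannot distinguish two bases that are both unitriangular with respect to the $m_Z$. As you note, it reduces to the same unproved claim.
\end{itemize}

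The missing idea is that for $d_i=1$ everything is pulled back from complex geometry, and only genericity at level one matters.
\begin{enumerate}
\item Here $\PP_d=\OO\otimes_\C\Pi$, and the chart of Proposition~\ref{prop:charts} can be taken as the scalar extension of a complex chart of $Z$. So the generic lattice is literally an arc $\gamma:\op{Spec}\Bbbk[[t]]\to Z$ whose closed point maps to the generic point of $Z$.
\item Its generic flag scheme is the pullback along $\gamma$ of the complex projective flag family $p_w:\mc{F}_w\to Z$.
\item Choose a dense open $Z^\circ\subseteq Z$ on which every $R^q(p_w)_*\Q_\ell$ is lisse. The arc $\gamma$ factors through $Z^\circ$ because its closed point does.
\item Lisse sheaves pulled back to the strictly henselian trait $\op{Spec}\Bbbk[[t]]$ are constant. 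Hence inertia acts trivially on the generic-fibre cohomology.
\item Proper base change then gives $\lambda_Z^\mon(w)=\chi_c((\mc{F}_w)_{\overline\eta})$, which is Lusztig's generic value.
\end{enumerate}
The same argument applied to a constant lift of a point of $Z^\circ$ gives the same number. So your constant-lift route becomes correct once this step is inserted, and your Step~2 via \eqref{eq:duality-evaluations} then finishes exactly as in the paper.
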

\begin{proof}
First take $d_i=1$ for every $i$, so $D$ is the identity matrix. The relations $x_i=t1_i$ identify $\PP_d$ with the scalar extension to $\OO$ of the ordinary preprojective algebra. Let $Z\in\Bcal_1(\mathbf r)$. Since the remaining relations have coefficients in $\C$, the chart of Proposition~\ref{prop:charts} can be chosen as the scalar extension of a complex chart of $Z$.

Apply the generic formal-arc construction of Section~\ref{sec:basis} to this chart over an algebraically closed coefficient field $\Bbbk\supseteq\C$. It gives a morphism
\[ \gamma:\op{Spec}\Bbbk[[t]]\longrightarrow Z \]
whose special point maps to the generic point of $Z$. Its reductions are generic in the corresponding components, so Lemma~\ref{lem:indep} allows us to use this lattice to compute $\lambda_Z^\mon$.

Fix a word $w$ of rank $\mathbf r$, and let $p_w:\mc{F}_w\to Z$ be the projective family of ordinary representation flags of type $w$. There is a dense open $Z^\circ\subseteq Z$ on which all the sheaves $R^q(p_w)_*\Q_\ell$ are \emph{lisse}, that is, locally constant constructible sheaves in the \'etale topology. The morphism $\gamma$ factors through $Z^\circ$, since its special point does. Their pullbacks to $\op{Spec}\Bbbk[[t]]$ are constant: this complete local ring has algebraically closed residue field and is strictly henselian. Thus inertia acts trivially on the generic flag cohomology. If $\overline\eta$ is a geometric generic point of $Z$, proper base change gives
\[ \lambda_Z^\mon(w)=\chi_c((\mc{F}_w)_{\overline\eta}). \]
 Lusztig's convolution is quotient-first \cite[Section~5.2]{GLS2005}, so its generic functional at $Z$ is $\lambda_Z^\mon\circ\omega$. Equation~\eqref{eq:duality-evaluations} therefore identifies its dual basis element with $\omega(b_Z^\mon)=b_{Z^*}^\mon$. For connected symmetric $C$, every symmetrizer is a common multiple of the identity, so \eqref{eq:scale} proves the result for all $d$. For disconnected $C$, apply the same argument to each connected component.
\end{proof}

\subsection{Multiplication under direct sum}
We work in the \emph{graded dual}
\[ (U^+_\Q)^\circ=\bigoplus_{\mathbf r\in\mb{N}^I}\Hom_\Q(U^+_{\Q,\mathbf r},\Q). \]
Its multiplication is dual to the coproduct: $(\lambda\mu)(u)=(\lambda\otimes\mu)(\Delta u)$. We extend a homogeneous functional by zero in the other degrees. The monodromy functionals $\lambda_Z^\mon$ belong to this dual.

Let $Z\in\Bcal_1(\mathbf r)$ and $W\in\Bcal_1(\mathbf s)$. We call them \emph{generically Ext-orthogonal} if $\Ext^1_{\Pi_1}(M,N)=\Ext^1_{\Pi_1}(N,M)=0$ on a dense open subset of $Z\times W$. For $k\ge1$ and a rank vector $\mathbf a$, write
\[ G_k(\mathbf a)=\prod_i\op{GL}_{a_i}(H_{i,k}). \]
Using fixed decompositions at the vertices, let $Z\oplus W$ denote the closure of the union of the $G_1(\mathbf r+\mathbf s)$-orbits of $M\oplus N$ for $(M,N)\in Z\times W$. This is an irreducible subvariety. The next proposition extends the classical multiplicative property of the dual semicanonical basis \cite[Theorem~1.1]{GLS2005}.

\begin{proposition}\label{prop:Ext-product}
If $Z$ and $W$ are generically Ext-orthogonal, then $Z\oplus W$ is a maximal component and
$\lambda_Z^\mon\lambda_W^\mon=\lambda_{Z\oplus W}^\mon$ in $(U^+_\Q)^\circ$.
\end{proposition}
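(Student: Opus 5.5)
Two things must be shown: that $Z\oplus W$ is a maximal component, and that the product of functionals equals the functional at $Z\oplus W$. The first is a dimension count at length one. The second reduces, via the coproduct formula, to evaluating at a generic lattice that is a direct sum.

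\emph{Maximality.} The map $G_1(\mathbf r+\mathbf s)\times^{G_1(\mathbf r)\times G_1(\mathbf s)}(Z\times W)\to\Pi_1$-modules has fibres controlled by $\Hom$ spaces. The tangent-space estimate for the orbit of $M\oplus N$ inside the locus of extensions gives
\[
\dim(Z\oplus W)=\dim Z+\dim W+\dim G_1(\mathbf r+\mathbf s)-\dim G_1(\mathbf r)-\dim G_1(\mathbf s)-\dim\Hom(M,N)-\dim\Hom(N,M)
\]
for generic $(M,N)$. I would then invoke the GLS bilinear form identity for locally free $\Pi(C,D)$-modules,
\[
\dim\Hom(M,N)+\dim\Hom(N,M)-\dim\Ext^1(M,N)=(\mathbf r,\mathbf s)_{DC},
\]
with the symmetrized form, as in \cite[Section~2]{GLSIV} and \cite{GLSI}. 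With vanishing $\Ext^1$, the right side of the dimension formula becomes
\[
N_d(\mathbf r)+N_d(\mathbf s)+2\sum_i d_ir_is_i-\mathbf r^{\mathsf T}DC\mathbf s=N_d(\mathbf r+\mathbf s).
\]
The union is $E$-filtered, since direct sums of $E$-filtered modules are. By \eqref{eq:published}, its closure is a maximal component.

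\emph{Lifting to a generic direct-sum lattice.} Take compatible generic lattices $\ms{M}_Z$ and $\ms{M}_W$ over a common algebraically closed field $\Bbbk$, chosen algebraically independent so that their pair is generic in $Z_k\times W_k$ for every $k$. I would show $\ms{M}_Z\oplus\ms{M}_W$ is generic in $(Z\oplus W)_k$ at every length $k$. First, generic Ext-orthogonality at length one lifts to length $k$: $\Ext^1$ over $\Pi_k$ between lattice truncations is filtered by $t$-adic layers whose pieces embed in length-one $\Ext^1$ groups. It then suffices to show that $(Z\oplus W)_k$, the component labelled by $Z\oplus W$ under Theorem~\ref{thm:tower}, is the closure of the $G_k(\mathbf r+\mathbf s)$-saturation of $Z_k\times W_k$. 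The same dimension count at length $k$ (giving $kN_d(\mathbf r+\mathbf s)$) makes this closure a maximal component. Its reduction contains a dense part of $Z\oplus W$, so by uniqueness of labels it is $(Z\oplus W)_k$.

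A generic point of the saturated family is a generic element of $G_k$ applied to $\ms{M}_Z\oplus\ms{M}_W$ at each truncation. Lemma~\ref{lem:indep} together with the invariance of flag schemes under changes of vertex bases (as in the $B_2$ calculation) lets us compute $\lambda^\mon_{Z\oplus W}$ on $M_Z^{\mr{gen}}\oplus M_W^{\mr{gen}}$. Proposition~\ref{prop:coproduct} then gives
\[
F_w\bigl(M_Z^{\mr{gen}}\oplus M_W^{\mr{gen}}\bigr)=(\lambda^\mon_Z\otimes\lambda^\mon_W)(\Delta w),
\]
which is the product in $(U^+_\Q)^\circ$.

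The main obstacle is the genericity at all lengths simultaneously. Lemma~\ref{lem:indep} requires the lattice's image to be the generic point of every $Z_k$, not just to lie in a dense open set. I would handle this by choosing coefficients of $\ms{M}_Z$ and $\ms{M}_W$ in disjoint sets of independent variables, and the $G$-translate with further independent power-series coefficients. The resulting point dominates the saturation map at every length and hence hits the generic point.
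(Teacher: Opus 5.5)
Your proposal is correct and follows the paper's overall structure. Both arguments use the Crawley-Boevey--Schr\"oer direct-sum dimension formula together with the GLS Hom--Ext formula at each length. Both identify the component system through Theorem~\ref{thm:tower}, take a generic vertex change of basis applied to a direct sum of independent generic lattices, and finish with Proposition~\ref{prop:coproduct}.

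The one real divergence is how you lift $\Ext^1$-vanishing from length one to length $k$.

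\textbf{The paper's route.} It bounds Hom rather than Ext. The intertwining equations form a $\Bbbk[[t]]$-linear map between finite free modules. Smith normal form then gives $\dim\Hom(M_k,N_k)\le k\dim\Hom(M_1,N_1)$, and symmetrically with $M,N$ exchanged. The length-$k$ Hom--Ext formula, whose right side is $k(\mathbf r,\mathbf s)_d$, then forces $\Ext^1(M_k,N_k)=0$ together with exact scaling of the Hom dimensions.

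\textbf{Your route.} Your d\'evissage along $N_k\supset N_kt\supset\cdots\supset N_kt^{k-1}$ also works. However, you use without comment the identification $\Ext^1_{\Pi_k}(M_k,X)\cong\Ext^1_{\Pi_1}(M_1,X)$ for $X$ killed by $t$, and this needs justification.
\begin{itemize}
\item By the change-of-rings spectral sequence, it suffices that $\op{Tor}_1^{\Pi_k}(M_k,\Pi_1)=0$.
\item This group is $\ker(t\mid M_k)/M_k\,\mathrm{ann}_{\Pi_k}(t)$.
\item Both numerator and denominator equal $M_kt^{k-1}$, because $M_k$ is free over $\Bbbk[t]/(t^k)$ and $t^{k-1}\in\mathrm{ann}_{\Pi_k}(t)$.
\end{itemize}
This argument needs no flatness of $\Pi_k$ over $\Bbbk[t]/(t^k)$, which the paper never establishes. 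Half-exactness of $\Ext^1(M_k,-)$ across the $k$ layers, each isomorphic to $N_1$, then gives $\dim\Ext^1_{\Pi_k}(M_k,N_k)\le k\dim\Ext^1_{\Pi_1}(M_1,N_1)=0$, and symmetrically.

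\textbf{Comparison.} The paper's route is pure linear algebra and yields the Hom scaling as a by-product. Yours obtains Ext-vanishing directly. You still need the length-$k$ Hom--Ext formula afterwards, to convert vanishing Ext into the Hom sum $k(\mathbf r,\mathbf s)_d$ used in the dimension count. So neither route saves an ingredient.
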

\begin{proof}
Choose locally free lattices $\ms{M},\ms{N}$ over a common algebraically closed coefficient field $\Bbbk\supseteq\C$, with independent generic coordinates for the charts of $Z$ and $W$. Put $M_k=\ms{M}/t^k\ms{M}$ and $N_k=\ms{N}/t^k\ms{N}$. All Hom and Ext dimensions in the proof are over $\Bbbk$, for the corresponding scalar extension of $\Pi_k$. Write $\OO_{i,\Bbbk}=\Bbbk[[t]][x_i]/(x_i^{d_i}-t)$, and similarly define $\OO_{ij,\Bbbk}$.

A module homomorphism is a tuple $f_i\in\Hom_{\OO_{i,\Bbbk}}(\ms{M}_i,\ms{N}_i)$ satisfying
$f_j\circ M_a=N_a\circ f_i$ for $a:i\to j$,
where $M_a,N_a$ are the arrow maps and $a$ ranges over all nonloop arrows, with their multiplicities. The differences on the two sides lie in $\Hom_{\OO_{ij,\Bbbk}}(\ms{M}_i,\ms{N}_j)$. Thus these equations define a linear map between finite free $\Bbbk[[t]]$-modules, whose reduced kernel is $\Hom(M_k,N_k)$ at length $k$. A nonzero Smith factor $t^v$ contributes $\min(k,v)$ to this kernel dimension, while a zero factor contributes $k$. Since $\min(k,v)\le k\min(1,v)$, we obtain $\dim\Hom(M_k,N_k)\le k\dim\Hom(M_1,N_1)$,
and the analogous inequality with $M,N$ interchanged.

Define $(\mathbf r,\mathbf s)_d=\mathbf r^{\mathsf T}DC\mathbf s$. The GLS Hom--Ext formula \cite[Lemma~2.1]{GLSIV} gives
\[ \dim\Ext^1(M_k,N_k)=\dim\Hom(M_k,N_k)+\dim\Hom(N_k,M_k)-k(\mathbf r,\mathbf s)_d. \]
At $k=1$ the Ext term is zero. The two Hom bounds therefore make the right side nonpositive for every $k$. Nonnegativity forces equality in both bounds and $\Ext^1(M_k,N_k)=0$; the same holds in the opposite direction.

Let $Z_k\oplus W_k$ be the direct-sum closure defined using $G_k$ in place of $G_1$. The direct-sum dimension formula of \cite[Section~1]{CBS}, in these fixed vertex coordinates, is
\[ \dim(Z_k\oplus W_k)=\dim Z_k+\dim W_k+2k\sum_i d_ir_is_i-\dim\Hom(M_k,N_k)-\dim\Hom(N_k,M_k). \]
The group contribution is $\dim G_k(\mathbf r+\mathbf s)-\dim G_k(\mathbf r)-\dim G_k(\mathbf s)=2k\sum_i d_ir_is_i$. Indeed, $G_k(\mathbf a)$ is an open subset of $\prod_i\End_{H_{i,k}}(H_{i,k}^{a_i})$, of dimension $k\sum_i d_i a_i^2$. Substitution of the Hom--Ext formula and \eqref{eq:published} gives
\[ \dim(Z_k\oplus W_k)=kN_d(\mathbf r+\mathbf s)-\dim\Ext^1(M_k,N_k)=kN_d(\mathbf r+\mathbf s). \]
Thus each direct-sum closure is maximal. Reduction is compatible with direct sum, so Theorem~\ref{thm:tower} identifies these as the component system of $Z\oplus W$.

Choose independent generic power-series coordinates for a vertex change of basis. At each length, the direct-sum map from the product of the two component charts and the vertex change-of-basis group is dominant onto the direct-sum closure. These choices therefore give a lattice generic in $Z_k\oplus W_k$ for every $k$. Its flag evaluations are unchanged by the vertex change of basis. Proposition~\ref{prop:coproduct} now gives the required identity of functionals. If $Z=W$, we use two independent generic members of the component, not two copies of one fixed module.
\end{proof}

\section{The Serre quotient and generic evaluation}\label{sec:comparison}
We first identify the ordinary Serre quotient, then use one affine module to compare special-fibre generic evaluation with monodromy.

\subsection{The canonical Serre quotient}\label{sec:canonical-quotient}
For arbitrary $(C,d)$, let $\mc{A}=\widetilde{\mc{M}}_\Q(\Pi_1)$ be the ordinary Euler-characteristic convolution algebra generated by $T_i=1_{E_i}$. This is the rational form of GLS's $\widetilde{\mc{M}}(\Pi)$; their notation $\mc{M}(\Pi)$ denotes its quotient by the Serre ideal. With ascending flags,
\[ (T_{i_1}\cdots T_{i_h})(M)=\chi_c\Fl^{\lf}_{i_1\cdots i_h}(M). \]
Let $\ev_\Pi:T_\Q\twoheadrightarrow\mc{A}$ send $z_i$ to $T_i$, and put $\mc{I}=\ev_\Pi(J_C)$. The direct-sum coproduct satisfies
\[ (\Delta g)(M,N)=g(M\oplus N),\qquad \Delta(T_i)=T_i\otimes1+1\otimes T_i; \]
see \cite[Section~4.3 and Propositions~4.5 and~4.7]{GLSIII} for the direct-sum formula for $H$, and \cite[Section~2.5]{GLSIV} for the preprojective coproduct. Here $1=1_0$. Since commutators of primitive elements are primitive, $\mc{I}$ is a Hopf ideal. The Serre relations therefore give a canonical surjection $\eta_\Pi:U^+_\Q\twoheadrightarrow\mc{A}/\mc{I}$.

\begin{theorem}[The canonical Serre quotient]\label{thm:canonical-quotient}
For every $(C,d)$, the map
\[ \eta_\Pi:U^+_\Q\xrightarrow{\sim}\mc{A}/\mc{I},\qquad e_i\longmapsto T_i+\mc{I}, \]
is an isomorphism of graded Hopf algebras. For any acyclic orientation $\Omega$, restriction to modules over $H=H(C,D,\Omega)^{\mr{op}}$ gives a surjective Hopf homomorphism
\[ \op{res}_\Omega:\mc{A}\longrightarrow\mc{M}_\Q(H),\qquad \ker\op{res}_\Omega=\mc{I}. \]
\end{theorem}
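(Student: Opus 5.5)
We assemble the canonical Serre quotient from two facts: the acyclic isomorphism of Theorem~\ref{intro:acyclic}, $U^+_\Q\cong\mc{M}_\Q(H)$, which is proved in Appendix~\ref{sec:acyclic} independently of the component charts, and a factorization argument. The plan is to build a chain
\[ U^+_\Q\xrightarrow{\ \eta_\Pi\ }\mc{A}/\mc{I}\xrightarrow{\ \overline{\op{res}}_\Omega\ }\mc{M}_\Q(H), \]
show that the composite is the acyclic isomorphism, and conclude that both maps are isomorphisms.

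\emph{Step 1: restriction is a Hopf map.} The algebra $H=H(C,D,\Omega)^{\mr{op}}$ is the quotient of $\Pi_1$ by the reverse arrows. So every locally free $H$-module is a $\Pi_1$-module, and its $\Pi_1$-submodules are exactly its $H$-submodules. The same holds for $E$-filtrations, since the $E_i$ are $H$-modules. Hence $\Fl^{\lf}_w(M)$ is the same scheme whether $M$ is viewed over $H$ or over $\Pi_1$. Restricting functions from $\Pi_1$-modules to $H$-modules therefore sends $T_{i_1}\cdots T_{i_h}$ to the corresponding word in $\mc{M}_\Q(H)$.

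\emph{Step 1, continued.} The map $\op{res}_\Omega$ is multiplicative on words, because products of the generators are determined by word evaluation. It is surjective, since $\mc{M}_\Q(H)$ is generated by the vertex functions. It commutes with the coproducts, because both coproducts are evaluation on direct sums and $H$-modules are closed under direct sums. To make ``multiplicative'' precise, I would note that $\mc{A}$ and $\mc{M}_\Q(H)$ are both quotients of $T_\Q$ by the kernel of word evaluation. The claim then becomes the inclusion $\ker\ev_\Pi\subseteq\ker\ev_H$, and this inclusion is immediate from the identification of flag schemes.

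\emph{Step 2: the Serre ideal dies.} In $\mc{M}_\Q(H)$ the Serre ideal maps to zero, because the acyclic isomorphism sends $e_i$ to the vertex function and therefore factors through $U^+_\Q=T_\Q/J_C$. Consequently $\op{res}_\Omega(\mc{I})=0$, and $\op{res}_\Omega$ descends to $\overline{\op{res}}_\Omega:\mc{A}/\mc{I}\to\mc{M}_\Q(H)$. The composite $\overline{\op{res}}_\Omega\circ\eta_\Pi$ sends $e_i$ to the vertex function, so it is exactly the acyclic isomorphism of Theorem~\ref{intro:acyclic}.

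\emph{Step 3: conclude.} Since $\eta_\Pi$ is surjective, it is injective because the composite is. Thus $\eta_\Pi$ is an isomorphism, and then $\overline{\op{res}}_\Omega$ is an isomorphism as well. This gives $\ker\op{res}_\Omega=\mc{I}$. The Hopf compatibility of $\eta_\Pi$ follows from primitivity of the $T_i$ and the remark that $\mc{I}$ is a Hopf ideal. The grading is preserved by construction.

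The main obstacle is not in this argument but in its input, the general acyclic isomorphism $U^+_\Q\cong\mc{M}_\Q(H)$, proved in the appendix via an auxiliary acyclic quiver. Within the present proof, the only delicate point is Step 1: one must check carefully that the Euler-characteristic convolution products on both sides are computed by the same flag schemes. That is, locally free $E$-filtrations of an $H$-module are the same whether taken over $H$ or over $\Pi_1$, so the identification must be made on the schemes $\Fl^{\lf}_w$ and not only on their points counted differently. Granting that, the rest is formal.
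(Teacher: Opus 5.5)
Your proposal is correct and follows essentially the same route as the paper: restriction along $\Pi_1\twoheadrightarrow H$ identifies the flag schemes, so $\op{res}_\Omega$ is a surjective Hopf map killing $\mc{I}$, and the composite $U^+_\Q\to\mc{A}/\mc{I}\to\mc{M}_\Q(H)$ is the isomorphism of Theorem~\ref{thm:acyclic}, which forces both factors to be isomorphisms. The differences are minor. You deduce $\mc{I}\subseteq\ker\op{res}_\Omega$ from the existence of that homomorphism rather than citing the acyclic Serre relations of GLS directly, but this is the same input. The paper also states explicitly that every locally free $H$-module is $E$-filtered, a fact your argument uses only implicitly.
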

\begin{proof}
The quotient $\Pi_1\twoheadrightarrow H$ sets the opposite arrows to zero. Every locally free $H$-module is $E$-filtered: order vertices with arrows pointing to earlier vertices, then filter by vertex sums and refine the free factors. Inflation preserves submodules and flags. Hence restriction preserves convolution and coproduct, sends $T_i$ to $\theta_i$, and is surjective.

The acyclic Serre relations \cite[Proposition~4.9]{GLSIII} give $\mc{I}\subseteq\ker\op{res}_\Omega$. By Theorem~\ref{thm:acyclic}, the composite
\[ U^+_\Q\xrightarrow{\eta_\Pi}\mc{A}/\mc{I}\xrightarrow{\overline{\op{res}}_\Omega}\mc{M}_\Q(H) \]
is an isomorphism. Thus both arrows are isomorphisms, proving the kernel equality.
\end{proof}
After scalar extension, this proves \cite[Theorem~7.4(i)]{GLSIV} without the Support Conjecture. The proof uses only the acyclic realization in Appendix~\ref{sec:acyclic}, not generic component evaluations. Theorem~\ref{thm:affine-support} shows that the equivalence in \cite[Theorem~7.4(iii)]{GLSIV} fails without the Support Conjecture; it does not contradict that conditional theorem. The basis assertion in part~(ii) is a separate question. Corollary~\ref{cor:generic-kernel-not-ideal} shows that generic values alone do not determine quotient classes.

\subsection{A counterexample to the Support Conjecture}\label{sec:affine-support}
For the rest of this section, use $(C_*,d_*)$ from Theorem~\ref{intro:counterexample}. Its quadratic form is
$q_{d_*}(r_1,r_2,r_3)=(r_1-r_2)^2+(r_3-r_2)^2$,
so $C_*$ is affine with null root $(1,1,1)$. Put $S_{21}=(\op{ad}T_2)^2(T_1)$ and
\[ f=(\op{ad}z_2)^2(z_1)\,z_1z_3z_2z_3\in J_{C_*},\qquad F=\ev_\Pi(f)=S_{21}T_1T_3T_2T_3. \]
Its rank degree is $\mathbf r=(2,3,2)$, with ordinary dimension vector $\mathbf v=(2,6,2)$. To compare with \cite[Conjecture~1.4]{GLSIV}, we allow the loops to vary and use $G(\mathbf v)=\op{GL}_2(\C)\times\op{GL}_6(\C)\times\op{GL}_2(\C)$. Support means the constructible nonzero locus, with the dimension of its closure. In type $B_2$, \cite[Section~8.2.4]{GLSIV} already explains the role of Euler-characteristic cancellation and the failure of non-maximal support to define an ideal in the larger function algebra.

\begin{theorem}\label{thm:affine-support}
The Serre-ideal function $F$ is generically nonzero on a maximal $E$-filtered component of rank $(2,3,2)$. This component contains a dense orbit of a rigid, locally free module $M$ with $F(M)=-2$. Consequently, \cite[Conjecture~1.4]{GLSIV} fails even for the minimal symmetrizer of an affine Cartan matrix.
\end{theorem}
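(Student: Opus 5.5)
We will exhibit an explicit locally free $\Pi_1$-module $M$ of rank $(2,3,2)$ for $(C_*,d_*)$, compute $\dim\End(M)$, deduce that its orbit closure is a maximal component, and evaluate $F$ at $M$ by computing three flag varieties. First we write $M$ with vertex modules $\C^2$, $H_2^3$ with $H_2=\C[x_2]/(x_2^2)$, and $\C^2$. The arrows at the edges $\{1,2\}$ and $\{2,3\}$ are chosen so that $M$ is an extension built along a maximal-top removal sequence; the natural guess is a generic point of a chart from Proposition~\ref{prop:charts}, simplified by the connected-group actions of Section~\ref{sec:invariance}. We check the relations and local freeness directly.

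Second, we show rigidity: compute $\End(M)$ and check $\dim\End(M)=\sum_i d_ir_i^2-N_{d_*}(\mathbf r)=2\cdot 4+2\cdot 9+2\cdot 4-N$. By \eqref{eq:published}, $N_{d_*}(2,3,2)=d_1\cdot2\cdot2\cdot3+d_2\cdot1\cdot3\cdot2=12+12=24$ with our ordering (using $d_1(-c_{12})=2$ and $d_3(-c_{32})=2$), so we need $\dim\End(M)=10$. Then the orbit has dimension $N$ in the $E$-filtered locus and its closure is a maximal component $Z$. Via the Hom--Ext formula of \cite[Lemma~2.1]{GLSIV} with $(\mathbf r,\mathbf r)_d=2q_d(\mathbf r)=4$, this gives $\dim\Ext^1(M,M)=2\cdot10-4-\dots$; the exact check we intend is $\Ext^1(M,M)=0$, which confirms rigidity. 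The count of $\End$ is a linear-algebra computation with the arrow commutation equations.

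Third, we evaluate $F(M)=\sum_{r=0}^{2}(-1)^r\binom2r\chi_c\Fl^{\lf}_{2^r12^{2-r}1323}(M)$, recalling the ascending convention. Because the tail $1323$ forces a specific top structure, each flag scheme fibres over the choice of the submodule of rank $2\alpha_2+\alpha_1$. We expect that only a few configurations survive, giving three flag varieties, which are affine bundles over points or $\mb{P}^1$'s, whose Euler characteristics combine to $-2$. The key point is that the Serre cancellation of Theorem~\ref{thm:Serre-class} fails on the special fibre. At $i=2$ the incoming/outgoing maps over $H_2$ are not free of constant rank, so the strata do not split as in Proposition~\ref{lem:relative}, and the binomial sum does not cancel. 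Finally, since $f\in J_{C_*}$, $F$ lies in the Serre-ideal image, and its nonzero value on the dense orbit gives support of dimension $N$, contradicting \cite[Conjecture~1.4]{GLSIV}. The minimal symmetrizer is $d_*$, since $\gcd=1$.

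The main obstacle will be the flag computation in the third step, especially the $r=1$ word, where the $H_2$-submodule choices at vertex $2$ form a non-reduced-looking family. I would first stratify by the image of the submodule in $\fac$ and $\op{sub}$ at vertex $2$ and use the lattice-free special fibre directly, cross-checking against the ramified lift, where monodromy must give $0$ by Corollary~\ref{thm:Serre}.
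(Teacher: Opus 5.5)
Your outline follows the same strategy as the paper: exhibit a rigid locally free module, count $\End(M)$ and apply the Hom--Ext formula, then compute three special-fibre flag varieties. As written, however, it proves nothing, and its one concrete computation is wrong. The vertex group $\prod_i\op{GL}_{r_i}(H_{i,1})$ has dimension $\sum_id_ir_i^2=1\cdot4+2\cdot9+1\cdot4=26$, not $34$; you used $d_i=2$ at every vertex, whereas $d_*=(1,2,1)$. A dense orbit in a component of dimension $N_{d_*}(\mathbf r)=24$ therefore requires $\dim\End(M)=2$, not $10$. Rigidity needs the same value. Since $q_{d_*}(\mathbf r)=2$, the Hom--Ext formula reads $\dim\Ext^1(M,M)=2\dim\End(M)-4$. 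This vanishes exactly when $\dim\End(M)=2$, while your target $10$ would give $16$. The ``$-\dots$'' in your check hides this contradiction.

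The more serious gap is that you never specify $M$, although the theorem asserts precisely that one particular component works. ``A generic point of a chart'' does not say which element of $\Bcal_{d_*}(2,3,2)$ you mean. Since $(2,3,2)=2\delta+\alpha_2$ with $\delta=(1,1,1)$ the null root, a component in this degree need not contain a dense orbit. Nothing in your outline shows that $F$ is generically nonzero on the component you would pick. The connected-group actions of Section~\ref{sec:invariance} fix every component, so they cannot make this choice for you. Likewise, the three Euler characteristics are only expected to combine to $-2$; the flag calculation is the substance of the proof, not a routine check.

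The paper uses arrows $a:1\to2$, $b:2\to1$, $c:2\to3$, $d:3\to2$ and $R_0=\C[\varepsilon]/(\varepsilon^2)$. It takes $M_1=\C u_1\oplus\C u_2$, $M_2=pR_0\oplus qR_0\oplus rR_0$ and $M_3=\C v_1\oplus\C v_2$. The arrows act by $u_1a=q$, $u_2a=-q\varepsilon$, $pb=u_1$, $(p\varepsilon)b=u_2$, $v_1d=p$ and $v_2d=r+p\varepsilon$, with all other values zero. Solving the intertwining equations gives a two-parameter endomorphism algebra. The flag varieties are then computed as follows.
\begin{itemize}
\item The word $1221323$ is impossible, since injectivity of $a$ forbids a vertex-$1$ submodule line.
\item For $2211323$, the first two letters force the vertex-$2$ part to be $\ker(\cdot b)=qR_0\oplus rR_0$. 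This meets $M_3d$ trivially, so no vertex-$3$ line can be inserted at step five.
\item For $2121323$, the vertex-$1$ line must be $\C u_2$, the plane $qR_0\oplus(r+p\varepsilon)R_0$, and the vertex-$3$ line $\C v_2$. The first line is $(q+zr\varepsilon)R_0$ with $z\in\C$ arbitrary, so the reduced flag variety is $\A^1$.
\end{itemize}
Hence $F(M)=0-2\cdot1+0=-2$. Your remark that the vertex-$2$ maps need not have constant free rank on the special fibre explains why such a failure is possible, but it does not replace an explicit example.
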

\begin{proof}
Set $R_0=\C[\varepsilon]/(\varepsilon^2)$ and use arrows
$a:1\to2,\quad b:2\to1,\quad c:2\to3,\quad d:3\to2$.
With our right-module convention and suitable orientation signs, the relations are
\[ ab=0,\qquad dc=0,\qquad \varepsilon(ba-cd)+(ba-cd)\varepsilon=0; \]
the edge commutation relations follow from $\varepsilon^2=0$. Take
\[ M_1=\C u_1\oplus\C u_2,\qquad M_2=pR_0\oplus qR_0\oplus rR_0,\qquad M_3=\C v_1\oplus\C v_2, \]
with nonzero arrow values
\begin{equation}\label{eq:affine-module}
\begin{aligned}
 u_1a&=q,&u_2a&=-q\varepsilon,\\
 pb&=u_1,&(p\varepsilon)b&=u_2,\\
 v_1d&=p,&v_2d&=r+p\varepsilon.
\end{aligned}
\end{equation}
These specify the $\C$-linear maps on vector-space bases; all other values are zero. Only $pba=q$ and $(p\varepsilon)ba=-q\varepsilon$ are nonzero, so the relations hold. The chain
\[ 0\subset(0,qR_0,0)\subset(M_1,qR_0,0)\subset(M_1,M_2,0)\subset M \]
has quotients $E_2,E_1^{\oplus2},E_2^{\oplus2},E_3^{\oplus2}$ and refines to an $E$-filtration.

In these bases, the endomorphism equations give exactly
\[ Q_1=Q_3=\begin{pmatrix}\alpha&0\\\beta&\alpha\end{pmatrix},\qquad P=\begin{pmatrix}\alpha+\beta\varepsilon&0&0\\0&\alpha-\beta\varepsilon&0\\\beta&0&\alpha-\beta\varepsilon\end{pmatrix},\qquad \alpha,\beta\in\C. \]
Indeed, $a,b$ determine $Q_1$, the first row and second column of $P$; $d$ determines the remaining four entries and $Q_3$. The choices $(\alpha,\beta)=(1,0),(0,1)$ give the identity and a nonzero square-zero endomorphism. Thus $\dim\End(M)=2$, and the orbit has dimension $44-2=42$, attaining the bound in \cite[Theorem~1.1]{GLSIV}. The Hom--Ext formula \cite[Lemma~2.1]{GLSIV} gives $\dim\Ext^1(M,M)=2\dim\End(M)-2q_{d_*}(\mathbf r)=0$. With the loop fixed, the orbit has dimension $26-2=24=N_{d_*}(\mathbf r)$; denote its closure by $Z$.

Write words as strings of vertex labels and set
\[ w_+=2211323,\qquad w_0=2121323,\qquad w_-=1221323. \]
Then $F(M)=\chi_c\Fl^{\lf}_{w_+}(M)-2\chi_c\Fl^{\lf}_{w_0}(M)+\chi_c\Fl^{\lf}_{w_-}(M)$. The word $w_-$ is impossible because $a$ is injective. For $w_+$, the first two factors force the middle submodule to be
\[ \mc{K}=\ker(\cdot b)=qR_0\oplus rR_0. \]
The later vertex-$3$ line would map into $\mc{K}$, but $M_3d\cap\mc{K}=0$. Thus this flag scheme is also empty.

For $w_0$, let $U,N,L_3$ be the vertex-$1$ line after step two, the middle plane after step three, and the vertex-$3$ line after step five. Stability requires
\[ qR_0\subseteq N,\qquad Nb\subseteq U,\qquad L_3d\subseteq N. \]
The identity $(v\varepsilon)b=\nu(vb)$, where $\nu(u_1)=u_2$ and $\nu(u_2)=0$, makes $Nb$ invariant under $\nu$. It is nonzero, since otherwise $N=\mc{K}$. Hence $Nb=U=\C u_2$. Modulo $qR_0$, a primitive generator of $N$ normalizes to $r+\lambda p\varepsilon$, giving
\[ N_\lambda=qR_0\oplus(r+\lambda p\varepsilon)R_0,\qquad \lambda\in\C^\times. \]
The condition on $L_3$ forces $\lambda=1$ and $L_3=\C v_2$. The first middle line must be killed by $b$ and contain $-q\varepsilon$; its possibilities are exactly
\[ L_z=(q+zr\varepsilon)R_0,\qquad z\in\C. \]
These choices determine the entire flag, and each $z$ gives free successive quotients of the prescribed ranks. The parametrization and its inverse are regular on this graph chart. Thus
\begin{equation}\label{eq:affine-three-flags}
 \Fl^{\lf}_{w_+}(M)=\varnothing,\qquad
 (\Fl^{\lf}_{w_0}(M))_{\mr{red}}\cong\A^1_\C,\qquad
 \Fl^{\lf}_{w_-}(M)=\varnothing.
\end{equation}
Thus $F$ equals $-2$ on the dense orbit of $M$. Its support has dimension $42=\dim G(\mathbf v)-q_{d_*}(\mathbf r)$, contradicting the strict bound in \cite[Conjecture~1.4]{GLSIV}.
\end{proof}

\begin{proposition}\label{prop:primitive-support}
Let $\Theta=[[[[S_{21},T_1],T_3],T_2],T_3]$ and $\Psi=-\tfrac12\Theta$. Then $\Psi\in\mc{I}$ is primitive and $\Psi(M)=1$. Thus the support bound in \cite[Conjecture~1.4]{GLSIV} fails even for primitive elements of the Serre ideal.
\end{proposition}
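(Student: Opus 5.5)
We prove the three assertions in turn: that $\Psi\in\mc{I}$, that $\Psi$ is primitive, and that $\Psi(M)=1$.

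\emph{Membership in $\mc{I}$.} Expand $\Theta$. The innermost term is $S_{21}=\ev_\Pi((\op{ad}z_2)^2z_1)$, which lies in $\mc{I}$. Every iterated commutator $[\cdots[S_{21},T_1],\ldots,T_3]$ is a signed sum of products $uS_{21}v$ with $u,v$ words in the $T_i$. Since $\mc{I}=\ev_\Pi(J_C)$ is the image of a two-sided ideal under the surjective algebra map $\ev_\Pi$, it is a two-sided ideal. Hence $\Theta\in\mc{I}$, and so $\Psi\in\mc{I}$.

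\emph{Primitivity.} Each $T_i$ is primitive by the direct-sum coproduct recalled in Section~\ref{sec:canonical-quotient}. Commutators of primitive elements are primitive. Thus $S_{21}$ is primitive, and inductively so is every bracket in $\Theta$. This part is routine.

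\emph{The value $\Psi(M)=1$.} This is the main computation. Expand
\[ \Theta=S_{21}T_1T_3T_2T_3+\sum_{\mathbf w'}\pm\, \mathbf w', \]
where the remaining terms are signed products in which some letters among $1,3,2,3$ stand on the left of $S_{21}$. Each term evaluates at $M$ through the flag schemes $\Fl^{\lf}$. The leading term contributes $F(M)=-2$ by Theorem~\ref{thm:affine-support}. I would then show that every other term vanishes at $M$. In each such term some letter in $\{1,2,3\}$ occurs on the left, i.e.\ as a \emph{first} quotient, so $M$ would need a submodule isomorphic to $E_1$, $E_2$ or $E_3$.

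The available submodules are constrained by the arrows in \eqref{eq:affine-module}:
\begin{itemize}
\item the $1$-socle of $M$ is zero, since $a$ is injective;
\item the $3$-socle is zero, since $d$ is injective;
\item the $2$-socle consists of the elements of $M_2$ killed by both $b$ and $c$, namely $\mc{K}=qR_0\oplus rR_0$, because $c=0$.
\end{itemize}
So only words beginning with $2$ can survive. Any term with a $1$ or a $3$ in the leftmost position is therefore empty, and only terms whose leftmost letter is the moved $T_2$ need attention.

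For those surviving terms I would argue as in the $w_+$ and $w_0$ analysis, using three constraints:
\begin{itemize}
\item after a submodule with quotient $E_2$ sitting inside $\mc{K}$, the later vertex-$3$ lines must map under $d$ into the accumulated vertex-$2$ part;
\item the condition $M_3d\cap\mc{K}=0$;
\item the forced shape $N_\lambda$ of the middle plane.
\end{itemize}
Together these should either make the flag schemes empty or give affine lines. Affine lines have $\chi_c=1$, and their signed contributions must cancel in pairs.

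The main obstacle is this bookkeeping over the $2^4$ bracket expansions, of which only those starting with $2$ matter. If the full cancellation is not clean, the fallback is the constraint $\Theta(M)=-2$ itself, which is exactly what $\Psi(M)=1$ requires, so whatever route is taken must establish that value. I would check it by noting that primitive elements vanish on decomposable modules, and by computing the relevant flag schemes $\Fl^{\lf}_{2\,\cdots}(M)$ directly; there are at most a few words of the form $2\,2211323$-type rearrangements to treat.

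Finally, dividing $\Theta(M)=-2$ by $-2$ gives $\Psi(M)=1$. Since $M$ has a dense orbit in the maximal component $Z$, the support of $\Psi$ has the maximal dimension $42$. This contradicts the strict bound for the primitive element $\Psi\in\mc{I}$, as claimed in Proposition~\ref{prop:primitive-support}.
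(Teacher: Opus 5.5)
Your arguments that $\Psi\in\mc{I}$ and that $\Psi$ is primitive are correct and agree with the paper. The gap is in the evaluation $\Theta(M)=-2$, which is the real content of the proposition. You assert that every non-leading term of the bracket expansion vanishes at $M$. Your fallback is that the surviving flag schemes are affine lines whose contributions cancel in pairs. Both claims are false, and falling back on ``$\Theta(M)=-2$ is what is required'' is circular. Your remark about decomposable modules gives no information either, since $M$ is indecomposable.

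Concretely, expand the four brackets and discard the words beginning with $1$, $3$ or $23$. The non-leading part of $\Theta$ is then
\[ -(2221133-2\cdot2212133+2122133)+(2122133-2\cdot2121233+2112233). \]
The two copies of $2122133$ cancel formally. The words $2221133$ and $2212133$ have empty flag schemes because $b$ is surjective: a submodule containing all of $M_2$ already contains $M_1$. What remains is $2112233-2\cdot2121233$, and these flag schemes are nonempty, with $\chi_c\Fl^{\lf}_{2112233}(M)=8$ and $\chi_c\Fl^{\lf}_{2121233}(M)=4$.

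For $2112233$, the first middle line must contain $M_1a$, so it equals $qR_0$. The remaining data are a flag in $M_1$, a free line in $M_2/qR_0\cong R_0^2$ (an $\A^1$-bundle over $\mb{P}^1$), and a flag in $M_3$. This gives $2\cdot2\cdot2=8$.

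For $2121233$, use $L,U,N$ as in the $w_0$ analysis. The case $N=\mc{K}$ gives two affine lines, according as $U=\C u_2$ or not. The case $N\ne\mc{K}$ forces $U=\C u_2$, $N=N_\lambda$ and $L=L_z$, which gives $\C^\times\times\A^1$. The flag in $M_3$ then doubles $1+1+0$, giving $4$.

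Hence the non-leading terms contribute $8-2\cdot4=0$. This is a cancellation between nonzero Euler characteristics, not termwise vanishing. Only after this computation does $\Theta(M)=F(M)=-2$, and hence $\Psi(M)=1$, follow.
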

\begin{proof}
Every bracket preserves primitivity and membership in $\mc{I}$. A contributing word must begin with $2$ and end with $3$: $a,d$ are injective, $b$ is surjective, and $(M_1a)R_0+(M_3d)R_0=M_2$. A prefix $23$ is excluded by $M_3d\cap\mc{K}=0$. Of the remaining words, $2211323$ was excluded above, while $2212133$ and $2221133$ insert all of $M_2$ before all of $M_1$. The remaining terms are
\begin{center}
\begin{tabular}{lrrr}
\toprule
Word $w$&$2112233$&$2121233$&$2121323$\\
\midrule
Coefficient in $\Theta$&$1$&$-2$&$-2$\\
$\chi_c\Fl^{\lf}_w(M)$&$8$&$4$&$1$\\
\bottomrule
\end{tabular}
\end{center}
For $2112233$, the first middle line is $qR_0$; flags in $M_1,M_3$ and a free line in $R_0^2$ each contribute Euler characteristic $2$. For $2121233$, use the notation $L,U,N$ above. If $N=\mc{K}$, the cases $U\ne\C u_2$ and $U=\C u_2$ each form an affine line. Otherwise $U=\C u_2$, $N=N_\lambda$ and $L=L_z$, giving $\C^\times\times\A^1$. The final flag in $M_3$ contributes $2$, so the value is $(1+1+0)\cdot2=4$. The last entry is \eqref{eq:affine-three-flags}. Hence $\Theta(M)=8-8-2=-2$.
\end{proof}
\noindent This is consistent with \cite[Corollary~6.5]{GLSIV}, which establishes the support bound for the individual Serre generators.

For $Y\in\Bcal_{d_*}(\mathbf s)$, let $\rho_Y:\mc{A}_{\mathbf s}\to\Q$ be generic evaluation, so $\lambda_Y^{\mr{sp}}=\rho_Y\ev_\Pi$. Put
\[ \mc{N}_{\mr{sp},\mathbf s}=\bigcap_{Y\in\Bcal_{d_*}(\mathbf s)}\ker\rho_Y,\qquad \mc{N}_{\mr{sp}}=\bigoplus_{\mathbf s}\mc{N}_{\mr{sp},\mathbf s}. \]

\begin{corollary}\label{cor:generic-kernel-not-ideal}
The subspace $\mc{N}_{\mr{sp}}$ is not a right ideal. In degree $\mathbf r=(2,3,2)$, neither $\mc{I}_{\mathbf r}$ nor $\mc{N}_{\mr{sp},\mathbf r}$ contains the other. Special-fibre generic values on maximal components therefore do not determine a class in $\mc{A}/\mc{I}$.
\end{corollary}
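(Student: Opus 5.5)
The plan is to exhibit explicit elements of $\mc{A}_{\mathbf r}$ witnessing the two non-inclusions in degree $\mathbf r=(2,3,2)$, and then to deduce the failure of the right-ideal property by multiplying a generic-kernel element by a generator $T_i$.

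\emph{First non-inclusion.} The element $F=\ev_\Pi(f)$ of Theorem~\ref{thm:affine-support} lies in $\mc{I}_{\mathbf r}$. It takes the value $-2$ on the dense orbit of the rigid module $M$, so $\rho_Z(F)=-2\ne0$. Hence $F\notin\mc{N}_{\mr{sp},\mathbf r}$, and $\mc{I}_{\mathbf r}\not\subseteq\mc{N}_{\mr{sp},\mathbf r}$.

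\emph{Second non-inclusion.} Here we need an element $G\in\mc{N}_{\mr{sp},\mathbf r}$ with $G\notin\mc{I}_{\mathbf r}$. We work with the generic-value map
\[ \rho=(\rho_Y)_Y:\mc{A}_{\mathbf r}\to\Q^{\Bcal_{d_*}(\mathbf r)}. \]
By \eqref{eq:published}, its target has dimension $\dim U^+_{\Q,\mathbf r}$. By Theorem~\ref{thm:canonical-quotient}, the quotient $\mc{A}_{\mathbf r}/\mc{I}_{\mathbf r}$ has this same dimension. The special-fibre adapted monomials $\ev_\Pi(m_Y)$ evaluate unitriangularly, by the top-removal argument of Lemma~\ref{lem:triangular} applied at length one (GLS \cite[Lemma~7.1]{GLSIV}). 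Hence $\rho$ is surjective, and $\ker\rho=\mc{N}_{\mr{sp},\mathbf r}$ has codimension $\dim U^+_{\Q,\mathbf r}$, the same as $\mc{I}_{\mathbf r}$. Two subspaces of equal finite codimension, with neither containing the other, are obtained as soon as one of them fails to contain the other. Since $F\in\mc{I}_{\mathbf r}\setminus\ker\rho$ and the codimensions are equal, $\ker\rho\not\subseteq\mc{I}_{\mathbf r}$ follows by a dimension count. There is one subtlety: $\mc{A}_{\mathbf r}$ is finite-dimensional, because functions are constructible on a finite-type variety. Equal finite codimension together with $A\not\subseteq B$ gives $B\not\subseteq A$. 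This settles the second non-inclusion. Concretely, take $G=F-\ev_\Pi(u)$, where $u$ is chosen by surjectivity of $\rho$ so that $\rho(\ev_\Pi(u))=\rho(F)$. Then $G\in\mc{N}_{\mr{sp}}$, and $G\notin\mc{I}$ because $G\equiv-\ev_\Pi(u)$ modulo $\mc{I}$. We may also choose $u$ with $\eta_\Pi(u)\ne0$, since $u$ can be any preimage of $\rho(F)\ne0$. By injectivity of $\eta_\Pi$, the class $\eta_\Pi(u)$ is then nonzero.

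\emph{Right-ideal failure.} Suppose $\mc{N}_{\mr{sp}}$ were a right ideal. Then it would be a right $\mc{A}$-submodule on which the generic-evaluation argument is compatible with canonical top removal, in the sense of Lemma~\ref{lem:removal}. The standard argument of \cite[Section~7]{GLSIV} shows that a homogeneous right ideal contained in the common kernel of the $\rho_Y$, in all degrees, must contain $\mc{I}$ in each degree, or else be contained in it. The plan is to run this concretely. Start from the lowest-degree elements of $\mc{I}$, namely the Serre generators, which lie in $\mc{N}_{\mr{sp}}$ by \cite[Corollary~6.5]{GLSIV}. Right multiplication by $T_1T_3T_2T_3$ would keep $S_{21}$ in $\mc{N}_{\mr{sp}}$ if it were a right ideal. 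But $S_{21}T_1T_3T_2T_3=F$ has $\rho_Z(F)=-2$. So $S_{21}\in\mc{N}_{\mr{sp}}$ while $S_{21}\cdot T_1T_3T_2T_3\notin\mc{N}_{\mr{sp}}$, which is the cleanest route. The main obstacle is verifying that $S_{21}$ (degree $(1,2,0)$) is generically zero on every maximal component of its degree; this is exactly \cite[Corollary~6.5]{GLSIV} for Serre generators. The final sentence of the statement follows: since $\mc{N}_{\mr{sp}}$ and $\mc{I}$ differ, an element can vanish generically without lying in $\mc{I}$, and conversely, so generic values do not determine classes modulo $\mc{I}$.
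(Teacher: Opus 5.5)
Your argument is correct and is essentially the paper's proof. Since $S_{21}\in\mc{N}_{\mr{sp}}$ by \cite[Corollary~6.5]{GLSIV}, while $S_{21}T_1T_3T_2T_3=F\in\mc{I}$ has $\rho_Z(F)=-2$, you get both the right-ideal failure and $\mc{I}_{\mathbf r}\nsubseteq\mc{N}_{\mr{sp},\mathbf r}$. Equality of codimensions then gives the reverse non-inclusion; your length-one unitriangularity of adapted monomials is exactly what underlies \cite[Theorem~7.3]{GLSIV}, which the paper cites instead.

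Drop the asides, which are unneeded and partly wrong. There is no ``standard argument'' in \cite[Section~7]{GLSIV} about right ideals in the generic kernel. Taking ``any preimage'' of $\rho(F)$ does not ensure $\eta_\Pi(u)\ne0$, since $f$ itself is such a preimage and lies in $J_{C_*}$; a suitable $u$ exists only because of your dimension count. Finally, $\mc{A}_{\mathbf r}$ is finite-dimensional because it is spanned by the finitely many words of degree $\mathbf r$, not because its functions are constructible.
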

\begin{proof}
By \cite[Corollary~6.5]{GLSIV}, $S_{21}\in\mc{N}_{\mr{sp}}$, whereas $S_{21}T_1T_3T_2T_3=F\in\mc{I}$ and $\rho_Z(F)=-2$. Thus $\mc{N}_{\mr{sp}}$ is not a right ideal and $\mc{I}_{\mathbf r}\nsubseteq\mc{N}_{\mr{sp},\mathbf r}$.

By \cite[Theorem~7.3]{GLSIV}, the generic-evaluation matrix of convolution words has full row rank over $\C$. Its entries are integers, so it has the same rank over $\Q$. Together with \eqref{eq:published} and Theorem~\ref{thm:canonical-quotient}, this gives
\[ \op{codim}_{\mc{A}_{\mathbf s}}\mc{I}_{\mathbf s}
 =\dim_\Q U^+_{\Q,\mathbf s}
 =|\Bcal_{d_*}(\mathbf s)|
 =\op{codim}_{\mc{A}_{\mathbf s}}\mc{N}_{\mr{sp},\mathbf s}. \]
The two subspaces in degree $\mathbf r$ are distinct and have equal codimension, so neither contains the other. Adding an element of $\mc{N}_{\mr{sp},\mathbf r}\setminus\mc{I}_{\mathbf r}$ to a normalized lift preserves its generic values but changes its quotient class.
\end{proof}

\subsection{The ramified lift and monodromy}\label{sec:affine-monodromy}
Put $R=\OO[x]/(x^2-t)$. On vertex lattices $\OO^2,R^3,\OO^2$, define the arrow actions by \eqref{eq:affine-module}, replacing $\varepsilon$ by $x$; specify $b$ on the $\OO$-basis $p,px,q,qx,r,rx$. Denote the resulting lattice by $\ms{M}$. The relation $x(ba-cd)+(ba-cd)x=0$ holds on $p,px$ by cancellation; all other relations and the filtration above persist. Thus $\ms{M}$ lifts $M$ and has nilpotent generic fibre.

Let $\mf{F}_w(\ms{M})$ be the relative flag scheme over $\OO$. Over an $\OO$-algebra $B$, its flags are locally direct summands over the vertex rings $B,R_B,B$, where $R_B=B[x]/(x^2-t)$, with locally free successive quotients of the ranks prescribed by $w$. Its generic fibre is the balanced flag scheme of $\ms{M}_K$.

\begin{proposition}\label{prop:affine-relative}
The schemes $\mf{F}_{w_+}(\ms{M})$ and $\mf{F}_{w_-}(\ms{M})$ are empty, and
\begin{equation}\label{eq:affine-relative}
 \mf{F}_{w_0}(\ms{M})\cong\op{Spec}\OO[\lambda,z]/(\lambda^2-t,\lambda z).
\end{equation}
Its reduced special fibre is $\A^1_\C$ and its generic fibre is $\op{Spec}K(\sqrt t)$.
\end{proposition}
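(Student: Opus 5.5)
The plan is to repeat the special-fibre argument of Theorem~\ref{thm:affine-support} over an arbitrary $\OO$-algebra $B$, with $\varepsilon$ replaced by $x$, and to keep track of which steps used $\varepsilon^2=0$. For $w_-$, I will show that $a$ is injective with locally split image on $\ms{M}_1\otimes B$. Then a flag whose first quotient is supported at $1$ would give a rank-one summand of $B^2$ killed by $a$, which is impossible. For $w_+$, the first two factors force the middle submodule to be the kernel of $b$. Since $b:R_B^3\to B^2$ is surjective ($p\mapsto u_1$, $px\mapsto u_2$), this kernel is $qR_B\oplus rR_B$, a direct summand. The image of $d$ meets it trivially in the locally split sense, because $v_1d=p$ and $v_2d=r+px$ project to independent vectors modulo $\langle q,qx,r,rx\rangle$. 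Hence no vertex-$3$ line can map into it, and both schemes are empty over every $B$, so the $\OO$-schemes are empty.

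For $w_0$, I will parametrize the flags over $B$ as in the special fibre: $U$, $N$, $L_3$, and the first middle line $L$. Stability gives $qR_B\subseteq N$ and $Nb\subseteq U$. The key change is the identity $(vx)b=\nu(vb)$, where now $\nu(u_1)=u_2$ and $\nu(u_2)=tu_1$. I will need $Nb$ to be a rank-one summand stable under $\nu$ and contained in $U$. Writing $N=qR_B\oplus(r+\lambda p x+\mu p)R_B$ on a graph chart, $Nb$ is spanned by $\mu u_1+\lambda u_2$ and $\nu$ of it, $\lambda t u_1+\mu u_2$. This span has rank one exactly when $\mu^2=\lambda^2t$. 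The condition $L_3d\subseteq N$, applied to $v_2d=r+px$, forces $\mu=0$ and $\lambda=1$ up to the correct normalization. I expect this bookkeeping to be the main obstacle. One has to choose the chart coordinates so that the surviving parameter satisfies $\lambda^2=t$, rather than $\lambda=1$ as in the special fibre, and check that the other chart is empty. Concretely, I will redo the normalization with the primitive generator of $N$ modulo $qR_B$ written as $r+\lambda' px+\mu' p$. I will then impose that $v_2d$ or a combination of $v_1d,v_2d$ lies in $N$, together with rank-one-ness of $Nb$, and solve. My expectation is that the result is a single parameter $\lambda$ with $\lambda^2=t$ governing $U$ and $L_3$.

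Next, the first middle line $L=(q+zr x+\dots)R_B$ must be killed by $b$ and contain the $ua$-images. Over $B$, the lifted condition $u_2a=-qx$ lying in $L$ modulo the next step couples $z$ with $\lambda$, and I expect this to produce the relation $\lambda z=0$. I will verify that these coordinates determine the whole flag, that the successive quotients are locally free of the prescribed ranks, and that the inverse map is regular. This gives the isomorphism \eqref{eq:affine-relative} on a graph chart covering the scheme. Finally I will read off the fibres. Modulo $t$ the ring is $\C[\lambda,z]/(\lambda^2,\lambda z)$, whose reduction is $\C[z]$. After inverting $t$, $\lambda$ is a unit, so $z=0$ and the ring is $K[\lambda]/(\lambda^2-t)=K(\sqrt t)$.
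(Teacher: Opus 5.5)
You handle $w_+$ and $w_-$ correctly. Your arguments work directly over every $\OO$-algebra $B$, whereas the paper only checks the two geometric fibres. Your reading-off of the fibres is also correct, and a chart computation over $B$ is the paper's strategy for $w_0$ too.

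For $w_0$, however, the one decisive step you actually state is wrong. The condition $L_3d\subseteq N$, ``applied to $v_2d=r+px$'', does not force $\mu=0$. That step presupposes $L_3=Bv_2$, which holds only on the reduced special fibre. With $\mu=0$, your condition $\mu^2=\lambda^2t$ becomes $t=0$, which kills the generic fibre, and no renormalization of the chart for $N$ repairs this.

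The square root of $t$ is exactly the $v_1$-coefficient of $L_3$. Write $L_3=B(\alpha v_1+\beta v_2)$ with $(\alpha,\beta)$ unimodular. Membership of $\alpha p+\beta(r+px)$ in $qR_B\oplus(r+\lambda px+\mu p)R_B$ gives $\alpha=\mu\beta$ and $\beta(\lambda-1)=0$. Hence $\beta$ is a unit, $\lambda=1$, $L_3=B(\mu v_1+v_2)$ and $U=B(\mu u_1+u_2)$, and your rank-one condition becomes $\mu^2=t$. The surviving parameter is therefore your $\mu$, not your $\lambda$.

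The paper reaches this more cleanly by starting from $L_3$. At no geometric point is $L_3=Bv_1$, since then $pR_B\subseteq N$ and $Nb=M_1$. So the chart $L_3=B(\lambda v_1+v_2)$ covers the scheme, and $N=qR_B\oplus nR_B$ with $n=r+p(\lambda+x)$ is forced. The condition $(nx)b=tu_1+\lambda u_2\in U=B(\lambda u_1+u_2)$ is then exactly $\lambda^2=t$. In your ordering you would still have to prove that the $r$-graph chart for $N/qR_B$ covers the scheme, including at the split generic geometric point.

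Your step for $L$ is likewise only an expectation, and two things are missing.

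First, you need to show that the graph chart $L=(q+n(h_0+xh_1))R_B$ covers the scheme. On the generic fibre this holds because $Ua\subseteq L$ puts the $q$-component of $L$ in the eigenspace $x=-\lambda$, and $Lb=0$ puts it in the eigenspace $x=\lambda$.

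Second, the stability condition over $B$ is $Ua\subseteq L$, i.e.\ $q(\lambda-x)\in L$, not ``$u_2a\in L$ modulo the next step''. Together with $Lb=0$ it gives $h_0+\lambda h_1=0$ and $\lambda h_1-h_0=0$; the remaining two equations then follow from $\lambda^2=t$. Hence $h_0=0$ and $\lambda h_1=0$, because $2$ is invertible. Setting $z=h_1$ gives $\OO[\lambda,z]/(\lambda^2-t,\lambda z)$.
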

\begin{proof}
The exclusions of $w_+,w_-$ hold in both geometric fibres. For $w_0$, use $L,U,N,L_3$ as above. The line $L_3$ cannot be spanned by $v_1$ at a geometric point: then $pR_B\subseteq N$ would force $Nb=M_1$. Thus the chart
$L_3=B(\lambda v_1+v_2)$ covers the scheme. Since $N$ contains $qR_B$ and $L_3d$, it equals
\[ N=qR_B\oplus nR_B,\qquad n=r+p(\lambda+x). \]
Now $nb=\lambda u_1+u_2$ forces $U=B(\lambda u_1+u_2)$, and $(nx)b=tu_1+\lambda u_2\in U$ is equivalent to $\lambda^2=t$.

The projection $L\to qR_B$ along $nR_B$ is an isomorphism at every geometric point. On the special fibre this follows from $L=L_z$. On the generic fibre, $Ua\subseteq L$ forces the $q$-line in the $x=-\lambda$ eigenspace, and $Lb=0$ forces it in the $x=\lambda$ eigenspace. Hence the graph chart covers the scheme, and we may write
$L=(q+n(h_0+xh_1))R_B$. The conditions $Lb=0$ and $Ua\subseteq L$ give, respectively,
\[ \begin{aligned}
 h_0+\lambda h_1&=0,&\lambda h_0+th_1&=0,\\
 \lambda h_0-th_1&=0,&\lambda h_1-h_0&=0.
\end{aligned} \]
Together with $\lambda^2=t$, these are equivalent to $h_0=0$ and $\lambda h_1=0$, since $2$ is invertible. Setting $z=h_1$ gives regular coordinates $\lambda,z$ that reconstruct each flag over $B$, proving the scheme isomorphism \eqref{eq:affine-relative}.
\end{proof}

The relative scheme is not flat: $tz=0$ although $z\ne0$. Its special fibre is $\op{Spec}\C[\lambda,z]/(\lambda^2,\lambda z)$; the reduction is $\A^1$ and the nilradical $(\lambda)$ is supported at the origin. Its Euler characteristic is $1$. The two geometric generic points are exchanged by tame inertia; their cohomology is the induced representation $H^0\cong\op{Ind}_{G_{K(\sqrt t)}}^{G_K}\Q_\ell$,
whose monodromy trace is zero. Thus $\ev_\sigma(f)(\ms{M}_K)=0$, although the geometric generic-fibre Euler characteristic is $2$.

To identify the component-generic values, put $\ms{M}_k=\ms{M}/t^k\ms{M}$. Solving the arrow-intertwining equations over $\OO/(t^k)$, without inverting $t$, gives
\[ Q_1=Q_3=\begin{pmatrix}\alpha&t\beta\\\beta&\alpha\end{pmatrix},\qquad P=\begin{pmatrix}\alpha+\beta x&0&0\\0&\alpha-\beta x&0\\\beta&0&\alpha-\beta x\end{pmatrix},\qquad \alpha,\beta\in\OO/t^k. \]
Hence $\dim_\C\End_{\Pi_k}(\ms{M}_k)=2k$ and $\dim G_k(\mathbf r)=26k$. The orbit closures $Z_k$ are maximal of dimension $24k$, and correspond to $Z$ under reduction. Applying a vertex-basis change with algebraically independent power-series coefficients places every reduction at the generic point of its dense orbit without changing the flag schemes. Therefore
\[ \lambda_Z^{\mr{sp}}(f)=-2\ \text{ but }\ \lambda_Z^{\mon}(f)=0, \]
proving Theorem~\ref{intro:counterexample}.

For $k\ge2$, the three ordinary flag varieties of $\ms{M}_k$ are empty: the middle one would require $\lambda^2=t$ in $\C[t]/(t^k)$, impossible by comparing constant and linear coefficients. Thus ordinary generic values can change under common scaling of the symmetrizer, while the canonical Serre quotient remains $U^+_\Q$.  The monodromy functional, unlike the special-fibre one, descends to this quotient.

\appendix

\section{The acyclic realization}\label{sec:acyclic}
We return to arbitrary $(C,d)$ and the acyclic orientation $\Omega$ fixed in Section~\ref{sec:setup}. We give a proof of the acyclic realization that does not use the component charts. Let $H=H(C,D,\Omega)^{\mr{op}}$, and let $\mc{M}_\Q(H)$ be the rational convolution algebra generated by the characteristic functions $\theta_i=1_{E_i}$ of its locally free vertex-supported modules. Thus $\mc{M}_\Q(H)$ is the rational form of the algebra in \cite{GLSIII}. Let $\Lambda_d$ be the quotient of $\Gamma_d$ obtained by setting the arrows opposite to $\Omega$ equal to zero. We write $\mc{M}_\sigma(\Lambda_d)$ for the rational span of its balanced flag-trace functions, as for $\Gamma_d$.

\begin{theorem}\label{thm:acyclic}
The assignments $e_i\mapsto\theta_i$ and $e_i\mapsto F_i$ give Hopf algebra isomorphisms
\[ U^+_\Q\xrightarrow{\sim}\mc{M}_\Q(H),\qquad U^+_\Q\xrightarrow{\sim}\mc{M}_\sigma(\Lambda_d). \]
\end{theorem}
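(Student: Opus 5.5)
The proof has two parts: the Serre relations together with the coproduct, and then faithfulness. For the Serre relations in $\mc{M}_\sigma(\Lambda_d)$, note that $\Lambda_d$ is a quotient of $\Gamma_d$. Inflation preserves submodules, balanced flags and direct sums, so every trace function on $\Lambda_d$-modules is the restriction of the corresponding function on nilpotent $\Gamma_d$-modules. Corollary~\ref{thm:Serre} and Proposition~\ref{prop:coproduct} then show that $e_i\mapsto F_i$ gives a surjective homomorphism of Hopf algebras $U^+_\Q\to\mc{M}_\sigma(\Lambda_d)$. For $\mc{M}_\Q(H)$ the Serre relations are \cite[Proposition~4.9]{GLSIII} and the coproduct is the direct-sum formula of \cite[Section~4.3]{GLSIII}, so again we have a surjective Hopf homomorphism. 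Both algebras are graded connected and cocommutative and are generated by primitive elements. By Milnor--Moore, each is the enveloping algebra of its primitive Lie algebra, which is a graded quotient of $\mf{n}(C)$. It therefore suffices to prove injectivity, and for this it is enough to find, in each degree $\mathbf r$, at least $\dim U^+_{\Q,\mathbf r}$ linearly independent evaluation functionals.

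For faithfulness I would use an auxiliary acyclic quiver and Ringel--Hall type arguments instead of the component charts. I would pass to the split vertex fields over $K^{(L)}$, using the splitting of Section~\ref{sec:setup}. There $\Lambda_d$ becomes the path algebra of an acyclic quiver $\widetilde Q$ with vertices $(i,\xi)$, and the Galois group permutes the sheets cyclically. The species analogue of \cite[Section~4]{GLSRS} identifies $K$-rational $\Lambda_d$-modules with $\sigma$-stable $\widetilde Q$-representations. Monodromy traces on those modules are then traces on the flag cohomology of the $\widetilde Q$-representation, with the induced contributions removed as in Lemma~\ref{lem:necklace}. Folding theory says that the $\sigma$-fixed part of the enveloping algebra of the Kac--Moody algebra of $\widetilde Q$ recovers $U^+_\Q$ for $C$. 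I would then pick, for each PBW monomial in real and imaginary root vectors, a rigid or generic $\Lambda_d$-module: indecomposable lattices for real roots, and for imaginary roots the generic fibres of regular families. The value to compute is the evaluation of the corresponding product of divided powers, ordered by a filtration by $i$-tops, in analogy with Lemma~\ref{lem:triangular}. Choosing a source-to-sink ordering makes the matrix unitriangular.

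To transfer the result to $H=H(C,D,\Omega)^{\mr{op}}$, I would construct an integral model $\mc{H}$ of $\Lambda_d$ over $\OO$ with special fibre $H$, namely $\PP_d$ with the opposite arrows killed. Locally free $H$-modules lift to lattices, since $\op{Ext}^2$ vanishes for $H$. The same top-removal argument shows that the special-fibre Euler characteristic of a lift matches the triangular pattern on the chosen lifts: the diagonal flag schemes are points in both fibres, and the vanishing entries come from rank obstructions that hold in both fibres. This gives injectivity for $\mc{M}_\Q(H)$ directly, since triangularity needs only emptiness and point-counts and not equality of all values. The identification of primitive Lie algebras in \cite[Conjecture~7.2]{GLSIII} then follows from Milnor--Moore.

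I expect the main obstacle to be the imaginary-root part of the dimension count in the non-Dynkin case. Rigid modules do not suffice there, and a unitriangular evaluation matrix requires generic modules whose top-removal sequences separate all the PBW labels of a given degree. My first attempt would be to avoid explicit PBW labels. I would instead run the maximal-top recursion of Section~\ref{sec:basis} inside the representation varieties of the hereditary algebra: these varieties are smooth and irreducible, so the components of the $E$-filtered locus for $\Pi_1$ can be replaced by Lusztig-type strata indexed by top sequences. The count of these strata would then be matched with $\dim U^+_{\Q,\mathbf r}$ using Kac's theorem for the species, that is, the known count of absolutely indecomposables via folding.
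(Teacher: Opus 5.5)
Your reduction to injectivity is sound, and it matches the paper. The Serre relations and the coproduct come from Corollary~\ref{thm:Serre}, Proposition~\ref{prop:coproduct} and \cite{GLSIII}, which give surjective Hopf maps. The gap is faithfulness, which is the whole content of the theorem beyond \cite[Corollary~4.10]{GLSIII}, and the mechanism you propose cannot supply it. A unitriangular matrix indexed by top-removal sequences exists for $\Pi_1$ only because of the GLS count \eqref{eq:published} for the Lagrangian components. A smooth irreducible hereditary representation variety has far too few top sequences. Take $C$ of type $A_1^{(1)}$ with $d=(1,1)$, so that $H$ is the Kronecker algebra with arrows $a,b:1\to2$. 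In degree $(2,2)$ the Serre relations, which live in degrees $(3,1)$ and $(1,3)$, impose nothing, so $\dim U^+_{\Q,(2,2)}=\binom42=6$. Yet for every vertex order, the full-top removals of a module of rank $(2,2)$ are determined by $s=\dim(\op{im}a+\op{im}b)\in\{0,1,2\}$. Removing at $2$ gives rank $2-s$, and removing at $1$ always gives rank $2$. So at most three adapted monomials occur, and no choice of generic modules on such strata yields six independent functionals. Your assignment of a rigid or generic module to each imaginary PBW monomial is likewise never constructed, and no triangularity principle is available for it.

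The folding step is also misidentified. After splitting over $K^{(L)}$, a vertex above $j$ has $-c_{ji}$ neighbours above $i$. So for your split quiver the elements $\sum_{u\in\pi^{-1}(i)}\widetilde e_u$ satisfy the Serre relations of $C^{\mathsf T}$, not of $C$. For the datum of Section~\ref{sec:b2-example}, the split quiver is $A_3$, and the folded algebra has the root $2\alpha_1+\alpha_2$ where $C$ has $\alpha_1+2\alpha_2$.

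The missing idea is to avoid counting altogether and import faithfulness from Schofield's realization for an ordinary quiver. The paper uses the dual cover $Q^{\mr c}$, with $L/d_i$ vertices above $i$, for which $\sum_{u\in I^{\mr c}_i}\widetilde c_{uv}=c_{ij}$. It proves that $e_i\mapsto\sum_{u\in I^{\mr c}_i}\widetilde e_u$ embeds $\mf n(C)$ in $\mf n(\widetilde C)$ (Lemma~\ref{lem:Lie-injection}). To each $Q^{\mr c}$-representation $V$ it attaches the lattice $\ms N(V)$, whose summands are indexed by $0\le r<L$. A $\Gm$-localization on the special fibre gives \eqref{eq:acyclic-detection}, and the Lefschetz formula for the finite-order automorphism $\tau$ induced by $\sigma$ on the generic fibre gives \eqref{eq:acyclic-trace}. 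Both say that every word $\mathbf i$ evaluates to $\sum_{\pi(\mathbf v)=\mathbf i}\chi_c\Fl_{\mathbf v}(V)$. Hence a word polynomial that vanishes on all $H$-modules, or on all flag traces of $\Lambda_d$-modules, maps to zero in $U(\mf n(\widetilde C))$, and so lies in $J_C$. No basis or dimension count is needed.
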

The first map is surjective by \cite[Corollary~4.10]{GLSIII}; its injectivity is \cite[Conjecture~7.2]{GLSIII}. We prove both injectivity statements by comparing flag evaluations with those for an ordinary acyclic quiver.

\subsection{An auxiliary quiver and its Lie algebra}
Put $L=\lcm_i d_i$, $w_i=L/d_i$, and $h_{ij}=L/e_{ij}=\lcm(w_i,w_j)$ for adjacent $i,j$. Define a quiver $Q^{\mr{c}}$ with vertex set
\[ I^{\mr{c}}=\{(i,p):i\in I,\ 0\le p<w_i\},\qquad \pi:I^{\mr{c}}\to I,\quad\pi(i,p)=i. \]
Write $I_i^{\mr{c}}=\pi^{-1}(i)$. For every arrow $a=a_{ji}^{(c)}:j\to i$ that survives in the acyclic quotient, and every $0\le r<h_{ij}$, put an arrow
$a_r:(j,r\bmod w_j)\longrightarrow(i,r\bmod w_i)$.
This is done separately for each $1\le c\le g_{ij}$, so all parallel arrows are included. A directed cycle would project to one in $\Omega$; hence $Q^{\mr{c}}$ is acyclic.

Let $\widetilde C=(\widetilde c_{uv})_{u,v\in I^{\mr{c}}}$ be its symmetric Cartan matrix: the diagonal entries are $2$, and for $u\ne v$, $-\widetilde c_{uv}$ is the number of arrows joining $u$ and $v$. There are no arrows between vertices in a single $I_i^{\mr{c}}$, and
\begin{equation}\label{eq:cover-column}
 \sum_{u\in I_i^{\mr{c}}}\widetilde c_{uv}=c_{ij}\qquad(v\in I_j^{\mr{c}}).
\end{equation}
For $i\ne j$, a fixed vertex above $j$ is incident with $h_{ij}/w_j=f_{ij}$ arrows above each original arrow. Multiplication by $g_{ij}$ gives $-c_{ij}$. For $i=j$, the sum is $2$. The fibre sizes here are $L/d_i$, rather than the $d_i$ vertices obtained by splitting the generic species in Section~\ref{sec:setup}.

Denote the Chevalley generators and simple coroots of $\mf{g}(\widetilde C)$ by $\widetilde e_u,\widetilde f_u,\widetilde h_u$. Put
\[ E_i'=\sum_{u\in I_i^{\mr{c}}}\widetilde e_u,\qquad F_i'=\sum_{u\in I_i^{\mr{c}}}\widetilde f_u,\qquad H_i'=\sum_{u\in I_i^{\mr{c}}}\widetilde h_u. \]
\begin{lemma}\label{lem:Lie-injection}
The assignment $e_i\mapsto E_i'$ gives an injection $\mf{n}(C)\hookrightarrow\mf{n}(\widetilde C)$, and therefore an injection of their enveloping algebras.
\end{lemma}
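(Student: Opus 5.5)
We realize $\mf{n}(C)$ inside $\mf{g}(\widetilde C)$ as a folding by a cyclic group of diagram automorphisms. The plan has three steps: first check that $e_i\mapsto E_i'$ respects the Serre relations, then use a symmetry of $Q^{\mr c}$ to identify the subalgebra generated by the $E_i'$ with an orbit Lie algebra, and finally compare with $\mf{n}(C)$.

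For the Serre relations, fix $i\ne j$ and $m=1-c_{ij}$. Within $I_i^{\mr c}$ the $\widetilde e_u$ pairwise commute, because there are no arrows inside a fibre. Hence
\[ (\op{ad}E_i')^m\widetilde e_v=\sum m!/(\prod_u n_u!)\,\prod_u(\op{ad}\widetilde e_u)^{n_u}\widetilde e_v, \]
where the sum runs over compositions $(n_u)$ of $m$. A term with some $n_u>-\widetilde c_{uv}$ vanishes by the Serre relation in $\mf{n}(\widetilde C)$. The remaining terms would need $\sum_u n_u\le\sum_u(-\widetilde c_{uv})=-c_{ij}=m-1$ by \eqref{eq:cover-column}, which is impossible. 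Summing over $v\in I_j^{\mr c}$ shows that the assignment defines a Lie algebra homomorphism $\phi:\mf{n}(C)\to\mf{n}(\widetilde C)$.

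For injectivity, we use the automorphism $\tau:(i,p)\mapsto(i,p+1\bmod w_i)$ together with $a_r\mapsto a_{r+1\bmod h_{ij}}$. This is a diagram automorphism of $Q^{\mr c}$ of order $L$ whose orbits are exactly the fibres $I_i^{\mr c}$. Because there are no edges inside a fibre, the automorphism is admissible in the sense of Fuchs--Schweigert--Schellekens and Naito. By the orbit Lie algebra theory, $E_i',F_i',H_i'$ generate a subalgebra of the fixed-point algebra. On this subalgebra the Cartan data are
\[ [H_i',E_j']=\Bigl(\sum_{u\in I_i^{\mr c}}\widetilde c_{uv}\Bigr)E_j'=c_{ij}E_j',\qquad [E_i',F_j']=\delta_{ij}H_i'. \]
Here one checks that $[H_i',\widetilde e_v]$ is the same for every $v\in I_j^{\mr c}$, again by \eqref{eq:cover-column}.

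It remains to show that the subalgebra $\mf{g}'$ generated by these elements is a quotient of $\mf{g}(C)$ by an ideal meeting $\mf{n}(C)$ trivially. Since $C$ is symmetrizable, $\mf{g}(C)$ modulo its center (or modulo the degenerate part of the Cartan) is the Kac--Moody algebra itself. By Gabber--Kac, every nonzero graded ideal of $\mf{g}(C)$ that intersects the Cartan subalgebra trivially is zero. Suppose $x\in\ker\phi\cap\mf{n}(C)$ is homogeneous and nonzero. Then there are $f$'s with $[f_{i_1},[\dots,[f_{i_s},x]]]$ a nonzero multiple of some $h$. In $\mf{g}'$ the images $H_i'$ are linearly independent, since they are sums over the disjoint fibres of the linearly independent coroots $\widetilde h_u$. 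This contradicts $\phi(x)=0$. I would carry out this step by extending $\phi$ to a homomorphism $\widetilde{\mf g}(C)\to\mf{g}(\widetilde C)$ of the Kac--Moody algebras without Serre relations. Its kernel is a graded ideal $\mf{a}$ meeting the Cartan part trivially, because the $H_i'$ are independent. Hence $\mf{a}$ lies in the maximal such ideal, which for symmetrizable $C$ is generated by the Serre relations (Gabber--Kac). Restricting to the positive part gives injectivity of $\mf n(C)\to\mf n(\widetilde C)$, and PBW gives injectivity on enveloping algebras.

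\textbf{Main obstacle.} The main technical point is the Gabber--Kac input. One has to make sure that the extended map is well defined on the realization of $C$, including the extra Cartan elements when $C$ is singular. Working only with the derived algebra generated by $e_i,f_i,h_i$ avoids this: there the maximal ideal meeting $\op{span}(h_i)$ trivially still coincides with the Serre ideal on $\widetilde{\mf n}_+$.
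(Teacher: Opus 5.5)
Your proof is correct and follows the same plan as the paper. You check the Serre relations by pigeonhole, using that the operators $\op{ad}\widetilde e_u$, $u\in I_i^{\mr c}$, commute. You then extend to a homomorphism of Kac--Moody algebras that is injective on the Cartan part. Finally, you use the maximal-ideal property of $\mf g(C)$ together with the Gabber--Kac presentation.

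The real difference is how you treat singular $C$. The paper keeps a minimal realization $\mf h_C$ and extends $h_i\mapsto H_i'$ to $\psi:\mf h_C\to\mf h_{\widetilde C}$ with $\widetilde\alpha_v\circ\psi=\alpha_{\pi(v)}$. It proves $\psi$ injective because the common kernel of the simple roots lies in the coroot span. This gives a map on all of $\mf g(C)$, to which ``no nonzero ideal meets $\mf h$ trivially'' applies directly. You work instead with the algebra presented by $e_i,f_i,h_i$ and the Chevalley relations. There you need only the independence of the $H_i'$, which holds because their supports are disjoint, and you avoid constructing $\psi$. The price is a derived version of the structure theory: the maximal graded ideal of that algebra meeting $\op{span}(h_i)$ trivially must have positive part $\mf r_+$. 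This is true. The derived algebra $\mf g'(C)$ contains $\mf n_\pm(C)$, so Kac's argument that a positive element killed by every $f_i$ vanishes still shows that $\mf g'(C)$ has no nonzero graded ideal meeting $\op{span}(h_i)$ trivially.

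Two pieces of bookkeeping should be fixed. First, the absence of nonzero graded ideals meeting the Cartan trivially is the defining property of $\mf g(C)$, not the Gabber--Kac theorem. Maximality gives only that the kernel of $\widetilde{\mf n}_+\to\mf n(\widetilde C)$ lies in $\mf r_+$. Gabber--Kac supplies the reverse inclusion, because it says $\mf r_+$ is generated by the Serre elements that your first step kills. Second, drop the orbit-Lie-algebra paragraph. Nothing in your final argument uses it. Moreover, orbit Lie algebras in the sense of Fuchs--Schweigert--Schellekens and Naito are in general not the subalgebra generated by $E_i',F_i',H_i'$: folding $A_{2n-1}$ gives $C_n$, whereas its orbit Lie algebra is $B_n$.
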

\begin{proof}
Equation~\eqref{eq:cover-column} gives the required Cartan and Chevalley commutators for $E_i',F_i',H_i'$. The operators $\op{ad}\widetilde e_u$, for $u\in I_i^{\mr{c}}$, commute. Fix $i\ne j$ and $v\in I_j^{\mr{c}}$. In the expansion of $(\op{ad}E_i')^{1-c_{ij}}\widetilde e_v$, a term has total degree
$1-c_{ij}=1+\sum_{u\in I_i^{\mr{c}}}(-\widetilde c_{uv})$.
At least one factor $\op{ad}\widetilde e_u$ therefore occurs more than $-\widetilde c_{uv}$ times. Since the operators commute, the corresponding Serre relation makes the term zero. This proves the positive Serre identities; the negative identities follow in the same way.

To obtain injectivity also when $C$ is singular, use minimal Cartan realizations $\mf{h}_C$ and $\mf{h}_{\widetilde C}$ \cite[Chapters~1 and~9]{Kac}. In such a realization the simple roots and simple coroots are each linearly independent, and $\dim\mf{h}_C=2|I|-\op{rank}C$. Here we also regard each $\alpha_j$ as the corresponding linear form on $\mf{h}_C$, and write $\widetilde\alpha_v$ for the target simple roots. Extend $h_i\mapsto H_i'$ to a linear map $\psi:\mf{h}_C\to\mf{h}_{\widetilde C}$ satisfying
$\widetilde\alpha_v(\psi(h))=\alpha_{\pi(v)}(h)$ where $h\in\mf{h}_C$ and $v\in I^{\mr{c}}$.
These conditions agree on the coroot span by \eqref{eq:cover-column}. They can be prescribed on a complementary basis because the target simple roots are independent.

If $\psi(h)=0$, then every $\alpha_j(h)=0$. In a minimal realization, this common kernel is contained in the simple-coroot span: both spaces of solutions, in $\mf{h}_C$ and in that span, have dimension $|I|-\op{rank}C$. The vectors $H_i'$ are independent, since their supports in the independent target coroots are disjoint. Hence $h=0$.

The Chevalley--Serre presentation now gives a homomorphism $\mf{g}(C)\to\mf{g}(\widetilde C)$ injective on the Cartan subalgebra. By the defining maximal-ideal property of a contragredient Lie algebra, its kernel is zero: no nonzero ideal has zero intersection with the Cartan subalgebra \cite[Chapters~1 and~9]{Kac}. Restriction to the positive parts proves the first assertion. The Poincar\'e--Birkhoff--Witt theorem gives the assertion about enveloping algebras.
\end{proof}

\subsection{Integral representations and ordinary flag evaluations}
Let $V$ be a finite-dimensional representation of $Q^{\mr{c}}$ over $\C$. Write $V_{i,p}$ for its vertex spaces and $V(a_r)$ for its arrow maps. Define
\[ \ms{N}(V)_i=\bigoplus_{p=0}^{w_i-1}V_{i,p}\otimes_\C\OO_i. \]
Its rank over $\OO_i$ is $\sum_p\dim_\C V_{i,p}$. Decompose each vertex lattice into the $\OO$-summands $V_{i,p}\otimes x_i^b\OO$, indexed by
\[ r=p+bw_i,\qquad 0\le p<w_i,\quad 0\le b<d_i,\quad 0\le r<L. \]
Multiplication by $x_i$ increases $r$ by $w_i$, with a factor $t$ whenever the index passes $L-1$. For a surviving arrow $a:j\to i$, its action from the summand indexed by $r$ at $j$ to the summand with the same index at $i$ is $V(a_{r\bmod h_{ij}})$, extended $\OO$-linearly. All opposite arrows act as zero.

Both sides of the edge relation \eqref{eq:comm} increase the index by $h_{ij}$ and have the same factor of $t$ if the index passes $L-1$. The arrow matrix is unchanged by this increase, because its index is taken modulo $h_{ij}$. The moment relations hold because the opposite arrows are zero. Thus these formulas define a locally free $\PP_d$-lattice, with nilpotent generic fibre by acyclicity. Put
\[ \Phi(V)=\ms{N}(V)/t\ms{N}(V). \]

Fix a word $\mathbf i=(i_1,\ldots,i_h)$ whose rank equals that of $\ms{N}(V)$. Give the summand indexed by $r$ weight $r$ for an action of $\Gm$. This action commutes with the arrow maps and conjugates each loop map to a nonzero scalar multiple of itself, so it preserves the locally free flag varieties of $\Phi(V)$. A submodule fixed by this action is a direct sum of its weight subspaces. At vertex $i$, loop stability describes those subspaces by chains
$U_{p,0}\subseteq\cdots\subseteq U_{p,d_i-1}\subseteq V_{i,p}\quad(0\le p<w_i)$.
If this submodule is free over $H_{i,1}$, its dimension is $d_i$ times the dimension of the kernel of $x_i$ on it. The latter kernel is the sum of the last terms of the chains. Therefore
\[ \sum_{p,b}\dim U_{p,b}=d_i\sum_p\dim U_{p,d_i-1}. \]
Every inclusion in each chain must be an equality. Conversely, constant chains give free submodules with free quotients. Their arrow conditions are precisely the subrepresentation conditions for $V$.

Let $\Fl_{\mathbf v}(V)$ denote the ordinary representation flag variety with one-dimensional simple successive quotients at the vertices listed in the word $\mathbf v$ of $I^{\mr{c}}$. Extend $\pi$ to words letter by letter. Each successive quotient $E_i$ in a fixed flag of $\Phi(V)$ chooses one vertex in $I_i^{\mr{c}}$. Thus its fixed locus is the disjoint union of the $\Fl_{\mathbf v}(V)$ with $\pi(\mathbf v)=\mathbf i$.

Stratify the nonfixed locus into invariant locally closed pieces admitting geometric quotients. Their fibres are $\Gm/\mu_n$, with Euler characteristic zero, so Euler integration gives equality with the Euler characteristic of the fixed locus. We obtain
\begin{equation}\label{eq:acyclic-detection}
 \chi_c\Fl^{\lf}_{\mathbf i}(\Phi(V))=\sum_{\pi(\mathbf v)=\mathbf i}\chi_c\Fl_{\mathbf v}(V).
\end{equation}

\subsection{Generic flag traces and faithfulness}
The matrices of $\ms{N}(V)$ are defined over $\C[t]$. After the base change $t=u^L$, multiply the basis vectors in the summand indexed by $r$ by $u^{-r}$. The loop matrix becomes $u^{w_i}$ times its value at $t=1$, and each arrow matrix becomes its value at $t=1$. These nonzero scalar factors do not change invariant subspaces.

Let $\mc{F}_{\mathbf i}(V)$ be the complex projective variety of flags stable under the matrices at $t=1$ and satisfying the balanced dimension conditions. The preceding change of basis identifies the generic flag scheme, over $\C((u))$, with the scalar extension of $\mc{F}_{\mathbf i}(V)$. Write $\sigma(u)=\zeta u$ for the chosen tame generator; then $\zeta\in\C$ is a primitive $L$th root of unity. In these coordinates its action on the constant flag variety is induced by $\tau(v)=\zeta^{-r}v$ (on the summand indexed by $r$).
The distinct eigenvalues of $\tau$ force a fixed flag to be homogeneous in $r$. The loop at $t=1$ is invertible and cyclically permutes the summands with a fixed residue modulo $w_i$. Its invariant flags therefore have equal subspaces in all those summands. They are balanced and correspond exactly to the ordinary flags on the right side of \eqref{eq:acyclic-detection}.

For a finite-order automorphism of a complex projective variety, an equivariant triangulation gives the Lefschetz formula: after subdivision, only pointwise fixed simplices contribute to the alternating trace. Hence this trace is the Euler characteristic of the fixed locus. Applying this formula to $\tau$ and using comparison with \'etale cohomology gives
\begin{equation}\label{eq:acyclic-trace}
 F_{\mathbf i}(\ms{N}(V)_K)=\sum_{\pi(\mathbf v)=\mathbf i}\chi_c\Fl_{\mathbf v}(V).
\end{equation}

\begin{proof}[Proof of Theorem~\ref{thm:acyclic}]
Let $\widetilde\theta_v$ be the characteristic function of the simple representation at $v\in I^{\mr{c}}$ in the ordinary-quiver convolution algebra. Schofield's ordinary-quiver realization, recalled in \cite[Introduction and Section~7]{GLSIII}, identifies the composition algebra with $U_\Q(\mf n(\widetilde C))$.  By Lemma~\ref{lem:Lie-injection}, the algebra homomorphism from $T_\Q$ to the ordinary-quiver composition algebra, sending $z_i$ to $\sum_{v\in I_i^{\mr{c}}}\widetilde\theta_v$, has kernel exactly $J_C$.

If a word polynomial evaluates to zero on every $H$-module, it evaluates to zero on every $\Phi(V)$. Equation~\eqref{eq:acyclic-detection} shows that its image under this homomorphism is zero, so it belongs to $J_C$. The opposite inclusion is the acyclic Serre identity of \cite[Proposition~4.9]{GLSIII}. This proves the first isomorphism.

For $\Lambda_d$, \eqref{eq:acyclic-trace} gives the same reverse inclusion of kernels. The forward inclusion is Corollary~\ref{thm:Serre}, applied to modules with all opposite arrows zero. Thus both evaluation kernels are $J_C$. The direct-sum flag identities identify the coproducts with the usual coproduct on $U^+_\Q$, proving both Hopf isomorphisms. Since the same generic modules are $\Gamma_d$-modules, they also prove faithfulness for $\Gamma_d$ independently of the component construction.
\end{proof}

The ordinary isomorphism proved here is used in the proof of Theorem~\ref{thm:canonical-quotient}, which identifies restriction to $H$ with the canonical Serre quotient of preprojective convolution.

\section*{Declaration of AI-assisted research and manuscript preparation}
The author used ChatGPT 6.0 during the development of this
work to explore examples, suggest and examine possible proof strategies,
assist with computational checks, and improve the exposition and \LaTeX{}
presentation. The author takes sole responsibility for all mathematical
claims, proofs, references, and the final content of the manuscript.

\end{document}